\documentclass[letterpaper, 11pt, thm-restate]{article}
\usepackage{amsthm}
\usepackage{amsmath}
\usepackage{amssymb}
\usepackage{amsfonts}
\usepackage{mathtools}
\usepackage{array}
\usepackage{xcolor}
\usepackage{leftindex}
\usepackage{thm-restate}
\usepackage{changepage}
\usepackage{caption}
\usepackage[margin=1in]{geometry}
\usepackage{enumitem}
\usepackage[ruled,noend,noline]{algorithm2e}
\usepackage[colorlinks = true]{hyperref}
\hypersetup{
    linkcolor=black,
    citecolor=blue,
    urlcolor=blue}
\hypersetup{hypertexnames=false}
    
\usepackage{titlesec}
\titleformat{\subsection}[runin]
       {\normalfont\bfseries}
       {\thesubsection}
       {0.5em}
       {}
       [.]
\makeatletter
\DeclareFontFamily{U}{mathx}{\hyphenchar\font45}
\DeclareFontShape{U}{mathx}{m}{n}{
      <5> <6> <7> <8> <9> <10>
      <10.95> <12> <14.4> <17.28> <20.74> <24.88>
      mathx10
      }{}
\DeclareSymbolFont{mathx}{U}{mathx}{m}{n}
\DeclareMathAccent{\widecheck}{0}{mathx}{"71}
\makeatother

\numberwithin{figure}{section}
\newtheorem{theorem}{Theorem}[section]
\newtheorem{lemma}[theorem]{Lemma}
\newtheorem{proposition}[theorem]{Proposition}
\newtheorem{corollary}[theorem]{Corollary}
\theoremstyle{definition}
\newtheorem{definition}[theorem]{Definition}
\theoremstyle{definition}

\theoremstyle{definition}

\theoremstyle{definition}
\newtheorem{observation}[theorem]{Observation}

\newcommand{\Z}{\mathbb{Z}}

\newcommand{\N}{\mathbb{N}}

\newcommand{\K}{\mathbb{K}}

\newcommand{\gp}{\operatorname{gp}}
\newcommand{\sgn}{\operatorname{sgn}}

\newcommand{\GNn}{G^{(\mathbb{N}^n)}}
\newcommand{\GN}[1]{G^{(\mathbb{N}^{#1})}}
\newcommand{\GZn}{G^{(\mathbb{Z}^n)}}

\newcommand{\LT}{\operatorname{LT}}
\newcommand{\LC}{\operatorname{LC}}

\newcommand{\supp}{\operatorname{supp}}
\newcommand{\multideg}{\operatorname{multideg}}

\newcommand{\lex}{\operatorname{lex}}

\newcommand{\grevlex}{\operatorname{grevlex}}

\newcommand{\gen}[1]{\left\langle #1 \right\rangle}
\newcommand{\li}[1]{\leftindex^{#1}}

\begin{document}

\title{Standard bases for shift-stable groups and Subgroup Membership in wreath products}

\author{Ruiwen Dong\footnote{Magdalen College, University of Oxford, United Kingdom, email: ruiwen.dong@magd.ox.ac.uk}}

\date{}

\maketitle

\begin{abstract}
    We develop a notion of \textit{standard bases} for subgroups of the restricted direct product $G^{(\mathbb{N}^n)}$ that are stable under translation by $\mathbb{N}^n$, where $G$ is an arbitrary finite group.
    We construct an algorithm that computes standard bases for such subgroups and use them to solve several algorithmic problems, including membership, saturation, and variable elimination.
    Our approach is inspired by Buchberger's algorithm and the theory of Gr\"{o}bner bases for ideals in polynomial rings.
    Building on the standard bases and our solutions to the algorithmic problems above, we prove that Subgroup Membership is decidable in wreath products $G \wr \mathbb{Z}^n$ for finite $G$ and $n \in \mathbb{N}$.
\end{abstract}

%\thispagestyle{empty}
%\newpage
%\setcounter{page}{1}

\section{Introduction}

\subsection{Subgroup Membership and connections to algorithmic algebra}\label{subsec:intro}
The original motivation of this paper comes from computational group theory.
In particular, we are interested in the \emph{Subgroup Membership} problem: given elements $h_1, \ldots, h_k, h$ in a group $H$, decide whether $h$ is in the subgroup generated by $h_1, \ldots, h_k$.
The problem was first explicitly formulated by Mikhailova~\cite{mikhailova1958occurrence}, who showed its undecidability in the direct product of two free groups.
It has since been extensively studied, see~\cite{lohrey2024membership} for a comprehensive survey.
%It has been studied extensively, see~\cite{lohrey2024membership} for a comprehensive survey, and~\cite{babai1996multiplicative, friedl2016membership, potapov2017decidability, Gray2020, bishop2026et0l} for some examples of recent decidability results and related problems.

For a large class of infinite groups, notably solvable groups, membership problems share a deep connection with algorithmic problems in rings and algebras.
Romanovskii~\cite{romanovskii1974some} famously proved decidability of Subgroup Membership in \emph{metabelian groups} by reducing it to the \emph{Submodule Membership} problem over integer polynomial rings: a generalization of the well-known \emph{Ideal Membership} problem.
%A group $H$ is called metabelian if it admits an abelian normal subgroup $A$ such that $H/A$ is abelian.
Romanovskii's solution provided an effective version of \emph{Hilbert's basis theorem}, and from a modern viewpoint it can be summarized as computing a \emph{Gr\"{o}bner basis}.
%This decidability result was later extended to abelian-by-nilpotent groups~\cite{romanovskii1980occurrence}.

Recall that a group $H$ is called metabelian if it admits an abelian normal subgroup $A$ such that $H/A$ is abelian.
A key step of Romanovskii's algorithm is to embed metabelian groups as quotients of the \emph{wreath product} $G \wr \Z^n$, where $G$ is an abelian group.
%Wreath products serve as fundamental building blocks for more general groups.
%, serving as building blocks for groups with significant algebraic and geometric properties~\cite{magnus1939theorem, KaimanovichVershik1983}.
%They also have important applications in graphs, automata, and coding theory~\cite{krohn1965algebraic, Babai1995Automorphism, giudici1999completely}.
For groups $G$ and $F$, their (restricted) wreath product $G \wr F$ is the semidirect product $G^{(F)} \rtimes F$, where $G^{(F)}$ is the restricted direct product of $G$ indexed by $F$, on which $F$ acts by translation.
See Sections~\ref{subsec:intro_inv} and~\ref{subsec:intro_wreath} for exact definitions.

When $G$ is abelian, $G^{(F)}$ admits the natural structure of a module over the group ring $\Z[F]$.
For $F = \Z^n$, this reduces Subgroup Membership in $G \wr F$ to Submodule Membership over the Laurent polynomial ring $\Z[F] = \Z[X_1^{\pm}, \ldots, X_n^{\pm}]$, which is decidable by computing a Gr\"{o}bner basis~\cite{romanovskii1974some}.
This approach can be extended beyond the case of $F = \Z^n$.
Indeed, there is a substantial body of work on algorithms for $\Z[F]$-modules with non-commutative $F$, including when $F$ is nilpotent~\cite{romanovskii1980occurrence, BaumslagCannonitoMiller1981}, polycyclic~\cite{kandri1990non}, or free~\cite{heyworth2001one}: these are collectively known under the term \emph{non-commutative Gr\"{o}bner bases}.

When $G$ is non-abelian, the group $G^{(F)}$ loses its $\Z[F]$-module structure, as it no longer admits abelian addition.
In this case, Subgroup Membership in $G \wr F$ or $G \wr \Z^n$ falls outside the framework of the group--module correspondence.
This requires us to develop new tools beyond the established methods in algorithmic algebra.
The natural first step is to consider finite, non-abelian $G$.
This leads to the two main purposes of this paper:
\begin{enumerate}[wide, nosep, label=\arabic*.]
    \item We develop a notion of \emph{standard bases} for \emph{shift-stable subgroups} of $\GNn$, where $G$ is an arbitrary finite group. 
    %This is inspired by the theory of Gr\"{o}bner bases for ideals in polynomial rings.
    We construct an algorithm that computes standard bases for shift-stable subgroups, allowing us to solve essential problems such as membership, saturation, and elimination.
    
    \item Using the standard bases and our algorithm, we prove that Subgroup Membership is decidable in wreath products $G \wr \Z^n$, where $G$ is finite and $n \in \N$.
    This extends the special case of \mbox{$n=1$} resolved by Lohrey, Steinberg and Zetzsche~\cite{lohrey2015rational}.
    %, and answers the open problem~\cite[Problem~6.2.1]{potthast2020submonoid}.
    Groups of the form $G \wr \Z^n$ are often referred to as \emph{lamplighter groups} and are widely studied for their significant algorithmic and geometric properties~\cite{eskin2013coarse, DBLP:conf/icalp/FigeliusGLZ20, genevois2024asymptotic, dong2025lamplighter, bodart2025finite}.
    We point out that our decidability result cannot be extended to the more general \emph{Rational Subset Membership} problem, which Lohrey and Steinberg proved undecidable in $G \wr \Z^n$ for non-trivial $G$ and $n \geq 2$~\cite{lohrey2011tilings}.
\end{enumerate}

\subsection{Shift-stable subgroups of $\GNn$ and standard bases}\label{subsec:intro_inv}
We adopt the convention that $0 \in \N$.
Let $G$ be a group and denote by $e$ its neutral element.
For a map $f \colon \N^n \rightarrow G$, its \emph{support} is defined as the set
\[
\supp(f) \coloneqq \{\alpha \in \N^n \mid f(\alpha) \neq e\}.
\]
The \emph{restricted direct product} $\GNn$ is the set of all finitely supported maps $f \colon \N^n \rightarrow G$:
\[
\GNn \coloneqq \{f \colon \N^n \rightarrow G \mid \supp(f) \text{ is finite}\}.
\]
It forms a group under pointwise multiplication: for $f_1, f_2 \in \GNn$, their product is the map 
\[
f_1 f_2 \colon \N^n \rightarrow G, \; \alpha \mapsto f_1(\alpha) f_2(\alpha).
\]
The group $\GNn$ is equipped with a \emph{shift} action: for $\beta \in \N^n$ and $f \in \GNn$, let $\li{\beta} f$ denote the element of $\GNn$ defined by 
\[
\li{\beta} f(\alpha) = 
\begin{cases}
    f(\alpha - \beta) \quad & \text{ if } \alpha - \beta \in \N^n, \\
    e \quad & \text{ otherwise}.
\end{cases}
\]
Then $\beta \in \N^n$ induces a group homomorphism 
\[
\li{\beta}(\cdot) \colon \GNn \rightarrow \GNn,\quad f \mapsto \li{\beta}f.
\]

\begin{figure}[hb!]
    \captionsetup{belowskip=-7pt}
    \centering
    \begin{minipage}[t]{.27\textwidth}
        \centering
        \includegraphics[width=0.8\textwidth,height=0.8\textheight,keepaspectratio, trim={6.25cm 1.1cm 5.8cm 1cm},clip]{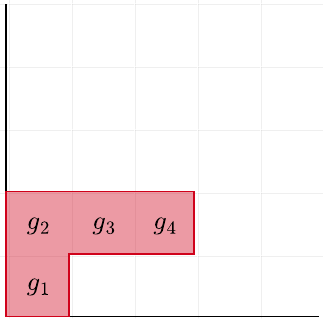}
        \caption{The tile $f_1$}
        \label{fig:visualf}
    \end{minipage}
    \hfill
    \begin{minipage}[t]{.27\textwidth}
        \centering
        \includegraphics[width=0.8\textwidth,height=0.8\textheight,keepaspectratio, trim={6.25cm 1.1cm 5.8cm 1cm},clip]{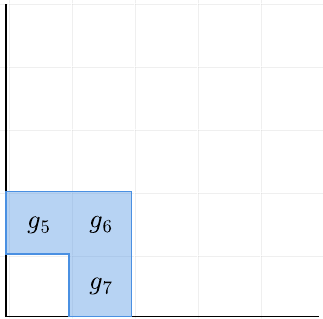}
        \caption{The tile $f_2$}
        \label{fig:visualg}
    \end{minipage}
    \hfill
    \begin{minipage}[t]{0.27\textwidth}
        \centering
        \includegraphics[width=0.8\textwidth,height=0.8\textheight,keepaspectratio, trim={6.25cm 1.1cm 5.8cm 1cm},clip]{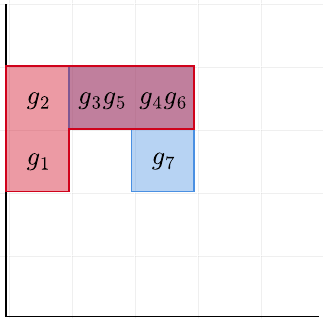}
        \caption{The product $\li{(0, 2)}f_1 \cdot \li{(1, 2)}f_2$.}
        \label{fig:visualfg}
    \end{minipage}
\end{figure}

For visualization purposes, it may be useful to think of an element $f \in \GNn$ as a ``tile'': it consists of a finite set $\supp(f) \subset \N^n$ of ``cells'', each of which contains a non-trivial element of $G$.
The shift $f \mapsto \li{\beta}f$ translates the tile by a vector $\beta \in \N^n$.
The product $f_1 f_2 \cdots f_k$ corresponds to superimposing the tiles $f_1, f_2, \ldots, f_k$, with $f_1$ as the top layer, and multiplying the entries in each cell from top to bottom.
See Figures~\ref{fig:visualf}, \ref{fig:visualg}, \ref{fig:visualfg} for illustration.

A subgroup $I \leq \GNn$ is called \emph{shift-stable} or \emph{stable}, if 
\[
\li{\beta} I \subseteq I
\]
for all $\beta \in \N^n$.
%We say a stable subgroup $I$ is \emph{generated} by a set $S \subseteq \GNn$, if $I$ is the smallest stable subgroup containing $S$; in this case we denote $I = \gen{S}$.
The stable subgroup generated by a set $S \subseteq \GNn$ is the group generated by the elements $\li{\beta}s$, where $\beta \in \N^n, s \in S$.
This is the smallest stable subgroup containing $S$, and we denote it by $\gen{S}$.
A first natural problem one encounters is the \emph{membership problem}: given $f \in \GNn$ and $S \subset \GNn$, decide whether $f \in \gen{S}$.

When $G$ is the abelian group $(\Z, +)$, one can identify an element $f \in \GNn$ with the polynomial 
\[
\sum_{(\alpha_1, \ldots, \alpha_n) \in \N^n} f(\alpha_1, \ldots, \alpha_n) X_1^{\alpha_1} \cdots X_n^{\alpha_n} \in \Z[X_1, \ldots, X_n].
\]
Under this identification, $\GNn$ is simply $(\Z[X_1, \ldots, X_n], +)$: the additive group of a polynomial ring.
The shift $f \mapsto \li{\beta}f$ corresponds to multiplying $f$ by the monomial $X_1^{\beta_1} \cdots X_n^{\beta_n}$.
In this case, shift-stable subgroups of $\GNn$ are exactly \emph{ideals} of $\Z[X_1, \ldots, X_n]$.
Indeed, an additive subgroup $I \leq \Z[X_1, \ldots, X_n]$ is shift-stable if and only if it is stable under multiplication by monomials, and this is equivalent to $I$ being stable under multiplication by polynomials, i.e., $I$ is an ideal.
%Computation of ideals in polynomial rings is a staple of algorithmic algebra.
One of the most influential tools for computation in ideals is \emph{Gr\"{o}bner bases} (also known as \emph{standard bases}), due to the seminal works of Hironaka~\cite{Hironaka1964} and Buchberger~\cite{Buchberger1965}.
A Gr\"{o}bner basis of an ideal can be effectively computed from its generators by the well-known \emph{Buchberger's algorithm}~\cite{Buchberger1965}.
This allows one to solve essential algorithmic problems such as the \emph{membership problem}, \emph{ideal saturation}, and \emph{variable elimination}.
See~\cite{CoxLittleOShea2015} for a modern treatment of the subject.

When $G$ is a non-abelian group, traditional tools like Gr\"{o}bner bases no longer apply.
Current generalizations of Gr\"{o}bner bases focus on relaxing commutativity for the \emph{multiplicative} structure of the ring $R[X_1, \ldots, X_n]$~\cite{Mora1994, Bokut2020GrobnerShirshov}.
%These generalizations do not relax commutativity of the \emph{additive} structure.
In contrast, shift-stable subgroups of $\GNn$ replace the \emph{additive} structure of $R$ by a non-abelian $G$.
%This lands outside the scope of known generalizations.

To adapt the classical ideas to the setting of stable subgroups, in Section~\ref{subsec:basis_def} we define a notion of standard bases for shift-stable subgroups of $\GNn$, where $G$ is an arbitrary finite group.
%Our notion generalizes the classical Gr\"{o}bner basis for commutative $G$.
In Section~\ref{subsec:basis_comp} we construct an algorithm that, given a finite generating set of a shift-stable subgroup $I \leq \GNn$, computes a standard basis for $I$.
%Implicitly, this gives an effective Hilbert basis theorem for shift-stable subgroups of $\GNn$ (i.e., all stable subgroups are finitely generated).
The main challenge in constructing the algorithm is to establish a termination criterion.
In the classical setting of ideals, \emph{Buchberger's criterion} ensures that an algorithm can terminate once all \emph{S-polynomials} reduce to zero.
The stable subgroup setting is much more subtle, as the non-commutativity of $G$ hinders the usage of this criterion.
To overcome this, we develop a notion of \emph{S-tiles}, which takes into account non-trivial conjugations.
The key technical contribution is Theorem~\ref{thm:criterion_tile}, which establishes a termination criterion using S-tiles.
%This notion is the key to our algorithm.
The rest of Section~\ref{sec:inv} uses standard bases to solve a list of algorithmic problems.
This is summarized partially as the following theorem:

\begin{theorem}[Section~\ref{sec:inv}]\label{thm:informal}
    %Let $G$ be a finite group.
    There is an algorithm that, given a finite group $G$ and a finite set of generators of a shift-stable subgroup $I \leq \GNn$, computes a standard basis of $I$ (Theorem~\ref{thm:compute_tile_basis}).
    Furthermore, given the generators of $I$, there are algorithms for the following problems:
    \begin{enumerate}[nosep, label=\roman*.]
        \item (Membership problem, Theorem~\ref{thm:membership}) given $f \in \GNn$, decide whether $f \in I$.
        \item (Subgroup saturation, Theorem~\ref{thm:Bayer_tile}) for $\varepsilon_n = (0, 0, \ldots, 1) \in \N^n$, compute a finite set of generators for the shift-stable subgroup
        \[
        I \colon \varepsilon_n^{\infty} \coloneqq \left\{f \in G^{(\N^n)} \;\middle|\; \exists t \in \N \text{ such that } \li{t\varepsilon_n}f \in I \right\}.
        \]
        \item (Variable elimination, Theorem~\ref{thm:elimination_tile}) given $d \geq 1$, compute a finite set of generators for the $(\N^{n-1}\text{-})$shift-stable subgroup
        \[
        I_{[0, d-1] \times \N^{n-1}} \coloneqq \{f \in I \mid \supp(f) \subseteq [0, d-1] \times \N^{n-1}\} \leq (G^d)^{(\N^{n-1})}.
        \]
    \end{enumerate}
\end{theorem}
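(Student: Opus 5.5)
The plan is to build a Gröbner-style theory for stable subgroups from scratch and then derive the three algorithmic problems from it, following the classical route for ideals.

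\textbf{Leading data and standard bases.} Fix a monomial order $\prec$ on $\N^n$; lex will do, and for elimination we take the first coordinate to be most significant. For $f \neq e$, let $\LT(f)$ be the $\prec$-maximal point of $\supp(f)$ and $\LC(f) = f(\LT(f)) \in G$. For a stable subgroup $I$ and each $\alpha \in \N^n$, put
\[
L_\alpha(I) = \{\LC(f) : f \in I,\ \LT(f) = \alpha\} \cup \{e\}.
\]
Because $I$ is a group and $\prec$ is compatible with shifts, $L_\alpha(I)$ is a subgroup of $G$. Moreover $L_\alpha(I) \le L_{\alpha+\beta}(I)$ for all $\beta$. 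A finite set $B \subseteq I$ is a \emph{standard basis} if, for every $\alpha$, the group $L_\alpha(I)$ is generated by the elements $\LC(b)$ with $b \in B$ and $\LT(b) \le \alpha$ componentwise. $G$ has finitely many subgroups and Dickson's lemma holds on $\N^n$, so $\alpha \mapsto L_\alpha(I)$ is a monotone map into a finite poset. Hence it stabilizes on finitely many upward-closed regions, and a finite standard basis exists.

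\textbf{Reduction and membership.} Reduction of $f$ modulo $B$ works like division. Write $\alpha = \LT(f)$ and $g = \LC(f)$. If $g \in \gen{\LC(b) : b \in B,\ \LT(b) \le \alpha}$, express $g$ as a word in these leading coefficients and multiply $f$ on the left by the inverse of the corresponding product of shifts $\li{\alpha - \LT(b)} b$. This kills position $\alpha$ and changes only positions $\prec \alpha$. There are finitely many positions below any fixed point in lex order only once the support is bounded, so I use well-foundedness instead: $\prec$ is a well-order, so the process terminates. It ends either at $e$ or at an irreducible leading term, and for a standard basis this decides $f \in I$ (item i). Saturation (item ii) follows the Bayer trick. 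Use an order in which $\varepsilon_n$ is least significant in a grevlex-like sense. Then $f \in I \colon \varepsilon_n^\infty$ iff $f$ is obtained by "dividing" standard basis elements by the maximal power of $\varepsilon_n$. This requires checking that leading-coefficient groups behave well under unshifting in the last coordinate. Elimination (item iii) uses lex with the first coordinate most significant. Elements of $I$ supported in $[0,d-1]\times\N^{n-1}$ have leading terms there, and reductions never leave that slab. So the basis elements lying in the slab generate $I_{[0,d-1]\times \N^{n-1}}$ as an $\N^{n-1}$-stable subgroup, viewed inside $(G^d)^{(\N^{n-1})}$.

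\textbf{Computing the basis.} This is the main obstacle. Starting from the generators, I repeatedly form \emph{S-tiles} and reduce them, adding nonzero remainders. For two basis elements $b_1, b_2$ with leading positions $\alpha_1, \alpha_2$ and $\gamma = \alpha_1 \vee \alpha_2$, the S-tiles include the product $\li{\gamma-\alpha_1}b_1 \cdot \li{\gamma-\alpha_2}b_2$ and its inverse variants, together with relator-words $w(\li{\gamma-\alpha_1}b_1, \ldots)$ for each relation $w$ among leading coefficients in $G$. Crucially, they also include the commutators and conjugates $[\li{\gamma-\alpha_1}b_1, \li{\delta}b_2]$. These matter because the lower-order parts do not commute. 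Termination of the loop follows from the finiteness/Dickson argument applied to the leading data, which increases strictly with each addition. The hard part is the criterion: if all S-tiles reduce to $e$, then $B$ is a standard basis. I would prove it along the lines of Buchberger's criterion. Take $f \in I$ written as a product of shifts of elements of $B$, and choose a representation minimizing the maximal leading position $\mu$ that appears, together with the number of factors attaining it. If $\mu \succ \LT(f)$, the leading coefficients at $\mu$ multiply to $e$, so they form a relator word in $G$. Rewriting it via the S-tiles — using commutator S-tiles to move factors past one another — and substituting their reductions lowers $\mu$ or the count, which contradicts minimality. Showing that finitely many S-tiles suffice, meaning that conjugations by elements arising from arbitrary shifts can be bounded, is where I expect the real work to lie.
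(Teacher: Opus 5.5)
The central step is missing. You never prove that trivial reductions of your S-tiles force $B$ to be a standard basis. The S-tile family is also left vague: the offsets $\delta$ in $[\li{\gamma-\alpha_1}b_1,\li{\delta}b_2]$ are unspecified, so you establish neither finiteness nor computability of the test, and the algorithm needs both. Two concrete ideas are required.

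\textbf{(a) Pairs are not enough.} After gathering the top factors, the relation $\prod_i \LC(y_i)=e$ can involve three or more distinct leading coefficients with no sub-relation among fewer. For example, take $G=(\Z/2)^2$ with leading coefficients $(1,0),(0,1),(1,1)$ at $\varepsilon_1,\varepsilon_2,\varepsilon_3\in\N^3$; these cancel only at $(1,1,1)$. So you must test every element with trivial top value in the finite group generated by the aligned shifts $\li{\gamma-\multideg(b)}b$, $b\in S$, for arbitrary $S\subseteq B^{\pm}$, where $\gamma$ is the coordinatewise maximum of the $\multideg(b)$.

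\textbf{(b) The bound you expect to be hard comes from supports.} Pointwise, $[x^{-1},y^{-1}]$ is supported in $\supp(x)\cap\supp(y)$. It is therefore trivial unless the supports overlap, and then the two shift vectors differ by at most $\max_{b\in B}\|b\|$ in each coordinate. After translating by their coordinatewise minimum, only finitely many commutators remain. Since $xy=y\,x\,[x^{-1},y^{-1}]$ and the commutator has multidegree below that of $x$, swapping a top factor past a lower one creates only lower factors.

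The paper handles both points at once. It writes $f$ as a product of conjugates $(\li{\beta}b)^{h}$ with $h\in\gen{B}$ (conjugation preserves supports) and pulls the top-degree conjugates to the front. It then discards conjugator factors whose support misses $\supp(\li{\beta}b)$ and translates by the coordinatewise minimum of the remaining shift vectors. Its S-tiles are all products of such conjugates of shifts in $[0,2\max_b\|b\|]^n$. This set is finite and computable because each conjugacy orbit lives on a fixed support and is obtained by closing under conjugation by $B^{\pm}$.

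Two of your applications also fail as stated.

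\textbf{Saturation.} Bayer's trick is only valid after homogenizing. The key fact is that for a \emph{homogeneous} tile, the grevlex-leading position has positive last coordinate only if every support point does. Without this, dividing basis elements by the largest power of $\varepsilon_n$ need not give $I\colon\varepsilon_n^{\infty}$. Take $G=\Z/p$, so that stable subgroups of $G^{(\N^2)}$ are ideals of $(\Z/p)[x,y]$, and let $I=\gen{xy+x+1,\,y^2-x-1}$. Its grevlex standard basis is $\{xy+x+1,\,y^2-x-1,\,x^2+y-1\}$, and none of these is divisible by $y$, so your method returns generators of $I$ itself. Yet $y(y^2+y-1)\in I$ while $y^2+y-1\notin I$ (its remainder is $x+y$), so $I\colon\varepsilon_2^{\infty}\neq I$. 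The paper avoids this by homogenizing into $G^{(\N^{n+1})}$, saturating there, and dehomogenizing, proving both inclusions.

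\textbf{Elimination.} The basis elements lying in the slab do not suffice, because the $\N^{n-1}$-stable subgroup allows no shifts in the first coordinate. For $I=\gen{\delta_g}$ and $d=2$, the tile $\li{\varepsilon_1}\delta_g$ lies in $I_{[0,1]\times\N^{n-1}}$ but not in the $\N^{n-1}$-stable subgroup generated by $\delta_g$. You need every $\li{t\varepsilon_1}b$ with $t<d$ that stays in the slab, as in the paper's set $B_{[0,d-1]}$.
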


Here, the finite group $G$ may be given by any effective representation, for example its multiplication table.
%In Section~\ref{subsec:nested} we will also define and solve the \emph{nested subgroup membership} problem, which will be crucial for later applications in $G \wr \Z^n$.
We mention that a similarly named concept of \emph{group shifts} has been studied in the context of symbolic dynamics~\cite{kitchens1989automorphisms, beaur2024effective}.
These are shift-invariant subgroups of the unrestricted direct product $G^{\Z^d}$ which are \emph{topologically closed}.
However, their algorithmic framework differs substantially from stable subgroups of $\GNn$ or $\GZn$.
%However, topological closed-ness means that group shifts can be represented by \emph{forbidden patterns}, hence their algorithmic framework differs substantially from stable subgroups of $\GNn$ or $\GZn$.

\subsection{Subgroup Membership in wreath products}\label{subsec:intro_wreath}
In the second part of this paper, we apply the tools from Theorem~\ref{thm:informal} to decide Subgroup Membership in wreath products $G \wr \Z^n$ for finite $G$.

For a group $G$, the restricted direct product $\GZn$ is defined similarly to $\GNn$: its elements are finitely supported maps from $\Z^n$ to $G$.
For $\beta \in \Z^n$ and $f \in \GZn$, the element $\li{\beta} f \in \GZn$ is defined as
$
\li{\beta} f (\alpha) = f(\alpha - \beta),\; \alpha \in \Z^n
$.
The wreath product $G \wr \Z^n$ is the group whose elements are pairs $(f, \alpha)$ with $f \in \GZn, \alpha \in \Z^n$, and where multiplication is defined by
\[
(f, \alpha) \cdot (g, \beta) = (f \cdot \li{\alpha}g,\; \alpha + \beta).
\]
%Let $0$ denote the neutral element of $\GZn$, then 
The neutral element is $(e^{\Z^n}, 0)$.
See Figures~\ref{fig:wreathf}-\ref{fig:wreathfg} for an illustration: one can visualize $(f, \alpha) \in G \wr \Z^n$ as a tile $f \in \GZn$ together with a vector $\alpha \in \Z^n$.
Left-multiplication by $(f, \alpha)$ corresponds to translating by $\alpha$ and applying the tile $f$ on top.
%See Figures~\ref{fig:wreathf}, \ref{fig:wreathg}, \ref{fig:wreathfg} for illustration.
In Section~\ref{sec:membership} of this paper, we show:

\begin{restatable}{theorem}{thmgroup}\label{thm:group}
    Subgroup Membership is (uniformly) decidable in $G \wr \Z^n$ for finite $G$ and $n \in \N$.
\end{restatable}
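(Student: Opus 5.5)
We decide membership of $h$ in $H = \gen{h_1, \ldots, h_k}$ by splitting $H$ along the projection $\pi \colon G \wr \Z^n \to \Z^n$, $(f, \alpha) \mapsto \alpha$. The image $\pi(H)$ is the subgroup $\Lambda \leq \Z^n$ generated by the $\pi(h_i)$, and a basis of it can be computed by Smith normal form. The kernel part $K = H \cap \GZn$ is a subgroup of $\GZn$ stable under shifts by $\Lambda$. Membership is then reduced as follows. First check that $\pi(h) \in \Lambda$. If so, write $\pi(h)$ as an integer combination of the $\pi(h_i)$ to obtain an explicit $w \in H$ with $\pi(w) = \pi(h)$. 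Then $h \in H$ if and only if $w^{-1}h \in K$. So everything reduces to two tasks: computing a finite description of $K$ as a shift-stable subgroup, and deciding membership in it.

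\textbf{Step 1: reduce to a lattice $\Z^m$ acting on $\GZn$.} After a change of basis of $\Z^n$, we may assume $\Lambda = d_1\Z \times \cdots \times d_m\Z \times 0$. Then $\GZn$ is isomorphic, as a $\Lambda$-group, to $\left(G^{D}\right)^{(\Z^m)}$ with $D$ finite only when $m = n$. When $m < n$ the remaining $n-m$ coordinates are a free, unshifted index. Handling the case $m<n$ is part of the difficulty, so we isolate it.

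\textbf{Step 2: generators of $K$.} Pick lifts $g_1, \ldots, g_m \in H$ of a basis of $\Lambda$; these are commuting lifts modulo $K$. Then $K$ is generated, as a subgroup stable under conjugation by the $g_j^{\pm 1}$, by the following elements:
\begin{itemize}
\item the elements $h_i w_i^{-1}$, where $w_i$ is the word in the $g_j$ with the same image;
\item the commutators $[g_i, g_j]$.
\end{itemize}
This is the standard Reidemeister--Schreier argument for an extension by a free abelian group. Conjugation by $g_j$ acts on $\GZn$ as a shift by $\pi(g_j)$ composed with conjugation by a fixed element of $\GZn$. Thus $K$ is a ``twisted'' shift-stable subgroup.

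\textbf{Step 3: untwist.} To remove the twist, we want to replace the $\Z^m$-stable condition by an $\N^m$-stable one. We pass to $\N^m$ via the standard trick of adding variables for inverses: $\Z^m$-stability becomes $\N^{2m}$-stability together with a saturation, so that shift by $\varepsilon_i+\varepsilon_{m+i}$ acts trivially. The saturation is then computed with Theorem~\ref{thm:Bayer_tile}, iterated over coordinates with permutations. The conjugation twist is handled by encoding each element $(f,\alpha)$ via its tile over an enlarged fundamental domain and by including the conjugating elements among the generators.

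\textbf{Step 4: the non-full-rank case.} The infinite unshifted directions form the main obstacle. There, $K$ lives in a direct product indexed by $\Z^{n-m}$ that is not acted upon. We would resolve this by noting that any finite generating set has support inside a bounded box $B$ in those directions. Then $K$ sits inside $\left(G^{B}\right)^{(\Z^m)}$ with $B$ finite, and membership of $w^{-1}h$ requires its support in those directions to be inside $B$. The variable elimination of Theorem~\ref{thm:elimination_tile} gives exactly the restriction to such bounded strips. Membership is finally decided by computing a standard basis (Theorem~\ref{thm:compute_tile_basis}) and applying Theorem~\ref{thm:membership}.

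I expect Step 3 to be the genuine difficulty. Its conjugation twists are not shifts, and converting a $\Z^m$-stable subgroup into an $\N^m$-stable one needs saturation in each of the $2m$ directions, with careful correctness proofs.
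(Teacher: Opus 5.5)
\textbf{There is a genuine gap in Step~3.} Your Steps 1, 2 and 4 are sound and match the paper. Reducing to $K=H\cap\GZn$, passing to a full-rank lattice over a finite group $G^C$, and folding the bounded box in the unshifted directions into the finite group is Lemma~\ref{lem:subgroup_special}. Your generating set for $K$ is Proposition~\ref{prop:standard_form}.

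Step~3 is the core of the problem, and your final step depends on it. Theorems~\ref{thm:compute_tile_basis} and~\ref{thm:membership} apply only to honest shift-stable subgroups. But $K$ is stable only under the twisted maps $t\mapsto g_j\cdot\li{\varepsilon_j}t\cdot g_j^{-1}$. Their iterates $\li{(g,\varepsilon)^z}t=u_z\cdot\li{z\varepsilon}t\cdot u_z^{-1}$ (for $z>0$) involve the accumulated conjugator $u_z=g\cdot\li{\varepsilon}g\cdots\li{(z-1)\varepsilon}g$, whose support grows with $z$. Enlarging a fundamental domain cannot absorb this, and adding the conjugating tiles as generators changes the group.

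Here is a concrete full-rank example, where Steps 1 and 4 play no role. Take $S_3\wr\Z$ with $H=\gen{(\delta_{(12)},1),(\delta_{(13)},0)}_{\gp}$. A direct computation shows that $K$ consists exactly of the tiles with values in $\{e,(13)\}$ at positions $\geq 0$ and in $\{e,(23)\}$ at positions $\leq -1$. So $K$ is closed under neither $\li{1}(\cdot)$ nor $\li{-1}(\cdot)$, and it does not contain the conjugating tile $\delta_{(12)}$. Its behaviour in the two directions is genuinely different, so there is no shift-stable subgroup for a standard basis to be computed of.

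As a side remark, the inverse-variable device is not needed and is the approach the paper says fails for non-abelian $G$. The $\Z$-directions are handled by saturation alone (Observation~\ref{obs:negative_saturation_d}), but that is not the real obstacle.

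\textbf{The missing idea is that the twist is local.} Conjugation by a tile whose support is disjoint from $\supp(t)$ fixes $t$. So in $\li{(g_1,\varepsilon_1)^{z_1}\cdots(g_n,\varepsilon_n)^{z_n}}t$, only the last $d$ steps in the last coordinate $i$ with $|z_i|\geq d$ matter. Here $d$ is the maximal size of the generators $t$ plus the maximal size of the $g_j$. Hence every such conjugate is a plain translate, by a vector in $\Z^{i-1}\times\N\times\{0\}^{n-i}$ or $\Z^{i-1}\times(-\N)\times\{0\}^{n-i}$, of one of finitely many tiles in $B_{i,+}$ or $B_{i,-}$ (Proposition~\ref{prop:group_to_nested}).

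This gives a nested subgroup $\gen{B_0\mid B_{1,+},B_{1,-}\mid\cdots\mid B_{n,+},B_{n,-}}$, which is still not shift-stable. Deciding membership in it needs a separate inductive argument (Theorem~\ref{thm:nested_tile}):
\begin{enumerate}[nosep, label=(\arabic*)]
\item Translate $B_{n,+}$ and $B_{n,-}$ into disjoint half-spaces, so that their translates commute.
\item Collect them at the front, at the cost of conjugating the lower-level generators by $I=\gen{B_{n,+}}_{\Z^{n-1}\times\N}\cdot\gen{B_{n,-}}_{\Z^{n-1}\times-\N}$.
\item Use saturation and variable elimination to compute the elements of $\gen{B_{n,+}}_{\Z^{n-1}\times\N}$ and $\gen{B_{n,-}}_{\Z^{n-1}\times-\N}$ supported in $\Z^{n-1}\times[-d,d]$.
\item Recurse in dimension $n-1$ over the finite group $G^{2d+1}$.
\end{enumerate}
Standard bases are applied only to the genuinely shift-stable subgroups $\gen{B_{n,\pm}}$, never to $K$ itself.
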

Formally, the algorithm we give takes as input the finite group $G$, the number $n$, elements $h_1, \ldots, h_k, h \in G \wr \Z^n$, and decides whether $h$ is in the subgroup generated by $h_1, \ldots, h_k$.

By a reduction of Shafrir~\cite[Theorem~4.3.4]{potthast2020submonoid}, Theorem~\ref{thm:group} also implies that \emph{Submonoid Membership} is decidable in $G \wr \Z^2$ for finite $G$.
Whether Submonoid Membership is decidable in $G \wr \Z^n$ for finite $G$ and $n \geq 3$ remains an open problem.

It is worth noting that a result of Lohrey, Steinberg and Zetzsche~\cite{lohrey2015rational} showed that Subgroup Membership (and Rational Subset Membership) is decidable in wreath products $G \wr F$, where $G$ is finite and $F$ is \emph{virtually free}.
Their algorithm is based on a refinement of the well-quasi-order of subsequences~\cite{higman1952ordering}.
It would be interesting to investigate in a broader context how this relates to the well-quasi-order of ideals and subgroups in our work.
See Table~\ref{tbl:stateofart} for a comparison of our result with some related work on membership problems in wreath products.

\begin{table}[h!]
%\setstretch{1.5}
\newcolumntype{C}[1]{>{\centering\arraybackslash}m{#1}}
\centering
\begin{tabular}{ | C{1.2cm} | C{1.5cm}||C{2.2cm}| C{3cm}|| C{2.8cm} | C{3cm} |} \hline
    $G$ & $F$ & Subgroup Membership & (main argument) & Rational Subset Membership & (main argument) \\
  \hline
  \hline
  abelian & $\Z^n$ & decidable \cite{romanovskii1974some} & Gr\"{o}bner bases & undecidable \cite{lohrey2015rational} $(\Z \wr \Z)$ & two-counter automata \\
  %\hline
  %abelian & nilpotent & decidable \cite{romanovskii1980occurrence} & Non-commutative Gr\"{o}bner bases & undecidable & subsumed by $\uparrow$ \\
  \hline
  finite & $\Z^n$ & \textbf{decidable} \textbf{(Section~\ref{sec:membership})} & \textbf{standard bases} \textbf{(Section~\ref{sec:inv})} & undecidable \cite{lohrey2011tilings} $(n \geq 2)$ & 2D-tiling problem\\
  %\hline
  %$\Z^m$ & free & $?$ & & undecidable & subsumed by above \\
  \hline
  finite & free & decidable & subsumed by $\rightarrow$ & decidable \cite{lohrey2015rational} & well-quasi-order of subsequences \\
  \hline
\end{tabular}
%\vspace{0.5cm}
\caption{\label{tbl:stateofart} Selected decidability results for wreath products $G \wr F$, and their main arguments}
\end{table}
\vspace{-0.2cm}

\section{Preliminaries, Gr\"{o}bner bases, and Buchberger's algorithm}\label{sec:prelim}
In this section we recall the computational basics on ideals in polynomial rings, including the definition of monomial ordering, Gr\"{o}bner basis, and Buchberger's algorithm.
Although these preliminaries are not strictly needed for our subsequent study of shift-stable subgroups, they give an overview of the classical ideas and provide a comparison between known work and our generalization.
The treatment in this section closely follows the monograph~\cite[Chapter~2]{CoxLittleOShea2015}.

\subsection{Gr\"{o}bner bases}\label{subsec:prelim_GB}
%We adopt the convention that $0 \in \N$.
An \emph{ideal} of a commutative ring $R$ is a subset $I \subseteq R$ that satisfies: (i) $x, y \in I \implies x+y \in I$, (ii) $x \in I \implies -x \in I$, and (iii) $r \in R, x \in I \implies rx \in I$.
For a set of elements $S \subseteq R$, denote by $\gen{S}$ the ideal generated by $S$: this is the set of finite sums $\sum_{s \in S} r_s s$ with $r_s \in R$.

\begin{definition}[Monomial order]\label{def:order}
    Let $n \geq 0$.
    A \emph{monomial order} on $\N^n$ is a relation $<$ on the set $\N^n$, satisfying:
    \begin{enumerate}[nosep, label=(\roman*)]
        \item $<$ is a total order on $\N^n$.
        \item If $\alpha < \beta$ and $\gamma \in \N^n$, then $\alpha + \gamma < \beta + \gamma$.
        \item Every non-empty subset of $\N^n$ has a smallest element under $<$ (i.e., $<$ is a well-order).
    \end{enumerate}
\end{definition}
We write $\alpha \leq \beta$ if $\alpha < \beta$ or $\alpha = \beta$.
For a vector $\alpha \in \N^n$, we will often write $\alpha_i$ as its $i$-th coordinate, i.e., $\alpha = (\alpha_1, \ldots, \alpha_n)$.
A typical monomial order is the \emph{lexicographic order} $<_{\lex}$, where we define $\alpha <_{\lex} \beta$ if and only if $
\alpha_1 = \beta_1, \ldots, \alpha_{k-1} = \beta_{k-1}, \alpha_k < \beta_k,
$ for some $1 \leq k \leq n$.
For example, $(0, 0) <_{\lex} (0, 1) <_{\lex} (1,0) <_{\lex} (1, 1)$, i.e., the left-most entry is the most significant.

Let $\K$ be a field and $\K[X_1, \ldots, X_n]$ be the ring of polynomials with $n$ variables and coefficients in $\K$.
For $\alpha \in \N^n$ we write $X^{\alpha}$ for the monomial $X_1^{\alpha_1} \cdots X_n^{\alpha_n}$.
Let $f = \sum_{\alpha \in \N^n} c_{\alpha} X^{\alpha}$ be a polynomial in $\K[X_1, \ldots, X_n]$, and let $<$ be a monomial order, we define:
\begin{enumerate}[nosep, label=(\roman*)]
    \item The \emph{support} of $f$ is the (finite) set 
    \[
    \supp(f) \coloneqq \{\alpha \in \N^n \mid c_{\alpha} \neq 0\} \subset \N^n.
    \]
    \item The \emph{multidegree} of $f$ is the maximal element in $\supp(f)$ with respect to the order $<$:
    \[
    \multideg(f) \coloneqq \max_{<} (\supp(f)) \in \N^n.
    \]
    \item The \emph{leading coefficient} of $f$ is
    $
    \LC(f) \coloneqq c_{\multideg(f)} \in \K.
    $
    \item The \emph{leading term} of $f$ is
    $
    \LT(f) \coloneqq \LC(f) \cdot X^{\multideg(f)} \in \K[X_1, \ldots, X_n].
    $
\end{enumerate}
\smallskip

A special kind of ideals that play an important role are \emph{monomial ideals}:
\begin{definition}[Monomial ideal]
    A \emph{monomial} is a polynomial of the form $X^{\alpha}, \alpha \in \N^n$.
    An ideal $I \subseteq \K[X_1, \ldots, X_n]$ is called a \emph{monomial ideal} if it is generated by a (possibly infinite) set of monomials.
\end{definition}
Let $I$ be a monomial ideal and denote $A \coloneqq \{\alpha \in \N^n \mid X^{\alpha} \in I\}$.
Then $A + \N^n \subseteq A$, see Figure~\ref{fig:monomial} for an illustration.
Conversely, let $A \subseteq \N^n$ be a set satisfying $A + \N^n \subseteq A$, then $I \coloneqq \big\{\sum_{\alpha \in A} c_{\alpha} X^\alpha \;\big|\; c_\alpha = 0 \text{ for all but finitely many } \alpha \big\}$ is a monomial ideal, generated by $X^{\alpha}, \alpha \in A$.
Hence there is a one-to-one correspondence between monomial ideals and sets satisfying $A + \N^n \subseteq A$.
%A monomial ideal is always finitely generated:

\begin{lemma}[{Dickson's lemma~\cite[Chapter~2.4]{CoxLittleOShea2015}}]\label{lem:Dickson}
    Every set $A \subseteq \N^n$ satisfying $A + \N^n \subseteq A$ can be written as $A = \{\alpha_1, \ldots, \alpha_k\} + \N^n$ for some finite set $\{\alpha_1, \ldots, \alpha_k\} \subseteq A$.
    Equivalently, every monomial ideal is generated by finitely many monomials $X^{\alpha_1}, \ldots, X^{\alpha_k}$.
\end{lemma}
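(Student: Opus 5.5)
I will prove Dickson's lemma via the minimal-element formulation, by induction on $n$. Let $A \subseteq \N^n$ with $A + \N^n \subseteq A$, and let $M \subseteq A$ be the set of elements of $A$ that are minimal with respect to the componentwise partial order $\preceq$ (where $\alpha \preceq \beta$ iff $\beta - \alpha \in \N^n$).

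Since $\preceq$ is well-founded on $\N^n$ (every strictly decreasing chain decreases the sum of coordinates), every $\beta \in A$ lies above some $\alpha \in M$. Hence $A = M + \N^n$, and the inclusion $M + \N^n \subseteq A$ follows from the hypothesis on $A$. It therefore suffices to show that every antichain in $(\N^n, \preceq)$ is finite, because $M$ is an antichain.

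The antichain claim is the main step, and I would obtain it from the stronger fact that every infinite sequence $\gamma^{(1)}, \gamma^{(2)}, \ldots$ in $\N^n$ has an infinite subsequence that is non-decreasing in every coordinate. I would prove this by induction on the coordinate index. For the first coordinate, any sequence in $\N$ has an infinite non-decreasing subsequence: either it has an infinite constant subsequence, or it is unbounded and we can greedily pick increasing terms. Then I would pass to a further subsequence for the second coordinate, and so on through all $n$ coordinates. The result is a subsequence with $\gamma^{(i_1)} \preceq \gamma^{(i_2)}$, so no infinite antichain exists and $M$ is finite. Writing $M = \{\alpha_1, \ldots, \alpha_k\}$ gives $A = \{\alpha_1,\ldots,\alpha_k\} + \N^n$ with $\alpha_i \in A$.

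For the equivalent formulation, let $I$ be a monomial ideal and set $A = \{\alpha \mid X^\alpha \in I\}$. As noted above the lemma, $A + \N^n \subseteq A$. Applying the first part gives finitely many $\alpha_i$ with $A = \{\alpha_1,\ldots,\alpha_k\}+\N^n$. Every generating monomial $X^\beta$ of $I$ has $\beta \in A$, so $X^\beta$ is divisible by some $X^{\alpha_i}$, and hence $X^{\alpha_1}, \ldots, X^{\alpha_k}$ generate $I$. Conversely, the correspondence between monomial ideals and up-closed sets recalled before the lemma shows that the two formulations are equivalent.

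I expect no genuine obstacle here. The only point requiring care is the subsequence extraction, which I have handled by the iterated Ramsey-type argument on one coordinate at a time.
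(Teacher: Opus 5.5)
Your proof is correct, but it takes a different route from the one the paper relies on. The paper does not prove this lemma; it cites Cox--Little--O'Shea, \S2.4, where the argument is a direct induction on the dimension $n$.

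That argument runs as follows. One projects $A$ to $A' = \{\alpha \in \N^{n-1} \mid (\alpha, m) \in A \text{ for some } m\}$ and uses the induction hypothesis to get finitely many generators $\alpha(1), \ldots, \alpha(s)$ of $A'$. Choosing $m_i$ with $(\alpha(i), m_i) \in A$ and setting $m_0 = \max_i m_i$, the part of $A$ at heights $\geq m_0$ is generated by the points $(\alpha(i), m_0)$. The finitely many slices $\{\alpha \mid (\alpha, \ell) \in A\}$ with $0 \le \ell < m_0$ are then handled by the induction hypothesis separately.

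You instead prove that $(\N^n, \preceq)$ is a well-quasi-order: it is well-founded with no infinite antichains, which you get by iterated subsequence extraction. You then read off finiteness of the set of minimal elements.

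The slicing proof works entirely inside the statement being proved and needs no auxiliary notions such as antichains or subsequences. Your approach yields somewhat more:
\begin{itemize}
\item It exhibits a canonical generating set, the set $M$ of minimal elements of $A$. This set is contained in every $S \subseteq A$ with $S + \N^n = A$, so it is the unique minimal generating set. The slicing proof only produces some finite, possibly redundant, generating set.
\item The subsequence property gives the ascending chain condition on up-closed subsets of $\N^n$ in one line. That is exactly what the termination argument in Theorem~\ref{thm:compute_tile_basis} extracts from Dickson's lemma, through Lemma~\ref{lem:Dickson_tile}.
\end{itemize}

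One small presentational point: you announce an induction on $n$, but the only induction you actually carry out is over the coordinate index during subsequence extraction. You never reduce from sets in $\N^n$ to sets in $\N^{n-1}$.
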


The following reduction algorithm is a generalization of the Euclidean division algorithm:

\begin{lemma}[{Lead-reduction, see~\cite[Chapter~2.3]{CoxLittleOShea2015}}]\label{lem:reduction_ideal}
    Let $<$ be a monomial order on $\N^n$, and let $F = \{f_1, \ldots, f_k\}$ be a finite set of elements in $\K[X_1, \ldots, X_n]$.
    Then every $f \in \K[X_1, \ldots, X_n]$ can be written (by an algorithm) as
    \[
    f = q_1 f_1 + \cdots + q_k f_k + r,
    \]
    where $q_i, r \in \K[X_1, \ldots, X_n]$, and
    \begin{enumerate}[nosep, label=(\roman*)]
        \item $q_i f_i$ is either zero or $\multideg(q_i f_i) \leq \multideg(f)$.
        \item $r$ is either zero or $\LT(r) \notin \gen{\LT(f_1), \ldots, \LT(f_k)}$.
    \end{enumerate}
\end{lemma}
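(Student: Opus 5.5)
We prove the lead-reduction lemma with the classical division algorithm: we maintain an invariant and show it terminates using the well-ordering of $<$. Throughout the algorithm we keep polynomials $q_1, \ldots, q_k, r$ and a remainder-in-progress $p$ satisfying
\[
f = q_1 f_1 + \cdots + q_k f_k + r + p .
\]
We also maintain three properties: each nonzero $q_i f_i$ has $\multideg(q_i f_i) \leq \multideg(f)$; every monomial in $\supp(r)$ lies outside $\gen{\LT(f_1), \ldots, \LT(f_k)}$; and $p$ is zero or $\multideg(p) \leq \multideg(f)$. We start with $q_i = 0$, $r = 0$, $p = f$.

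At each step with $p \neq 0$, let $\beta = \multideg(p)$. If some $i$ has $\multideg(f_i) \leq \beta$ componentwise, i.e.\ $\beta \in \multideg(f_i) + \N^n$, take the smallest such $i$ and write $\gamma = \beta - \multideg(f_i) \in \N^n$. We replace $q_i$ by $q_i + \frac{\LC(p)}{\LC(f_i)} X^{\gamma}$ and $p$ by $p - \frac{\LC(p)}{\LC(f_i)} X^{\gamma} f_i$. Axiom (ii) of Definition~\ref{def:order} gives $\multideg(X^\gamma f_i) = \gamma + \multideg(f_i) = \beta$. Hence the leading terms cancel, and the new $p$ is zero or has multidegree $< \beta$. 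The term added to $q_i f_i$ has multidegree $\beta \leq \multideg(f)$, so the invariant on $q_i f_i$ survives; adding terms all of multidegree $\leq \multideg(f)$ keeps the maximum $\leq \multideg(f)$.

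If no such $i$ exists, we move $\LT(p)$ into $r$ and replace $p$ by $p - \LT(p)$. This also strictly lowers $\multideg(p)$ or makes $p = 0$. The monomial $X^\beta$ is not in $\gen{\LT(f_1), \ldots, \LT(f_k)}$, because a monomial lies in a monomial ideal exactly when it is divisible by one of the generators. This is the correspondence with sets $A$ satisfying $A + \N^n \subseteq A$ noted before Lemma~\ref{lem:Dickson}: the set of exponents of monomials in the ideal is $\{\multideg(f_1), \ldots, \multideg(f_k)\} + \N^n$.

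For termination, the sequence of values $\multideg(p)$ strictly decreases in $<$. Since $<$ is a well-order by Definition~\ref{def:order}(iii), there is no infinite strictly decreasing sequence, so eventually $p = 0$. The resulting identity is then the required representation. Property (ii) of the lemma holds because every term of $r$, and in particular $\LT(r)$, lies outside the monomial ideal.

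The only point needing care is the cancellation step, which uses the compatibility axiom (ii) of the monomial order. There is no real obstacle here: each step is effective because field operations in $\K$ and comparisons in $<$ are assumed computable.
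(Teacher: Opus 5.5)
Your proof is correct, and it is the standard division algorithm from Cox--Little--O'Shea, Chapter~2.3, which is what the paper cites for this lemma; the paper gives no proof of its own here. The paper's proof of the analogous Lemma~\ref{lem:reduction} for stable subgroups uses the same two ingredients, cancelling the leading term and terminating by the well-order. The only difference is that it stops as soon as $\LT(r)$ is not in the monomial ideal, which is all condition (ii) needs, whereas your version moves that term into $r$, continues, and makes every term of $r$ irreducible.
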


In Lemma~\ref{lem:reduction_ideal}, the element $r$ is called a \emph{reduction of $f$ by $F$}. 
In particular, $f -r \in \gen{f_1, \ldots, f_k}$.
In general, the reduction $r$ is not unique.
For example, we reduce $f = X Y^2 - X$ by $F = \{f_1 = X Y - 1,\; f_2 = Y^2 - 1\}$ in the lexicographic order.
We can write $f = Yf_1 + 0f_2 + (Y - X)$, and $\LT(Y - X) = -X \notin \gen{\LT(f_1) = X Y,\; \LT(f_2) = Y^2}$, so $Y-X$ is a reduction of $f$ by $F$.
But we can also write $f = Xf_1 + 0 f_2 + 0$, so $0$ is also a reduction of $f$ by $F$.

Note that the ideal $\gen{\LT(f_1), \ldots, \LT(f_k)}$ in Lemma~\ref{lem:reduction_ideal} is a monomial ideal generated by $\LT(f_i)/\LC(f_i), i = 1, \ldots, k$.
For an ideal $I$, define the set
$
\LT(I) \coloneqq \{\LT(f) \mid f \in I\}
$,
and let $\gen{\LT(I)}$ be the ideal it generates.
Combining Dickson's lemma with the reduction algorithm yields:

\begin{corollary}[Hilbert's basis theorem]\label{cor:Hilbert}
    Every ideal $I$ of $\K[X_1, \ldots, X_n]$ is finitely generated.
\end{corollary}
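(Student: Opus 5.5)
The plan is the standard argument combining Dickson's lemma (Lemma~\ref{lem:Dickson}) with lead-reduction (Lemma~\ref{lem:reduction_ideal}). Fix any monomial order $<$ on $\N^n$, for instance $<_{\lex}$. If $I = \{0\}$ there is nothing to prove, since $I$ is generated by the empty set (or by $0$). Otherwise, consider the ideal $\gen{\LT(I)}$. Since $\K$ is a field, this ideal is generated by the monomials $X^{\multideg(f)}$ with $0 \neq f \in I$, so it is a monomial ideal. By Lemma~\ref{lem:Dickson} it is generated by finitely many monomials. I will arrange that these monomials are of the form $X^{\multideg(f_i)}$ for some $f_1, \ldots, f_k \in I \setminus \{0\}$. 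To do so, let $A = \{\alpha \mid X^\alpha \in \gen{\LT(I)}\}$. Every element of $A$ is of the form $\multideg(f) + \gamma$ with $f \in I$ and $\gamma \in \N^n$, because a monomial lies in a monomial ideal exactly when it is divisible by one of the generating monomials. Apply Dickson's lemma to the set $\{\multideg(f) \mid 0 \neq f \in I\} + \N^n$, which equals $A$. This gives finitely many minimal generators $\alpha_1, \ldots, \alpha_k$, each lying in $\{\multideg(f)\} + \N^n$. In fact each may be taken equal to some $\multideg(f_i)$: writing $\alpha_j = \multideg(f) + \gamma$, minimality of the generating set (after discarding redundant ones) forces $\gamma = 0$. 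Alternatively, one can simply replace $\alpha_j$ by the smaller element $\multideg(f)$, which still lies in $A$ and still generates.

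Next, I show $I = \gen{f_1, \ldots, f_k}$. The inclusion $\supseteq$ is clear since each $f_i \in I$. For $\subseteq$, take $f \in I$ and apply Lemma~\ref{lem:reduction_ideal} with $F = \{f_1, \ldots, f_k\}$. This writes $f = \sum q_i f_i + r$. Then $r = f - \sum q_i f_i \in I$. Suppose $r \neq 0$. Then $\LT(r) \in \LT(I) \subseteq \gen{\LT(I)} = \gen{\LT(f_1), \ldots, \LT(f_k)}$, contradicting property (ii) of the reduction. Hence $r = 0$ and $f \in \gen{f_1,\ldots,f_k}$.

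The only step requiring care is the identification $\gen{\LT(I)} = \gen{\LT(f_1), \ldots, \LT(f_k)}$. This rests on the fact that membership of a monomial in a monomial ideal reduces to divisibility by a generator. That fact follows from the correspondence between monomial ideals and upward-closed sets $A$ described before Lemma~\ref{lem:Dickson}. Once this is in place, the rest is routine.
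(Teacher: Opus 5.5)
Your proposal is correct and matches the paper's proof: apply Dickson's lemma to the monomial ideal $\gen{\LT(I)}$ to get $f_1, \ldots, f_k \in I$ with $\gen{\LT(I)} = \gen{\LT(f_1), \ldots, \LT(f_k)}$, then use lead-reduction to show every remainder must vanish. You add detail on choosing the Dickson generators as multidegrees of actual elements of $I$, and on handling $I = \{0\}$; the paper leaves both implicit.
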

\begin{proof}
    Fix any monomial order.
    Applying Dickson's Lemma to the monomial ideal $\gen{\LT(I)}$, we obtain $f_1, \ldots, f_k \in I$ such that $\gen{\LT(I)} = \gen{\LT(f_1), \ldots, \LT(f_k)}$.
    We show that $I = \gen{f_1, \ldots, f_k}$.
    For any $f \in I$, let $r \in I$ be its reduction by $\{f_1, \ldots, f_k\}$.
    Then $\LT(r) \in \LT(I)$ $\subseteq \gen{\LT(f_1), \ldots, \LT(f_k)}$, so $r$ must be zero by definition of reduction, hence $f \in \gen{f_1, \ldots, f_k}$.
\end{proof}

The choice of $f_1, \ldots, f_k$ in the above proof motivates the definition of a \emph{Gr\"{o}bner basis}:

\begin{definition}[{Gr\"{o}bner basis~\cite[Chapter~2.5]{CoxLittleOShea2015}}]\label{def:Groebner}
    Fix a monomial order on $\N^n$.
    A finite subset $B = \{b_1, \ldots, b_k\}$ of an ideal $I \subseteq \K[X_1, \ldots, X_n]$ is called a \emph{Gr\"{o}bner basis}, if
    \begin{equation*}
    \gen{\LT(I)} = \gen{\LT(b_1), \ldots, \LT(b_k)}.
    \end{equation*}
\end{definition}

When a Gr\"{o}bner basis of an ideal is known, we can use it to decide membership in the ideal:

\begin{theorem}[Ideal membership]\label{thm:reduction_ideal}
    Let $B = \{b_1, \ldots, b_k\}$ be a Gr\"{o}bner basis of an ideal $I$ and $f \in \K[X_1, \ldots, X_n]$. 
    If $f \in I$ then all reductions of $f$ by $B$ are zero.
    Conversely, if any reduction of $f$ by $B$ is zero then $f \in I$.
\end{theorem}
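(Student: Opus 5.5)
The proof follows the pattern of Corollary~\ref{cor:Hilbert}, using the reduction of Lemma~\ref{lem:reduction_ideal} together with the defining property of a Gr\"{o}bner basis (Definition~\ref{def:Groebner}).

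\textbf{The converse direction.} Suppose some reduction $r$ of $f$ by $B$ is zero. By Lemma~\ref{lem:reduction_ideal} we can write
\[
f = q_1 b_1 + \cdots + q_k b_k + r = q_1 b_1 + \cdots + q_k b_k
\]
for some $q_i \in \K[X_1, \ldots, X_n]$. Each $b_i$ lies in $I$ and $I$ is an ideal, so each $q_i b_i \in I$. Hence their sum lies in $I$, that is, $f \in I$.

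\textbf{The forward direction.} Let $f \in I$, and let $r$ be any reduction of $f$ by $B$, so that
\[
f = q_1 b_1 + \cdots + q_k b_k + r .
\]
Then $r = f - \sum_i q_i b_i$. Since $f \in I$ and each $b_i \in I$, closure of $I$ under sums, negatives and multiplication by ring elements gives $r \in I$.

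Suppose for contradiction that $r \neq 0$. Then $\LT(r) \in \LT(I) \subseteq \gen{\LT(I)}$. By the Gr\"{o}bner basis property, $\gen{\LT(I)} = \gen{\LT(b_1), \ldots, \LT(b_k)}$, so $\LT(r) \in \gen{\LT(b_1), \ldots, \LT(b_k)}$. This contradicts condition (ii) of Lemma~\ref{lem:reduction_ideal}, which says that a nonzero reduction has $\LT(r) \notin \gen{\LT(b_1), \ldots, \LT(b_k)}$. Therefore $r = 0$.

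The argument uses nothing about how the reduction was obtained, so it applies to every reduction. This is what makes the conclusion hold for \emph{all} reductions, even though reductions are not unique in general.

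The only point needing care is this: condition (ii) refers to the ideal generated by the leading terms, while we only know that $\LT(r)$ belongs to $\LT(I)$. The inclusion $\LT(I) \subseteq \gen{\LT(I)}$ bridges the two directly, so there is no real obstacle. The proof is essentially the argument already given in the proof of Corollary~\ref{cor:Hilbert}, applied with $B$ in place of the Dickson generators.
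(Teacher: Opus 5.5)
Your proof is correct and matches the paper's argument: any reduction $r$ of $f \in I$ lies in $I$, so a nonzero $r$ would satisfy $\LT(r) \in \gen{\LT(b_1), \ldots, \LT(b_k)}$, contradicting condition (ii) of the reduction lemma. Conversely, a zero reduction writes $f$ as an element of $\gen{B} \subseteq I$.
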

\begin{proof}
    Let $f \in I$ and let $r$ be a reduction of $f$ by $B$.
    Then $r \in I$, so $\LT(r) \in \gen{\LT(b_1), \ldots, \LT(b_k)}$ by definition of Gr\"{o}bner basis.
    Hence $r$ is zero by definition of reduction.
    Conversely if any reduction of $f$ by $B$ is zero then $f \in \gen{B} \subseteq I$.
\end{proof}

Note that Definition~\ref{def:Groebner} does not require the subset $B \subseteq I$ to \emph{generate} $I$, but it automatically follows from Theorem~\ref{thm:reduction_ideal} that a Gr\"{o}bner basis of $I$ generates $I$.

\subsection{Buchberger's algorithm}\label{subsec:prelim_Buchberger}
Thus far, Gr\"{o}bner bases have been defined without providing a way to compute them.
The classical method of computing a Gr\"{o}bner basis is \emph{Buchberger's algorithm}.
The general idea is to start with a set of generators $B$ of the ideal $I$, then repeatedly adjoin new elements to $B$ until it becomes a Gr\"{o}bner basis.
The adjoined elements are reductions of \emph{S-polynomials}.
%, and the algorithm terminates when all new S-polynomials reduce to zero.

Let $f, g$ be two polynomials in $\K[X_1, \ldots, X_n]$, and denote $\alpha \coloneqq \multideg(f), \beta \coloneqq \multideg(g)$.
Let $\gamma \coloneqq (\gamma_1, \ldots, \gamma_n)$ where $\gamma_i = \max(\alpha_i, \beta_i)$, so $X^{\gamma}$ is the least common multiple of $X^{\alpha}$ and $X^{\beta}$.
The \emph{S-polynomial} of $f$ and $g$ is defined as
\[
S(f, g) \coloneqq X^{\gamma - \alpha} \cdot \frac{f}{\LC(f)} - X^{\gamma - \beta} \cdot \frac{g}{\LC(g)}.
\]
For example, in the lexicographic monomial order, if $f = X_1^3 X_2^5 + X_2^5 + 1$ and $g = X_1^4 X_2^2 + 1$, then $\alpha = (3, 5)$, $\beta = (4, 2)$, $\gamma = (4, 5)$, and $S(f, g) = X_1 \cdot (X_1^3 X_2^5 + X_2^5 + 1) - X_2^3 \cdot (X_1^4 X_2^2 + 1)$.
Intuitively, the S-polynomial $S(f,g)$ describes how the leading terms of $f$ and $g$ can ``cancel out'' with each other.
See Figures~\ref{fig:Sf}-\ref{fig:Sfg} for illustration.

\begin{figure}[h!]
    %for 10 page format use b! and move after definition of S(f,g)
    \centering
    \begin{minipage}[t]{.30\textwidth}
        \centering
        \includegraphics[width=0.7\textwidth,height=0.8\textheight,keepaspectratio, trim={6.27cm 1.1cm 5.84cm 1.05cm},clip]{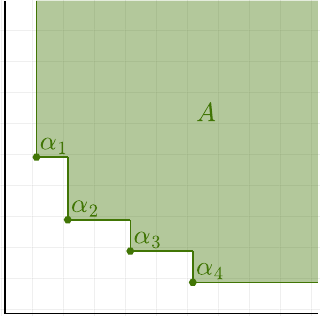}
        \caption{monomial ideal and its generators}
        \label{fig:monomial}
    \end{minipage}
    \hfill
    \begin{minipage}[t]{.21\textwidth}
        \centering
        \includegraphics[width=\textwidth,height=0.8\textheight,keepaspectratio, trim={6.27cm 1.1cm 5.84cm 1.05cm},clip]{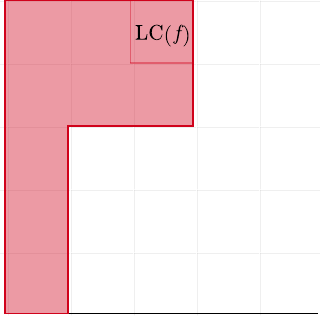}
        \caption{support of a polynomial $f$}
        \label{fig:Sf}
    \end{minipage}
    \hfill
    \begin{minipage}[t]{.21\textwidth}
        \centering
        \includegraphics[width=\textwidth,height=0.8\textheight,keepaspectratio, trim={6.27cm 1.1cm 5.84cm 1.05cm},clip]{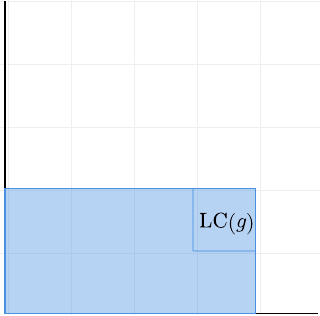}
        \caption{support of a polynomial $g$}
        \label{fig:Sg}
    \end{minipage}
    \hfill
    \begin{minipage}[t]{0.21\textwidth}
        \centering
        \includegraphics[width=\textwidth,height=0.8\textheight,keepaspectratio, trim={6.25cm 1.07cm 5.84cm 1.05cm},clip]{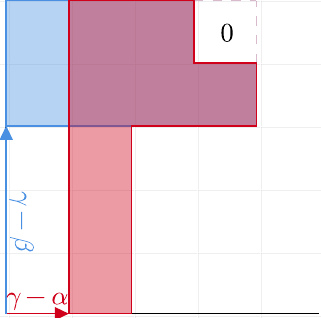}
        \caption{support of $S(f, g)$.}
        \label{fig:Sfg}
    \end{minipage}
\end{figure}

\begin{theorem}[{Buchberger's criterion~\cite[Chapter~2.6]{CoxLittleOShea2015}}]\label{thm:criterion}
    Let $I \subseteq \K[X_1, \ldots, X_n]$ be an ideal and $B = \{b_1, \ldots, b_k\}$ be a generating set of $I$.
    Then $B$ is a Gr\"{o}bner basis if and only if for every $i, j$, some reduction of $S(b_i, b_j)$ by $B$ is zero.
\end{theorem}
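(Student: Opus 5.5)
The forward direction is immediate from Theorem~\ref{thm:reduction_ideal}. Each $S(b_i,b_j)$ is a combination of $b_i$ and $b_j$, so it lies in $I$. If $B$ is a Gr\"{o}bner basis, every reduction of it by $B$ is zero. All the work is in the converse. Assume every $S(b_i,b_j)$ has some zero reduction by $B$. We must show that for every nonzero $f\in I$, $\LT(f)\in\gen{\LT(b_1),\ldots,\LT(b_k)}$.

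The plan is the standard minimal-representation argument. Since $B$ generates $I$, write $f=\sum_i h_i b_i$. Among all such representations, choose one minimizing
\[
\delta \coloneqq \max_i \multideg(h_i b_i).
\]
This is possible because $<$ is a well-order (Definition~\ref{def:order}(iii)). Clearly $\multideg(f)\leq\delta$. If equality holds, then $\multideg(f)=\multideg(h_ib_i)$ for some $i$. In that case $\LT(f)$ is a multiple of $\LT(b_i)$, and we are done.

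So suppose $\multideg(f)<\delta$, and derive a contradiction by producing a representation with smaller maximum. Split $f$ as
\[
f=\sum_{\multideg(h_ib_i)=\delta}\LT(h_i)b_i+\Big(\text{terms of multidegree}<\delta\Big).
\]
In the first sum the leading terms cancel. The key lemma to prove is about such cancellations. Suppose $p=\sum_j c_j X^{\alpha(j)} g_j$ where all $X^{\alpha(j)}g_j$ have the same multidegree $\delta$ and $\multideg(p)<\delta$. Then $p$ is a linear combination of the polynomials $X^{\delta-\gamma_{jl}}S(g_j,g_l)$, each of multidegree $<\delta$, where $\gamma_{jl}$ is the lcm exponent. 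This follows by a telescoping rewrite of $\sum_j c_j\LC(g_j)\cdot X^{\alpha(j)}g_j/\LC(g_j)$, using that the coefficients $c_j\LC(g_j)$ sum to zero.

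Next, apply the hypothesis. A zero reduction of $S(b_j,b_l)$ gives, by Lemma~\ref{lem:reduction_ideal}(i), an expression
\[
S(b_j,b_l)=\sum_m q_m b_m \quad\text{with}\quad \multideg(q_mb_m)\leq\multideg(S(b_j,b_l)).
\]
Multiplying by $X^{\delta-\gamma_{jl}}$ and using compatibility of $<$ with addition (Definition~\ref{def:order}(ii)), each summand then has multidegree at most $\multideg(X^{\delta-\gamma_{jl}}S(b_j,b_l))<\delta$.

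Substituting these expressions back into $f$ gives a representation of $f$ whose maximum multidegree is strictly below $\delta$. This contradicts minimality and finishes the converse. I expect the main obstacle to be the cancellation lemma, specifically setting up the telescoping sum correctly and verifying that each $S$-polynomial term has multidegree strictly less than $\delta$. The rest is bookkeeping with the properties of monomial orders.
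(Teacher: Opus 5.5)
Your proposal is correct and follows essentially the same route as the paper's sketch. The forward direction goes via Theorem~\ref{thm:reduction_ideal}; the converse splits into $\multideg(f)=\delta$ versus $\multideg(f)<\delta$, rewrites the cancelling leading sum as a combination of shifted S-polynomials, and substitutes their zero reductions. The differences are cosmetic: you take a representation with minimal $\delta$ rather than inducting on $\delta$ (the paper itself does this in Theorem~\ref{thm:criterion_tile}), and you spell out the telescoping argument that the paper leaves as ``not hard to show''.
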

\begin{proof}[Sketch of proof]
    If $B$ is a Gr\"{o}bner basis, then the reduction of $S(b_i,b_j) \in I$ by $B$ is trivial by Theorem~\ref{thm:reduction_ideal}.
    Conversely, suppose that every $S(b_i, b_j)$ has a trivial reduction.
    Take any $f \in I$, and we show $\LT(f) \in \gen{\LT(b_1), \ldots, \LT(b_k)}$.
    Write $f = \sum_{i = 1}^k h_i b_i$.
    Denote $\delta \coloneqq \max_{1 \leq i \leq k}(\multideg(h_i b_i))$, then $\multideg(f) \leq \delta$.

    \textbf{Case 1:} $\delta = \multideg(f)$. Then $\LT(f) = \sum_{\multideg(h_i b_i) = \delta} \LT(h_i) \LT(b_i) \in \gen{\LT(b_1), \ldots, \LT(b_k)}$.

    \textbf{Case 2:} $\delta > \multideg(f)$. Then we can write
    \[
    f = \sum_{\multideg(h_ib_i) = \delta} \LT(h_i) b_i + \sum_{\multideg(h_ib_i) = \delta} (h_i - \LT(h_i)) b_i + \sum_{\multideg(h_ib_i) < \delta} h_i b_i.
    \]
    The leading terms of the first sum cancel out, i.e., $\sum_{\multideg(h_ib_i) = \delta} \LT(h_i) \LT(b_i) = 0$.
    It is not hard to show that this means $\sum_{\multideg(h_ib_i) = \delta} \LT(h_i) b_i$ is equal to a linear combination of S-polynomials $\sum_{i, j} X^{\delta - \gamma_{ij}} S(b_i, b_j)$,
    where $\multideg(X^{\delta - \gamma_{ij}} S(b_i, b_j)) < \delta$.
    Since each $S(b_i,b_j)$ reduces to zero by $B$, we have $S(b_i,b_j)=\sum_{\ell = 1}^k a_{\ell} b_{\ell}$ where $\multideg(a_{\ell} b_{\ell}) \leq \multideg(S(b_i,b_j))$.
    Hence, replacing each $S(b_i, b_j)$ by $\sum_{\ell = 1}^k a_{\ell} b_{\ell}$, we can rewrite $\sum_{\multideg(h_ib_i) = \delta} \LT(h_i) b_i = \sum_{i, j} X^{\delta - \gamma_{ij}} S(b_i, b_j)$ as a linear combination $\sum_{i = 1}^k \tilde h_i b_i$ with $\multideg(\tilde h_i b_i) < \delta$.
    This yields an expression $f = \sum_{i = 1}^k h'_i b_i$ where $\max_{1 \leq i \leq k}(\multideg(h'_i b_i))$ is strictly smaller than $\delta$, allowing us to conclude by induction on $\delta$.
\end{proof}

\begin{theorem}[{Buchberger's algorithm~\cite[Chapter~2.7]{CoxLittleOShea2015}}]\label{thm:Buchberger_algorithm}
    There is an algorithm which, given as input a finite set of generators for an ideal $I$, computes a Gr\"{o}bner basis for $I$.
\end{theorem}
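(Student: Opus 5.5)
The algorithm I have in mind is the classical Buchberger loop, justified by Buchberger's criterion (Theorem~\ref{thm:criterion}) for correctness and by Dickson's lemma (Lemma~\ref{lem:Dickson}) for termination. Let $B_0$ be the given finite generating set of $I$, and drop zero elements. Put $B \coloneqq B_0$ and repeat the following. For every pair $b_i, b_j \in B$, form $S(b_i,b_j)$ and compute a reduction $r_{ij}$ of it by $B$ using the lead-reduction algorithm of Lemma~\ref{lem:reduction_ideal}. If every $r_{ij}$ is zero, stop and output $B$. Otherwise, adjoin all nonzero $r_{ij}$ to $B$ and repeat. Each step is effective, since field arithmetic in $\K$ is assumed computable and reduction is algorithmic.

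\emph{Invariants and correctness.} Every element adjoined lies in $I$. Indeed, $S(b_i,b_j) \in \gen{B} \subseteq I$, and $r_{ij} - S(b_i,b_j) \in \gen{B}$, so $r_{ij} \in I$. Moreover $B \supseteq B_0$ still generates $I$ at every stage. When the loop halts, every $S(b_i,b_j)$ has a zero reduction by $B$, and $B$ generates $I$. Theorem~\ref{thm:criterion} then says $B$ is a Gr\"obner basis of $I$.

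\emph{Termination.} This is the main point. Consider the monomial ideals $J_m \coloneqq \gen{\LT(b) \mid b \in B^{(m)}}$, where $B^{(m)}$ is the set after the $m$-th round. Suppose the $m$-th round adjoins a nonzero $r_{ij}$. By Lemma~\ref{lem:reduction_ideal}(ii), $\LT(r_{ij}) \notin J_{m-1}$, while $\LT(r_{ij}) \in J_m$. Hence $J_0 \subsetneq J_1 \subsetneq \cdots$ is strictly increasing for as long as the loop does not halt. Translate this into the sets of exponents $A_m = \{\alpha \mid X^\alpha \in J_m\}$, which satisfy $A_m + \N^n \subseteq A_m$. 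Their union $A$ also satisfies $A + \N^n \subseteq A$. By Dickson's lemma, $A = \{\alpha_1,\dots,\alpha_k\} + \N^n$ for finitely many $\alpha_\ell$. Each $\alpha_\ell$ lies in some $A_{m_\ell}$, so $A = A_M$ for $M = \max_\ell m_\ell$. Thus the chain stabilizes, contradicting strict increase beyond step $M$. Therefore the loop halts after finitely many rounds.

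The only subtle step is the one just described: using property (ii) of the reduction to get strict growth of the leading-term ideal, and then the ascending chain argument via Dickson's lemma. Everything else is bookkeeping that follows directly from the earlier results.
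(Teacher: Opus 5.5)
Your proposal is correct and follows the paper's proof sketch: the Buchberger loop adjoins nonzero reductions of S-polynomials, correctness comes from Buchberger's criterion (Theorem~\ref{thm:criterion}), and termination holds because $\gen{\LT(b) \colon b \in B}$ grows strictly while Dickson's lemma rules out an infinite strictly ascending chain of monomial ideals. You additionally spell out the ascending-chain step (passing to exponent sets and taking their union), which the paper leaves implicit.
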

\begin{proof}[Sketch of proof]
    The algorithm starts with the generating set $B$ of $I$.
    At each step, it computes a reduction of $S(b_i, b_j)$ by $B$ for all $b_i, b_j \in B$.
    If the reduction is not trivial then append it to $B$.
    At each step, $\gen{\LT(b) \colon b \in B}$ becomes strictly larger.
    By Dickson's lemma, the algorithm must terminate after finitely many steps.
    The resulting set $B$ is a Gr\"{o}bner basis by Theorem~\ref{thm:criterion}.
\end{proof}

\section{Standard bases for shift-stable subgroups}\label{sec:inv}
This section develops a notion of standard bases for shift-stable subgroups of $\GNn$, where $G$ is an arbitrary finite group.
%We keep a parallel labelling to Section~\ref{sec:prelim}.
In Section~\ref{subsec:basis_def} we formulate the definition of a standard basis, in close parallel to Section~\ref{subsec:prelim_GB}.
In Section~\ref{subsec:basis_comp}, we develop an algorithm for computing a standard basis: this is one of our main technical contributions. 
%We maintain the comparison between Section~\ref{subsec:basis_comp} and Section~\ref{subsec:prelim_Buchberger}: due to the non-commutativity of $G$, our algorithm is more involved than Buchberger's algorithm.
In Section~\ref{subsec:saturation} we define the problem of \emph{saturation} for stable subgroups, and give a solution inspired by the work of Bayer~\cite{Bayer1982} on ideal saturation.
In Section~\ref{subsec:eliminiation} we give an algorithm for \emph{variable elimination} for stable subgroups.
In Section~\ref{subsec:nested} we define and solve the \emph{nested subgroup membership} problem: this is another technical contribution of this section.
Nested subgroup membership combines saturation and variable elimination, and will be a crucial component in the resolution of Subgroup Membership in $G \wr \Z^n$ in Section~\ref{sec:membership}.

\subsection{Standard bases}\label{subsec:basis_def}
It turns out that the definitions and results in Section~\ref{subsec:prelim_GB} can be generalized to stable subgroups of $\GNn$ without major obstructions.
Let $G$ be a finite group and let $e$ denote its neutral element.
Recall that $\GNn$ is the group of finitely supported maps $f \colon \N^n \rightarrow G$, equipped with the shift action $f \mapsto \li{\beta} f,\; \beta \in \N^n$.
Owing to the visualization in Figures~\ref{fig:visualf}-\ref{fig:visualfg}, we will call elements of $\GNn$ \emph{tiles}.
Note that the (pointwise) product $f_1 f_2$ of two tiles $f_1, f_2 \in \GNn$ should be compared to the \emph{sum} ``$f_1 + f_2$'' in the polynomial setting, and the (pointwise) inverse $f_1^{-1}$ should be compared to ``$- f_1$'' in the polynomial setting.
In general, $f_1 f_2 \neq f_2 f_1$ when $G$ is non-abelian.
%, unless $\supp(f_1) \cap \supp(f_2) = \emptyset$.

A subgroup $I \leq \GNn$ is (shift-)stable, if $\alpha \in \N^n, f \in I \implies \li{\alpha}f \in I$.
For a set $S \subseteq \GNn$, denote
\[
S^{\pm} \coloneqq S \cup \{s^{-1} \mid s \in S\}.
\]
We denote by $\gen{S}$ the stable subgroup generated by $S$: it is the set of tiles of the form
\[
    \li{\alpha_1}s_1 \cdot \li{\alpha_2}s_2 \cdot \cdots \cdot \li{\alpha_k}s_k \in \GNn,
\]
where $k \in \N, s_1, \ldots, s_k \in S^{\pm}$, and $\alpha_1, \ldots, \alpha_k \in \N^n$.
The notions of multidegree, leading coefficient, and leading term naturally generalize from polynomials to tiles.

\begin{definition}
    Fix a monomial order $<$ on $\N^n$ and let $f \in \GNn$ be a tile.
    \begin{enumerate}[nosep, label=(\roman*)]
        %\item The \emph{support} of $f$ is the finite set $\supp(f) \coloneqq \{\alpha \in \N^n \mid f(\alpha) \neq e\} \subseteq \N^n$.
        \item The \emph{multidegree} of $f$ is the maximal element in $\supp(f)$ with respect to $<$, i.e., $\multideg(f) \coloneqq \max_{<}(\supp(f)) \in \N^n$.
        \item The \emph{leading coefficient} of $f$ is $\LC(f) \coloneqq f(\multideg(f)) \in G$.
        \item The \emph{leading term} of $f$ is the tile $\LT(f) \colon \N^n \rightarrow G$ that sends $\multideg(f)$ to $\LC(f)$, and all other elements of $\N^n$ to $e$.
    \end{enumerate}
    The neutral element of $\GNn$ is the \emph{trivial tile}: it sends every element of $\N^n$ to $e$.
\end{definition}

To generalize the notion of monomials from polynomials to tiles, we need to include the coefficient, since there is no distinguished element of $G$ that serves the purpose of $1 \in \K$.

\begin{definition}[Monomial stable subgroup]
    A tile $f \in \GNn$ is called a \emph{monomial} if $\supp(f)$ is a singleton.
    A stable subgroup $I \leq \GNn$ is called a \emph{monomial stable subgroup} if it is generated by a (possibly infinite) set of monomials.
\end{definition}

For $g \in G$, let $\delta_{g}$ denote the monomial $\N^n \rightarrow G$ that sends $(0, \ldots, 0)$ to $g$, and all other vectors to $e$.
Then for $\alpha \in \N^n$, the monomial $\li{\alpha}(\delta_g)$ sends $\alpha$ to $g$ and all other vectors to $e$.

Let $I$ be a monomial stable subgroup. 
For every $g \in G$, let $A_g$ denote the set of vectors $\alpha \in \N^n$ such that $\li{\alpha}(\delta_g) \in I$:
\[
A_g \coloneqq \{\alpha \in \N^n \mid \li{\alpha}(\delta_{g}) \in I\}.
\]
Then $A_g + \N^n \subseteq A_g$.
The family of sets $A_g, g \in G$, gives us a characterization of $I$.
This yields a version of Dickson's lemma for stable subgroups:

\begin{lemma}[Dickson's lemma for stable subgroups]\label{lem:Dickson_tile}
    Every monomial stable subgroup of $\GNn$ is generated by a finite number of monomials.
\end{lemma}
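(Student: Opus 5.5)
Fix a monomial stable subgroup $I \leq \GNn$, generated by a set $M$ of monomials. The plan is to show that $I$ coincides with the stable subgroup generated by the finitely many monomials obtained by applying the classical Dickson's lemma (Lemma~\ref{lem:Dickson}) to each of the finitely many sets $A_g$, $g \in G$.

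The first step is to describe $I$ cellwise. For $\alpha \in \N^n$, put $H_\alpha \coloneqq \{g \in G \mid \li{\alpha}(\delta_g) \in I\}$. This is a subgroup of $G$, because $\li{\alpha}(\delta_g)\,\li{\alpha}(\delta_h) = \li{\alpha}(\delta_{gh})$ and $\li{\alpha}(\delta_g)^{-1} = \li{\alpha}(\delta_{g^{-1}})$. Stability gives $H_\alpha \subseteq H_{\alpha+\beta}$ for all $\beta \in \N^n$. I then claim that
\[
I = \{f \in \GNn \mid f(\alpha) \in H_\alpha \text{ for all } \alpha\}.
\]
For the inclusion $\supseteq$, write $f$ as the product over $\alpha \in \supp(f)$ of the monomials $\li{\alpha}(\delta_{f(\alpha)})$. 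Their supports are disjoint, so the order of the factors is irrelevant, and each factor lies in $I$. For the inclusion $\subseteq$, observe that the right-hand side is a subgroup, since each $H_\alpha$ is a subgroup and multiplication is pointwise. It is stable because the $H_\alpha$ increase along $\N^n$. It also contains every $\li{\beta}m$ with $m \in M$: if $m = \li{\gamma}(\delta_g)$, then $\li{\beta}m = \li{\beta+\gamma}(\delta_g)$ is itself an element of $I$, so $g \in H_{\beta+\gamma}$. Since the right-hand side is a stable subgroup containing $M$, it contains $I$.

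With this description, $I$ is determined by the family $(H_\alpha)_\alpha$, and equivalently by the sets $A_g = \{\alpha \mid g \in H_\alpha\}$, which satisfy $A_g + \N^n \subseteq A_g$. Apply Lemma~\ref{lem:Dickson} to each $A_g$ to obtain finite sets $F_g \subseteq A_g$ with $A_g = F_g + \N^n$. Let $J$ be the stable subgroup generated by the finite set of monomials $\{\li{\alpha}(\delta_g) \mid g \in G,\ \alpha \in F_g\}$. Each generator lies in $I$, so $J \subseteq I$. Conversely, every monomial of $I$ has the form $\li{\alpha}(\delta_g)$ with $\alpha \in A_g$, so $\alpha = \alpha' + \beta$ with $\alpha' \in F_g$, and hence $\li{\alpha}(\delta_g) = \li{\beta}\li{\alpha'}(\delta_g) \in J$. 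By the product decomposition in the previous paragraph, every element of $I$ is a product of monomials of $I$, so $I \subseteq J$. Finiteness of the generating set uses that $G$ is finite.

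The main point requiring care is the cellwise characterization, specifically that $I$ contains no element beyond those whose coordinates lie in the $H_\alpha$. This needs the observation that the cellwise set is closed under the group operations and under shifts, so that the possible non-commutativity of $G$ causes no trouble. Everything else is a direct application of the classical Dickson's lemma.
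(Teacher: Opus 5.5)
Your proof is correct and follows the paper's argument: apply the classical Dickson's lemma to each $A_g$, take the resulting finitely many monomials, and use that every element of $I$ is a product of monomials lying in $I$. The only difference is that you derive this last fact from the cellwise description $I = \{f \mid f(\alpha) \in H_\alpha \text{ for all } \alpha\}$. The paper gets it directly, because shifts and inverses of the monomial generators are again monomials in $I$, so your detour is valid but not needed.
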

\begin{proof}
    Let $I \leq \GNn$ be a monomial stable subgroup.
    By Dickson's lemma (Lemma~\ref{lem:Dickson}), for every $g \in G$, the set $A_g$ can be written as $A_g = \{\alpha_{g,1}, \ldots, \alpha_{g,k}\} + \N^n$.
    We claim that the set of monomials 
    \[
    \textstyle
    B \coloneqq \bigcup_{g \in G} \big\{\li{\alpha_{g,1}}(\delta_g), \ldots, \li{\alpha_{g,k}}(\delta_g) \big\}
    \]
    generates $I$.
    Indeed, each monomial $\li{\alpha_{g,j}}(\delta_g)$ is in $I$, so $\gen{B} \subseteq I$.
    To show $I \subseteq \gen{B}$, take any $f \in I$.
    Since $I$ is generated by monomials, $f$ can be written as a product $t_1 t_2 \cdots t_s$, where each $t_i \in I$ is a monomial.
    Write $t_i = \li{\multideg(t_i)}(\delta_g)$ where $g = \LC(t_i)$.
    Then $\multideg(t_i) \in A_g = \{\alpha_{g,1}, \ldots, \alpha_{g,k}\} + \N^n$.
    Therefore $t_i \in \gen{\li{\alpha_{g,1}}(\delta_g), \ldots, \li{\alpha_{g,k}}(\delta_g)} \subseteq \gen{B}$.
    We conclude that $f = t_1 t_2 \cdots t_s \in \gen{B}$.
\end{proof}
%Then $A_g + \N^n \subseteq A_g$.
%Furthermore, for $g, h\in G$, if $\alpha \in A_g$ and $\alpha \in A_h$, then $\alpha \in A_{gh}$ because $\li{\alpha}(\delta_{gh}) = \li{\alpha}(\delta_{g}\delta_{h}) = \li{\alpha}(\delta_{g}) \cdot \li{\alpha}(\delta_{h}) \in I$.
%For each $\alpha \in \N^n$, the set $\varphi(\alpha) \coloneqq \{g \in G \mid \li{\alpha}(\delta_g) \in I\}$ is a subgroup of $G$.
%Indeed, if $g,h \in \varphi(\alpha)$, then $\li{\alpha}(\delta_{gh}) = \li{\alpha}(\delta_{g}\delta_{h}) = \li{\alpha}(\delta_{g}) \cdot \li{\alpha}(\delta_{h}) \in I$, so $gh \in \varphi(\alpha)$.
%Denote by $\mS(G)$ the set of all subgroups of $G$.
%For each monomial stable subgroup $I$, we obtain a map
%\[
%    \varphi \colon \N^n \rightarrow \mS(G), \quad \alpha \mapsto \varphi(\alpha) = \{g \in G \mid \li{\alpha}(\delta_g)\}.
%\]
%Furthermore, for all $\alpha, \beta \in \N^n$, we have $\varphi(\alpha) \leq \varphi(\alpha + \beta)$.
Similar to the classical setting, we can devise a reduction algorithm:

\begin{lemma}[Lead-reduction for stable subgroups]\label{lem:reduction}
Let $<$ be a monomial order on $\N^n$, and $F$ be a set of non-trivial elements in $\GNn$.
Then every $f \in \GNn$ can be written (by an algorithm) as
\begin{equation*}
f = \li{\alpha_1}f_1 \cdots \li{\alpha_m}f_m \cdot r,
\end{equation*}
where $m \in \N$, $f_i \in F^{\pm}, \alpha_i \in \N^n$, and
\begin{enumerate}[nosep, label=(\roman*)]
    \item $\multideg(\li{\alpha_i}f_i) \leq \multideg(f)$.
    \item $r$ is either trivial or $\LT(r) \notin \gen{\LT(h) \colon h \in F}$.
\end{enumerate}
\end{lemma}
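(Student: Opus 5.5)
The plan is to mimic the division algorithm. We run an iterative procedure on a current remainder $r$, starting with $r = f$ and $m = 0$, and we maintain two invariants. The first is that $f = \li{\alpha_1}f_1 \cdots \li{\alpha_m}f_m \cdot r$ with all $\multideg(\li{\alpha_i}f_i) \leq \multideg(f)$. The second is that $r$ is trivial or $\multideg(r) \leq \multideg(f)$.

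At each step, if $r$ is trivial or $\LT(r) \notin \gen{\LT(h) \colon h \in F}$ we stop. Otherwise we must peel off a factor that lowers $r$. The key point is to understand membership of a monomial $\li{\beta}(\delta_g)$ in the monomial stable subgroup $M \coloneqq \gen{\LT(h) \colon h \in F}$. A product of shifted monomials $\li{\gamma_j}\LT(h_j)^{\pm}$ evaluated at $\beta$ only sees the factors with $\multideg = \beta$. Hence $\li{\beta}(\delta_g) \in M$ if and only if $g$ lies in the subgroup $H_\beta \leq G$ generated by the elements $\LC(h)$ for $h \in F$ with $\multideg(h) \leq_{\text{div}} \beta$, meaning $\beta - \multideg(h) \in \N^n$. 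The converse holds because each such $\li{\beta - \multideg(h)}\LT(h) = \li{\beta}(\delta_{\LC(h)})$ lies in $M$.

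This set of relevant $h$ is finite and computable when $F$ is finite. If $F$ is infinite, we use that only finitely many subgroups of $G$ exist, or simply assume $F$ finite for the algorithmic claim. Since $G$ is finite, we can effectively write $g = \LC(r) = c_1 c_2 \cdots c_p$ with each $c_j \in \{\LC(h_j)^{\pm 1}\}$ for suitable $h_j$ with $\beta - \multideg(h_j) \in \N^n$, where $\beta = \multideg(r)$.

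Now set $u \coloneqq \li{\beta - \multideg(h_1)}h_1^{\pm} \cdots \li{\beta - \multideg(h_p)}h_p^{\pm}$. Each factor has multidegree exactly $\beta \leq \multideg(f)$, using property (ii) of monomial orders, which gives $\multideg(\li{\gamma}h) = \multideg(h) + \gamma$. Moreover $u(\beta) = g$ and $u$ vanishes above $\beta$. Replace $r$ by $r' \coloneqq u^{-1} r$ and append the factors of $u$ to the list. Then $f = (\text{old product}) \cdot u \cdot r'$, so the first invariant is preserved. Also $r'(\beta) = g^{-1}g = e$ and $r'$ is trivial at all points above $\beta$, so $r'$ is trivial or $\multideg(r') < \beta$.

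The step I expect to need the most care is termination. The sequence of multidegrees of the successive remainders is strictly decreasing in $<$, which is a well-order, so the process stops. This decreasing sequence argument is valid despite infinitely many elements lying below $\beta$. Effectiveness holds because each step is a finite computation in the finite group $G$ over finite supports. At termination, condition (ii) holds by the stopping rule, and condition (i) holds by the first invariant.
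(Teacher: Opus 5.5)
Your proof is correct and follows the same division-style argument as the paper. You repeatedly peel off a product of shifted elements of $F^{\pm}$ whose leading terms all sit at $\multideg(r)$ and multiply to $\LT(r)$, so the multidegree of the remainder strictly drops, and you terminate by the well-ordering. Your explicit criterion for $\li{\beta}(\delta_g) \in \gen{\LT(h) \colon h \in F}$ (that $g$ lies in the subgroup of $G$ generated by the $\LC(h)$ with $\beta - \multideg(h) \in \N^n$) is a more concrete form of the paper's step of discarding the monomial factors whose support differs from that of $\LT(r)$, and it makes the algorithmic content slightly more explicit.
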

\begin{proof}
    The algorithm starts with $r \coloneqq f$ and repeats the following.
    If $r$ is non-trivial and $\LT(r) \in \gen{\LT(h) \colon h \in F}$, then we can write $\LT(r) = t_1 t_2 \cdots t_s$, where each $t_i$ is a monomial of the form $\li{\alpha_i} \LT(f_i),\; f_i \in F^{\pm}$.
    Note that removing all the monomials $t_i$ whose support is different from $\LT(r)$ does not change the product.
    Hence we can suppose $\supp(t_i) = \supp(\LT(r))$ for all $i$. 
    Then 
    \[
    r = \li{\alpha_1} f_1 \li{\alpha_2} f_2 \cdots \li{\alpha_s} f_s \cdot r',
    \]
    for some $r' \in \GNn$, where $\multideg(\li{\alpha_i} f_i) = \multideg(r)$ for all $i$, and $\LT(r) = \LT(\li{\alpha_1} f_1 \li{\alpha_2} f_2 \cdots \li{\alpha_s} f_s)$.
    Therefore $\multideg(r') < \multideg(r)$.
    %Furthermore $\multideg(\li{\alpha_i} f_i) = \multideg(r)$ for all $i$.
    We replace $r$ by $r'$ and repeat the procedure.
    Since $\multideg(r)$ decreases after each repetition and $<$ is a well-order, the algorithm terminates after finitely many steps with either trivial $r$ or $\LT(r) \notin \gen{\LT(h) \colon h \in F}$.
\end{proof}

In Lemma~\ref{lem:reduction}, the tile $r$ is called a \emph{reduction of $f$ by $F$}. 
As in the classical case, $r$ is not unique in general.
For a stable subgroup $I$, define the set
$
\LT(I) \coloneqq \{\LT(f) \mid f \in I\},
$
and let $\gen{\LT(I)}$ be the stable subgroup it generates.
We obtain a Hilbert basis theorem for stable subgroups of $\GNn$:

\begin{corollary}\label{cor:Hilbert_stable}
    Every stable subgroup $I \leq \GNn$ is finitely generated.
\end{corollary}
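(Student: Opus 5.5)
We mirror the proof of Corollary~\ref{cor:Hilbert}, with Lemma~\ref{lem:Dickson_tile} in place of Dickson's lemma and Lemma~\ref{lem:reduction} in place of polynomial lead-reduction. First fix any monomial order $<$ on $\N^n$. The stable subgroup $\gen{\LT(I)}$ is by definition a monomial stable subgroup. By Lemma~\ref{lem:Dickson_tile} it is generated by finitely many monomials $m_1, \ldots, m_k$.

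Each $m_j$ lies in $\gen{\LT(I)}$, so it is a finite product of shifted leading terms $\li{\alpha}\LT(f)^{\pm 1}$ with $f \in I$. We collect all the (finitely many) $f$ appearing in these expressions into a finite set $F = \{f_1, \ldots, f_\ell\} \subseteq I$ of non-trivial tiles. The key point is that shifting commutes with taking leading terms: $\li{\alpha}\LT(f) = \LT(\li{\alpha}f)$, since $\alpha + \cdot$ preserves $<$. Hence each $m_j \in \gen{\LT(h) \colon h \in F}$. Since also $\LT(h) \in \LT(I)$ for $h \in F$, we obtain $\gen{\LT(I)} = \gen{\LT(h) \colon h \in F}$.

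Next we show $I = \gen{F}$. The inclusion $\gen{F} \subseteq I$ is clear because $I$ is stable and $F \subseteq I$. Conversely, take $f \in I$ and apply Lemma~\ref{lem:reduction} to write $f = \li{\alpha_1}g_1 \cdots \li{\alpha_m}g_m \cdot r$ with $g_i \in F^{\pm}$. Then
\[
r = (\li{\alpha_1}g_1 \cdots \li{\alpha_m}g_m)^{-1} f \in I,
\]
since $I$ is a stable subgroup containing $F$. If $r$ were non-trivial, we would have $\LT(r) \in \LT(I) \subseteq \gen{\LT(h) \colon h \in F}$, contradicting condition (ii) of Lemma~\ref{lem:reduction}. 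So $r$ is trivial and $f \in \gen{F}$.

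No real obstacle arises here. The only step needing care is the passage from a finite monomial generating set of $\gen{\LT(I)}$ to a finite subset $F \subseteq I$ whose leading terms generate the same stable subgroup. The generators from Lemma~\ref{lem:Dickson_tile} need not themselves be leading terms of single elements of $I$, which is why we expand them as products rather than picking one preimage each. This step is non-constructive, which is acceptable since the statement asserts only existence.
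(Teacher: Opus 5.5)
Your proof is correct and follows the paper's argument: apply Lemma~\ref{lem:Dickson_tile} to $\gen{\LT(I)}$, then use lead-reduction (Lemma~\ref{lem:reduction}) to show $I = \gen{F}$. The only difference is how you obtain $F$. The paper takes the Dickson generators directly in $\LT(I)$ and picks one preimage for each. This is legitimate: after discarding factors supported elsewhere (as in the proof of Lemma~\ref{lem:reduction}), any monomial $\li{\gamma}(\delta_g) \in \gen{\LT(I)}$ is a product $\LT(h_1)\cdots\LT(h_r)$ with $h_j \in I$ all of multidegree $\gamma$, so $\LT(h_1 \cdots h_r) = \li{\gamma}(\delta_g)$. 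Your expansion step, and the remark that the generators ``need not'' be leading terms, is therefore an unnecessary but harmless detour.
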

\begin{proof}
    Fix any monomial order.
    By Lemma~\ref{lem:Dickson_tile}, the monomial stable subgroup $\gen{\LT(I)}$ is generated by a finite number of monomials $t_1, \ldots, t_k \in \LT(I)$.
    For each $t_i$, take $f_i \in I$ such that $\LT(f_i) = t_i$.
    Then $\gen{\LT(I)} = \gen{\LT(f_1), \ldots, \LT(f_k)}$.
    For any $f \in I$, let $r \in I$ be its reduction by $\{f_1, \ldots, f_k\}$.
    Then $\LT(r) \in \LT(I)$ $\subseteq \gen{\LT(f_1), \ldots, \LT(f_k)}$, so $r$ is trivial by definition of reduction, hence $f \in \gen{f_1, \ldots, f_k}$. We conclude that $I = \gen{f_1, \ldots, f_k}$.
\end{proof}

The above proof leads to the definition of a standard basis for stable subgroups of $\GNn$:

\begin{definition}[Standard basis for stable subgroups]\label{def:standard}
    Fix a monomial order on $\N^n$.
    A finite subset $B = \{b_1, \ldots, b_k\}$ of a stable subgroup $I \leq \GNn$ is called a \emph{standard basis}, if
    \begin{equation*}
    \gen{\LT(I)} = \gen{\LT(b_1), \ldots, \LT(b_k)}.
    \end{equation*}
\end{definition}

\begin{theorem}[Membership problem in stable subgroups]\label{thm:membership}
    Let $B = \{b_1, \ldots, b_k\}$ be a standard basis of a stable subgroup $I \leq \GNn$ and $f \in \GNn$. 
    If $f \in I$ then all reductions of $f$ by $B$ are trivial.
    Conversely, if any reduction of $f$ by $B$ is trivial then $f \in I$.
\end{theorem}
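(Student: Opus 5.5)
The proof will mirror the proof of Theorem~\ref{thm:reduction_ideal}, using the lead-reduction of Lemma~\ref{lem:reduction} in place of the classical division algorithm. The argument has two directions. In each one the key fact is that the reduction remainder $r$ differs from $f$ by a product of shifts of elements of $B^{\pm}$.

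For the forward direction, suppose $f \in I$ and let $r$ be any reduction of $f$ by $B$. By Lemma~\ref{lem:reduction} we can write $f = \li{\alpha_1}f_1 \cdots \li{\alpha_m}f_m \cdot r$ with each $f_i \in B^{\pm}$. Since $B \subseteq I$ and $I$ is a stable subgroup, each $\li{\alpha_i}f_i$ lies in $I$. Hence
\[
r = \left(\li{\alpha_1}f_1 \cdots \li{\alpha_m}f_m\right)^{-1} f \in I .
\]
Now suppose, for contradiction, that $r$ is non-trivial. Then $\LT(r) \in \LT(I) \subseteq \gen{\LT(I)}$. By the definition of a standard basis (Definition~\ref{def:standard}), this gives $\LT(r) \in \gen{\LT(b_1), \ldots, \LT(b_k)}$. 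This contradicts condition (ii) of Lemma~\ref{lem:reduction}, so $r$ is trivial.

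For the converse, suppose some reduction $r$ of $f$ by $B$ is trivial. Then $f = \li{\alpha_1}f_1 \cdots \li{\alpha_m}f_m$, and this is an element of $\gen{B} \subseteq I$.

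The only point needing care is the order of multiplication, since $G$ may be non-abelian. In the decomposition of Lemma~\ref{lem:reduction}, $r$ appears as the rightmost factor, so it must be isolated by left-multiplying by the inverse of the product. This causes no difficulty because $I$ is closed under products and inverses. I do not expect any real obstacle; the argument is a direct transcription of the classical one.
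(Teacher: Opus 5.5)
Your proof is correct and follows the same argument as the paper. In the forward direction, $r \in I$ forces any non-trivial $r$ to satisfy $\LT(r) \in \gen{\LT(b_1), \ldots, \LT(b_k)}$, which contradicts condition (ii) of the lead-reduction; the converse holds because $f$ is then a product of shifts of elements of $B^{\pm}$. Isolating $r$ by left-multiplying with the inverse of the product, and treating the non-trivial case separately so that $\LT(r)$ is defined, only make explicit steps the paper leaves implicit.
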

\begin{proof}
    Let $f \in I$ and let $r$ be a reduction of $f$ by $B$.
    Then $r \in I$, so $\LT(r) \in \gen{\LT(b_1), \ldots, \LT(b_k)}$ by the definition of standard bases.
    Hence $r$ must be trivial by the definition of reduction.
    Conversely, if there is a trivial reduction of $f$ by $B$ then $f = \li{\alpha_1}f_1 \cdots \li{\alpha_m}f_m$ with $f_i \in B^{\pm}$, so $f \in I$.
\end{proof}

As in the classical setting, Definition~\ref{def:standard} does not require $b_1, \ldots, b_k$ to generate $I$, but it follows from Theorem~\ref{thm:membership} that a standard basis of $I$ generates $I$.

\subsection{Computing a standard basis}\label{subsec:basis_comp}
So far everything is similar to the classical case. This changes when we start to \emph{compute} a standard basis.
In the classical setting, an algorithm is constructed with Buchberger's criterion as the termination condition (Theorem~\ref{thm:criterion}).
Recall the key argument: if the leading coefficients of a sum of polynomials cancel out, then the subsum with the highest multidegree is a linear combination of S-polynomials.
However, this argument fails for non-abelian $G$.
The following examples illustrate some main obstacles:

\begin{enumerate}[nosep, label=(\arabic*)]
    \item Let $f = \li{\alpha_1}b_1 \li{\alpha_2}b_2 \li{\alpha_3}b_3$, with $\multideg(\li{\alpha_2}b_2) < \multideg(\li{\alpha_1}b_1) = \multideg(\li{\alpha_3}b_3)$.
    The leading factors in the product are $\li{\alpha_1}b_1$ and $\li{\alpha_3}b_3$.
    Naively, this means we should define $\li{\alpha_1}b_1 \li{\alpha_3}b_3$ as a (translation of) ``S-polynomial''.
    However, since $\li{\alpha_1}b_1 \li{\alpha_2}b_2 \li{\alpha_3}b_3 \neq (\li{\alpha_1}b_1 \li{\alpha_3}b_3) \li{\alpha_2}b_2$, we cannot group up the leading factors to recreate an ``S-polynomial'' as in the classical setting.
    
    \item To make matters worse, the cancellation of leading terms may be for trivial reasons and not due to ``S-polynomials''.
    For example, let $f = \li{\alpha_1}(b_1^{-1}) \li{\alpha_2}b_2 \li{\alpha_1}b_1$ with $\multideg(\li{\alpha_2}b_2) < \multideg(\li{\alpha_1}b_1)$, then the leading factors in the product cancel out for the trivial reason $\li{\alpha_1}(b_1^{-1}) \li{\alpha_1}b_1 = e^{\Z^n}$, and we cannot simplify it by applying reduction.
\end{enumerate}
\smallskip

To overcome these obstacles, notice the following.
In example~(1), we can write $\li{\alpha_1}b_1 \li{\alpha_2}b_2 \li{\alpha_3}b_3 = (\li{\alpha_1}b_1 \li{\alpha_3}b_3) \cdot \big(\li{\alpha_2}b_2)^{(\li{\alpha_3}b_3)}$, where $\big(\li{\alpha_2}b_2)^{(\li{\alpha_3}b_3)} = (\li{\alpha_3}b_3)^{-1} \li{\alpha_2}b_2 \li{\alpha_3}b_3$ is the (right-)conjugate of $\li{\alpha_2}b_2$ by $\li{\alpha_3}b_3$.
This allows us to proceed with reducing the product $\li{\alpha_1}b_1 \li{\alpha_3}b_3$.
In example~(2), $f = \li{\alpha_1}(b_1^{-1}) \li{\alpha_2}b_2 \li{\alpha_1}b_1$ is already a conjugate $\big(\li{\alpha_2}b_2)^{(\li{\alpha_1}b_1)}$.
This shows that in the stable subgroup setting, the ``correct'' way to define leading factors of a product needs to include \emph{conjugates} of the factors.
Hence, in order to generalize S-polynomials to the stable subgroup setting, we need to take into account both conjugation and multiplication.
This motivates our following definition of \emph{S-tiles}.

For tiles $f,g \in \GNn$, let $g^f$ denote the (right) conjugate of $g$ by $f$:
\[
g^f \coloneqq f^{-1}gf.
\]
It is easy to see that $\supp(g^f) = \supp(g)$.
For integers $x \leq y$, define $[x, y] \coloneqq \{x, x+1, \ldots, y\}$.
For $f \in \GNn$, define
\[
\|f\| \coloneqq \min\{d \in \N \mid \supp(f) \subseteq [0, d]^n\}.
\]
That is, $\|f\| \in \N$ is the side length of the smallest cube containing $0$ and $\supp(f)$.
For a set of tiles $B \subset \GNn$, we define an \emph{S-tile} associated to $B$ to be a tile of the form
\begin{equation}\label{eq:S_tile}
    (\li{\alpha_1}b_1)^{f_1} \cdot (\li{\alpha_2}b_2)^{f_2} \cdot \cdots \cdot (\li{\alpha_m}b_m)^{f_m},
\end{equation}
where $m \in \N$, and
\begin{enumerate}[nosep, label=(\roman*)]
    \item $b_1, \ldots, b_m \in B^{\pm}$,
    \item $\alpha_1, \ldots, \alpha_m \in [0,\, 2\max_{b \in B} \|b\|]^n$,
    \item $f_1, \ldots, f_m \in \gen{B}$.
\end{enumerate}
\smallskip

Intuitively, S-tiles describe how conjugates of tiles in $B^{\pm}$ can interact with each other within a radius of $2\max_{b \in B} \|b\|$.
See Figures~\ref{fig:Stilef}-\ref{fig:Stilefgh} for illustration.
Note that we do not \emph{a priori} give a bound on the product length $m$, nor require the leading terms of $(\li{\alpha_i}b_i)^{f_i}$ to align or cancel.

\begin{figure}[h!]
    %for 10 page format move below enumerate and remove smallskip
    \centering
    \begin{minipage}[t]{.20\textwidth}
        \centering
        \includegraphics[width=\textwidth,height=0.8\textheight,keepaspectratio, trim={6.3cm 1.1cm 5.84cm 1.05cm},clip]{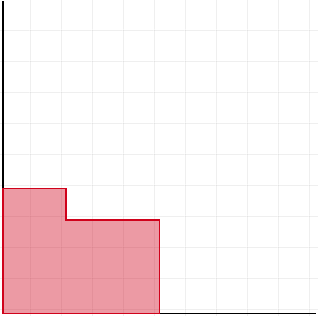}
        \caption{tile $b_1$}
        \label{fig:Stilef}
    \end{minipage}
    \hfill
    \begin{minipage}[t]{.20\textwidth}
        \centering
        \includegraphics[width=\textwidth,height=0.8\textheight,keepaspectratio, trim={6.3cm 1.1cm 5.84cm 1.05cm},clip]{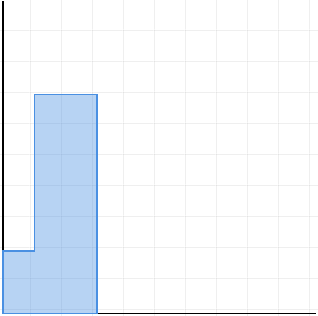}
        \caption{tile $b_2$}
        \label{fig:Stileg}
    \end{minipage}
    \hfill
    \begin{minipage}[t]{.20\textwidth}
        \centering
        \includegraphics[width=\textwidth,height=0.8\textheight,keepaspectratio, trim={6.3cm 1.1cm 5.84cm 1.05cm},clip]{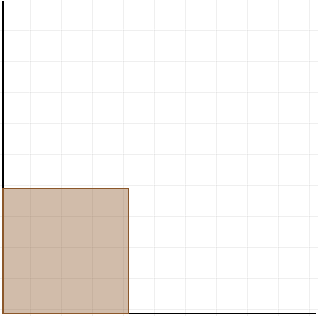}
        \caption{tile $b_3$}
        \label{fig:Stileh}
    \end{minipage}
    \hfill
    \begin{minipage}[t]{0.30\textwidth}
        \centering
        \includegraphics[width=0.67\textwidth,height=0.8\textheight,keepaspectratio, trim={6.3cm 1.1cm 5.84cm 1.05cm},clip]{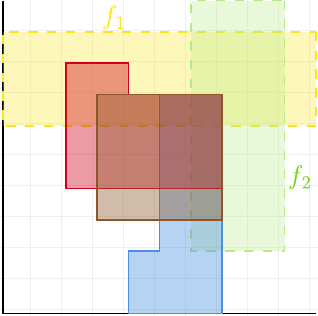}
        \captionsetup{justification=centering}
        \caption{an S-tile \\ $(\li{(2, 4)}b_1)^{f_1} \cdot (\li{(4, 0)}b_2)^{f_2} \cdot (\li{(3, 3)}b_3)$.}
        \label{fig:Stilefgh}
    \end{minipage}
\end{figure}

By definition, S-tiles are the elements in the (non-stable) group generated by $(\li{\alpha}b)^{f} \in \GNn$ where $b \in B, \alpha \in [0,\, 2 \max_{b \in B} \|b\|]^n$, and $f \in \gen{B}$.
When $B$ is finite, the number of S-tiles associated to $B$ is finite:
this is because 
\[
\supp\left((\li{\alpha}b)^{f}\right) = \supp(\li{\alpha}b) = \alpha + \supp(b) \subseteq \alpha + [0, \max_{b \in B} \|b\|]^n \subseteq [0,\, 3 \max_{b \in B} \|b\|]^n,
\]
so all S-tiles associated to $B$ are elements of the finite group $G^{[0,\, 3 \max_{b \in B} \|b\|]^n} \leq \GNn$.

%\noindent A crucial observation is that the number of distinct S-tiles is finite if the set $B$ is finite.
%Indeed, each $b_i \in B^{\pm}$ has only finitely many distinct conjugations since $\supp(b_i)$ is finite.
%Hence, there are only finitely many distinct tiles of the form $(\li{\alpha_i}b_i)^{f_i}, b_i \in B^{\pm}, \|\alpha_i\| \leq 2\max_{b \in B} \|b\|$, so the subgroup (not shift-stable subgroup) they generate is therefore also finite.
%In Theorem~\ref{thm:compute_tile_basis} below, 
%It is not difficult to show that the set of S-tiles can be computed from the set $B$ (see Theorem~\ref{thm:compute_tile_basis} below).
We now prove a ``Buchberger's criterion'' in the stable subgroup setting, where we use S-tiles to provide a test for whether a set of generators is a standard basis:

\begin{theorem}\label{thm:criterion_tile}
    Let $I \leq \GNn$ be a stable subgroup and $B$ be a finite set of non-trivial tiles generating $I$.
    Fix a monomial order on $\N^n$.
    Then $B$ is a standard basis if and only if every S-tile associated to $B$ has a trivial reduction by $B$.
\end{theorem}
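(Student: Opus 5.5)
The forward direction is immediate. Every S-tile associated to $B$ is a product of conjugates of elements of $\gen{B}=I$ by elements of $I$, so it lies in $I$. By Theorem~\ref{thm:membership} all of its reductions by $B$ are trivial. For the converse, assume every S-tile has a trivial reduction by $B$, and let $D \coloneqq \max_{b\in B}\|b\|$. Take a non-trivial $f \in I$. I will show $\LT(f)\in\gen{\LT(b)\colon b\in B}$, which is exactly the standard basis property.

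\textbf{Setup and induction.} I write $f$ as a product of conjugates
\[
f=\prod_{i=1}^k (\li{\beta_i}b_i)^{g_i},\qquad b_i\in B^{\pm},\ \beta_i\in\N^n,\ g_i\in I.
\]
Such a representation exists, for instance with every $g_i$ trivial. Since $\supp((\li{\beta_i}b_i)^{g_i})=\beta_i+\supp(b_i)$, each factor has the well-defined multidegree $\beta_i+\multideg(b_i)$. Let $\delta$ be the maximum of these multidegrees. Among all representations of $f$ I choose one minimizing $\delta$, and then minimizing the number $N$ of factors of multidegree $\delta$. I use induction on the pair $(\delta, N)$ in lexicographic order, which is a well-order.

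\textbf{Case $\delta=\multideg(f)$.} The coefficient $\LC(f)$ is a product of conjugates of the values $\LC(b_i)$ coming from the top factors. Conjugation in $\GNn$ is pointwise, so a top factor evaluated at $\delta$ equals $\LC(b_i)^{g_i(\delta)}$, where $g_i(\delta)$ is the value of $g_i$ at $\delta$. This is also the value at $\delta$ of the monomial $\bigl(\li{\beta_i}\LT(b_i)\bigr)^{\li{\beta_i}\LT(b_i)^{\,m}}$ for a suitable power, or more simply, $\gen{\LT(b)}$ is a stable subgroup whose values at a fixed point form a subgroup $H_\delta$ that I expect to be normal in $G$. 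If normality fails, I instead use the monomial set $A_g$ description from Lemma~\ref{lem:Dickson_tile} directly. Either way $\LT(f)\in\gen{\LT(B)}$.

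\textbf{Case $\delta>\multideg(f)$.} The top factors cancel at $\delta$. I move all top factors to the front. Each swap $xy=y\,x^{y}$ replaces a conjugator $g_i$ by another element of $I$, so the lower factors stay lower conjugates. This gives $f=Q\cdot R$, where $Q$ is the product of the top factors and $R$ collects the lower ones. Next I pick a base point $\gamma\in\N^n$ with $\beta_i-\gamma\in[0,2D]^n$ for all top factors; my candidate is the componentwise $\gamma=\max(0,\delta-D)$, or $\delta-2D$ if needed. I then aim to show $Q=\li{\gamma}S$ for an S-tile $S$. By hypothesis $S$ reduces to the trivial tile with factors of multidegree at most $\multideg(S)<\delta-\gamma$. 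Shifting by $\gamma$ rewrites $Q$ with all factors of multidegree $<\delta$. Substituting this into $f=QR$ lowers $\delta$, or at least $N$, and the induction finishes the proof.

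\textbf{Main obstacle.} The key difficulty is realizing $Q$ as a $\gamma$-shift of an S-tile. The S-tile definition requires the conjugators to lie in $\gen{B}$ \emph{before} shifting, whereas $Q$ carries conjugators $g_i\in I$ that need not be $\gamma$-shifts of elements of $I$. My plan is to use that $(\li{\beta_i}b_i)^{g_i}$ depends only on the restriction of $g_i$ to $\beta_i+\supp(b_i)$. I would expand each $g_i$ as a product of shifted generators and discard every factor whose support misses the relevant box. The hope is that the surviving factors have shifts that are $\ge\gamma$ componentwise, so the truncated conjugator is $\li{\gamma}g_i'$ with $g_i'\in\gen{B}$. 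The radius $2D$ in the S-tile definition is presumably chosen to make this bookkeeping close. If it does not close directly, I would first strengthen the induction to representations whose conjugators are themselves words in generators shifted near the relevant factor, and then prove that such normal forms always exist.
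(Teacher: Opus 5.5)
Your forward direction and the overall skeleton (minimal $\delta$, collecting the top factors into $Q$, writing $Q=\li{\gamma}S$ for an S-tile $S$ and using its trivial reduction) agree with the paper. However, the case $\delta=\multideg(f)$ has a genuine gap. There you try to get $\LT(f)\in\gen{\LT(b)\colon b\in B}$ from $\LC(f)=\prod\LC(b_i)^{g_i(\delta)}$ without using the S-tile hypothesis at all, and no such argument can work. The group $H_\delta$ of values at $\delta$ is generated by the $\LC(b)$ with $\delta-\multideg(b)\in\N^n$. It need not be normal in $G$, and the conjugates $\LC(b_i)^{g_i(\delta)}$ need not lie in it.

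For a concrete example, take $n=1$, $G=S_3$, $b_1=\delta_{(1\,2)}$, and $b_2$ with $b_2(0)=(1\,2\,3)$, $b_2(1)=(1\,2)$. Then $f=b_1^{b_2}$ is the monomial at $0$ whose value is a transposition other than $(1\,2)$. The representation $f=(\li{0}b_1)^{b_2}$ has $\delta=0=\multideg(f)$, so it is minimal and you are in your first case. Yet $\LT(f)\notin\gen{\LT(b_1),\LT(b_2)}$, since every tile in the latter takes values in $\{e,(1\,2)\}$. So your first case, as written, proves a false statement; here the S-tile $b_1^{b_2}$ indeed has no trivial reduction. Invoking the sets $A_g$ of Lemma~\ref{lem:Dickson_tile} does not help either, since they merely restate membership in $\gen{\LT(b)\colon b\in B}$.

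The repair is to perform the realization $Q=\li{\gamma}S$ before splitting into cases, as the paper does, and to use the trivial reduction of $S$ in both cases. Every factor of $R$ has multidegree below $\delta$, so when $\multideg(f)=\delta$ you get $Q(\delta)=f(\delta)\neq e$ and $\LT(f)=\LT(Q)$. Write $S=\li{\alpha'_1}b'_1\cdots\li{\alpha'_k}b'_k$ with every $\multideg(\li{\alpha'_j}b'_j)\le\multideg(S)=\delta-\gamma$. Then $\LT(f)$ is the product of the monomials $\li{\alpha'_j+\gamma}\LT(b'_j)$ over those $j$ with $\multideg(\li{\alpha'_j+\gamma}b'_j)=\delta$, hence lies in $\gen{\LT(b)\colon b\in B}$.

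The realization step you left as a hope does close, but with $\gamma=\max(0,\delta-2D)$ componentwise, not $\max(0,\delta-D)$. Delete from each conjugator $g_i=\li{\alpha_{i1}}b_{i1}\,\li{\alpha_{i2}}b_{i2}\cdots$ the factors whose support misses $\supp(\li{\beta_i}b_i)$; they commute with the partially conjugated tile, whose support never changes. Every surviving factor then meets $\beta_i+[0,D]^n$, so $\alpha_{ij}\ge\beta_i-D\ge\delta-2D$ componentwise. Meanwhile $\beta_i-\gamma\in[0,2D]^n$ follows from $\delta-\beta_i=\multideg(b_i)\in[0,D]^n$. The paper instead takes $\gamma$ to be the componentwise minimum of all surviving $\alpha_{ij}$ and all $\beta_i$, which works for the same reason.
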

\begin{proof}
    If $B$ is a standard basis, then by Theorem~\ref{thm:membership} every S-tile associated to $B$ reduces to the trivial tile by $B$.
    
    For the other implication, suppose that every S-tile associated to $B$ has a trivial reduction by $B$.
    We will show that every $f \in I$ satisfies $\LT(f) \in \gen{\LT(b) \colon b \in B}$.
    
    Take $f \in I = \gen{B}$.
    Consider a representation of $f$ as a product
    \begin{equation}\label{eq:f_product_of_conjugates}
    f = (\li{\beta_1}b_1)^{f_1} \cdot (\li{\beta_2}b_2)^{f_2} \cdot \cdots \cdot (\li{\beta_k}b_k)^{f_k},
    \end{equation}
    where $b_i \in B^{\pm}$, $\beta_i \in \N^n$, and $f_i \in \gen{B}$.
    Such a representation always exists, for example by taking all $f_i$ to be the trivial tile.
    Define
    \[
    \delta \coloneqq \max_{1 \leq \ell \leq m} \multideg(\li{\beta_\ell}b_\ell) = \max_{1 \leq \ell \leq m} \multideg(\li{\beta_\ell}b_\ell)^{f_\ell}.
    \]
    Among all possible representations of $f$ in the form~\eqref{eq:f_product_of_conjugates}, pick one where $\delta$ is minimal with respect to the monomial order $<$.
    The minimal $\delta$ exists because $<$ is a well-order.
    Let $u_1, \ldots, u_s$ be all the factors $(\li{\beta_\ell}b_\ell)^{f_{\ell}}$ in Equation~\eqref{eq:f_product_of_conjugates} where $\multideg(\li{\beta_\ell}b_\ell)$ attains $\delta$.
    We can write 
    \[
    f = t_{01} \cdots t_{0j_0} u_1 t_{11} \cdots t_{1j_1} u_2 \cdots u_s t_{s1} \cdots t_{sj_s},
    \]
    where $t_{ij}, u_i$ are tiles of the form $(\li{\beta_{\ell}}b_{\ell})^{f_{\ell}}$, and
    \[
    \multideg(u_i) = \delta > \multideg(t_{ij}).
    \]
    Moving all the $u_i$'s to the left using conjugation, we have 
    \begin{equation}\label{eq:f_collect_LT}
    f = u_1 u_2 \cdots u_s \cdot \underbrace{\left(t_{01}^{u_1 u_2 \cdots u_s} \cdots t_{0 i_0}^{u_1 u_2 \cdots u_s} \cdot t_{11}^{u_2 \cdots u_s} \cdots t_{ i_1}^{u_2 \cdots u_s} \cdots t_{s1} \cdots t_{sj_s} \right)}_{\text{each term has } \multideg < \delta}.
    \end{equation}
    %We now write 
    %$
    %u_i = (\li{\beta_i}b_i)^{f_i}
    %$
    %where $b_i \in B^{\pm}, \beta_i \in \N, f_i \in \gen{B}$, these are tiles with multidegree $\delta$.

    %This is not evident because in $u_i = (\li{\beta_i}b_i)^{f_i}$, the conjugation by $f_i$ is applied \emph{after} the translation by $\beta_i$.
    We now proceed in two steps.
    In the first step we will express the leading product $u_1 u_2 \cdots u_s$ as a translation of an S-tile.
    In the second step we proceed in two cases according to whether $\multideg(f)=\delta$ or $\multideg(f) < \delta$, similar to the proof of Theorem~\ref{thm:criterion} in the classical setting.
    \smallskip

    \textbf{Step 1.} 
    We express $u_1 u_2 \cdots u_s$ as the translation of an S-tile.
    For each $i$, write 
    $
    u_i = (\li{\beta_i}b_i)^{f_i}
    $
    where $b_i \in B^{\pm}, \beta_i \in \N^n, f_i \in \gen{B}$.
    We will find a vector $\gamma \in \N^n$ satisfying 
    \[
    \beta_i - \gamma \in [0,\, 2 \max_{b \in B} \|b\|]^n,\quad i = 1, \ldots, s,
    \]
    as well as tiles $f'_1, \ldots, f'_s \in \gen{B}$, such that 
    \[
    u_i = (\li{\beta_i}b_i)^{f_i} = \li{\gamma} {\Big((\li{\beta_i -\gamma}b_i)^{f'_i} \Big)}, \quad i = 1, \ldots, s.
    \]
    Then each $(\li{\beta_i -\gamma}b_i)^{f'_i}$ will be an S-tile associated to $B$.
    See Figures~\ref{fig:conjugate}, \ref{fig:conjugateT}, \ref{fig:conjugateS} for an illustration.
    %A naive attempt to write $(\li{\beta_i}b_i)^{f_i}$ as a $\gamma$-translation of an S-tile would be to take some $\gamma \in \N^n$ not too far from $\delta$, and write
    %\[
    %(\li{\beta_i}b_i)^{f_i} = f_i^{-1} \cdot \li{\beta_i}b_i \cdot f_i = \li{\gamma} {\big(\li{-\gamma}f_i^{-1} \cdot \li{\beta_i - \gamma}b_i \cdot \li{-\gamma}f_i \big)} = \li{\gamma} {\Big((\li{\beta_i - \gamma}b_i)^{(\li{- \gamma}f_i)} \Big)}.
    %\]
    %Note that $(\beta_i - \gamma) + \multideg(b_i) = \multideg(\li{\beta_i}b_i) - \gamma = \delta - \gamma$, so if we take $\gamma$ such that $\|\delta - \gamma\| \leq 2 \max_{b \in B} \|b\|$, then condition~(iii) in the definition~\eqref{eq:S_tile} of S-tiles will be satisfied.
    %However, the problem with this attempt is that $\li{- \gamma}f_i$ is not well-defined because the vector $ - \gamma$ has negative entries.
    %Nevertheless, this can be remedied by finding an equivalent conjugator $f'_i$:

\begin{figure}[b!]
    \centering
    \begin{minipage}[t]{.27\textwidth}
        \centering
        \includegraphics[width=\textwidth,height=0.8\textheight,keepaspectratio, trim={6.3cm 1.1cm 5.84cm 1.05cm},clip]{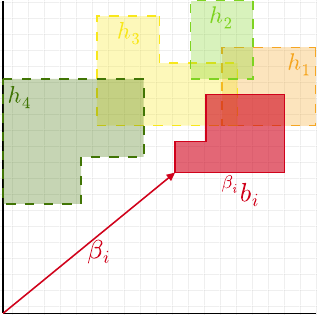}
        \captionsetup{justification=centering}
        \caption{the tile $u_i = (\li{\beta_i}b_i)^{f_i}$, where $f_i = h_1 h_2 h_3 h_4$.}
        \label{fig:conjugate}
    \end{minipage}
    \hfill
    \begin{minipage}[t]{.27\textwidth}
        \centering
        \includegraphics[width=\textwidth,height=0.8\textheight,keepaspectratio, trim={6.3cm 1.1cm 5.84cm 1.05cm},clip]{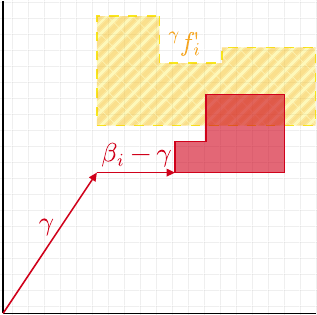}
        \captionsetup{justification=centering}
        \caption{the tile $\li{\gamma} {\Big((\li{\beta_i -\gamma}b_i)^{f'_i} \Big)}$, equal to $u_i$}
        \label{fig:conjugateT}
    \end{minipage}
    \hfill
    \begin{minipage}[t]{0.27\textwidth}
        \centering
        \includegraphics[width=\textwidth,height=0.8\textheight,keepaspectratio, trim={6.3cm 1.1cm 5.84cm 1.05cm},clip]{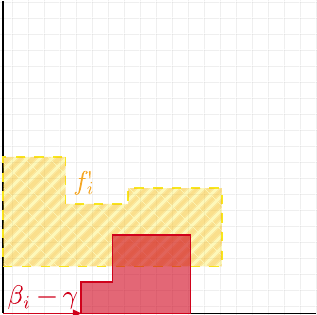}
        \captionsetup{justification=centering}
        \caption{the S-tile $(\li{\beta_i -\gamma}b_i)^{f'_i}$.}
        \label{fig:conjugateS}
    \end{minipage}
\end{figure}

    For an index $i \in \{1, \ldots, s\}$, write 
    \begin{equation*}
    f_i = \li{\alpha_{i1}} b_{i1} \cdot \li{\alpha_{i2}} b_{i2} \cdot \cdots
    \end{equation*}
    where $b_{ij} \in B^{\pm}, \alpha_{ij} \in \N^n$.
    So
    \[
    u_i = (\li{\beta_i}b_i)^{f_i} = (\li{\beta_i}b_i)^{\li{\alpha_{i1}} b_{i1} \cdot \li{\alpha_{i2}} b_{i2} \cdot \cdots}.
    \]
    %The key observation is that, without changing $(\li{\beta_i}b_i)^{f_i}$, we can suppose without loss of generality that $\supp(\li{\alpha_{ij}} b_{ij}) \cap \supp(\li{\beta_i}b_i)) \neq \emptyset$ for all $j$.
    Since any factor $\li{\alpha_{ij}} b_{ij}$ whose support is disjoint from
    $\supp((\li{\beta_i}b_i)^{\li{\alpha_{i1}} b_{i1}  \cdots \li{\alpha_{i(j-1)}} b_{i(j-1)}}) = \supp(\li{\beta_i}b_i)$ acts trivially on it by conjugation, we may delete all such factors from $f_i = \li{\alpha_{i1}} b_{i1} \cdot \li{\alpha_{i2}} b_{i2} \cdot \cdots$, and assume without loss of generality that 
    \[
    \supp(\li{\alpha_{ij}} b_{ij}) \cap \supp(\li{\beta_i}b_i) \neq \emptyset, \quad j = 1, 2, \ldots.
    \]
    Take any $\lambda_{ij} \in \supp(\li{\alpha_{ij}} b_{ij}) \cap \supp(\li{\beta_i}b_i))$, we obtain
    \begin{align}\label{eq:da}
    \delta - \alpha_{ij} & = (\delta - \lambda_{ij}) + (\lambda_{ij} - \alpha_{ij}) \nonumber\\
    & \in [- \|b_i\|,\, \|b_i\| ]^n + (\lambda_{ij} - \alpha_{ij}) && \text{(since $\delta, \lambda_{ij} \in \supp(\li{\beta_i}b_i)$)} \nonumber\\
    & \subseteq [- \|b_i\|,\, \|b_i\| ]^n + [0,\, \|b_{ij}\|]^n  && \text{(since $\lambda_{ij} \in \supp(\li{\alpha_{ij}} b_{ij})$)} \nonumber\\
    & \subseteq [- \max_{b \in B} \|b\|,\, 2 \max_{b \in B} \|b\| ]^n.
    \end{align}
    Also,
    \begin{equation}\label{eq:db}
    \delta - \beta_i = \multideg(\li{\beta_i}b_i) - \beta_i = \multideg(b_i) \in [0,\, 2 \max_{b \in B} \|b\| ]^n.
    \end{equation}
    We then take $\gamma$ to be the coordinate-wise minimum of all $\alpha_{ij}$ and $\beta_i$, then $\alpha_{ij} -\gamma, \beta_i - \gamma \in \N^n$ for all $i, j$.
    Define
    \[
    f'_i \coloneqq \li{(\alpha_{i1} -\gamma)} b_{i1} \cdot \li{(\alpha_{i2} -\gamma)} b_{i2} \cdot \cdots.
    \]
    Then $f_i = \li{\gamma}f'_i$, and
    \begin{equation}\label{eq:u_s}
    u_i = (\li{\beta_i}b_i)^{f_i} = f_i^{-1} \cdot \li{\beta_i}b_i \cdot f_i = \li{\gamma} {\big({f'_i}^{-1} \cdot \li{\beta_i - \gamma}b_i \cdot f'_i \big)} = \li{\gamma} {\Big((\li{\beta_i -\gamma}b_i)^{f'_i} \Big)}.
    \end{equation}
    Here, $\beta_i - \gamma \in \N^n$ is coordinate-wise smaller than $\delta - \gamma$ (by~\eqref{eq:db}).
    Since $\delta - \gamma$ is the coordinate-wise maximum of all $\delta - \alpha_{ij}$ and $\delta - \beta_i$, each coordinate of $\delta - \gamma$ is smaller than $2 \max_{b \in B} \|b\|$ (by~\eqref{eq:da} and \eqref{eq:db}).
    Therefore
    \[
    \beta_i - \gamma \in [0,\, 2 \max_{b \in B} \|b\| ]^n, \quad i = 1, \ldots, s.
    \]
    So each $(\li{\beta_i -\gamma}b_i)^{f'_i}$ is an S-tile associated to $B$.

    Setting $\alpha_i \coloneqq \beta_i - \gamma$, Equation~\eqref{eq:u_s} allows us to write the term $u_1 u_2 \cdots u_s$ as a translation of an S-tile:
    \begin{align}\label{eq:f_product_S_tile}
        f & = u_1 u_2 \cdots u_s \cdot \left(t_{01}^{u_1 u_2 \cdots u_s} \cdots t_{0 i_0}^{u_1 u_2 \cdots u_s} \cdot t_{11}^{u_2 \cdots u_s} \cdots t_{ i_1}^{u_2 \cdots u_s} \cdots t_{s1} \cdots t_{sj_s} \right) \nonumber\\
        & = \li{\gamma} {\Big((\li{\beta_1 -\gamma}b_1)^{f'_1} \Big)} \cdot \li{\gamma} {\Big((\li{\beta_2 -\gamma}b_2)^{f'_2} \Big)} \cdots \li{\gamma} {\Big((\li{\beta_s -\gamma}b_s)^{f'_s} \Big)} \cdot \underbrace{\big(\cdots \cdots\big)}_{\text{each} \multideg < \delta} \nonumber\\
        & = \li{\gamma} {\Big((\li{\alpha_1}b_1)^{f'_1} \cdot (\li{\alpha_2}b_2)^{f'_2} \cdot \cdots \cdot  (\li{\alpha_s}b_s)^{f'_s} \Big)} \cdot \underbrace{\big(\cdots \cdots\big)}_{\text{each} \multideg < \delta},
    \end{align}
    %Here, each $\alpha_i \in \N^n$ satisfies 
    %\[
    %\|\alpha_i\| \leq \|\alpha_i + \multideg(b_i)\| = \|\beta_i - \gamma + \multideg(b_i)\| = \|-\gamma + \multideg(\li{\beta_i}b_i)\| = \|\delta - \gamma\| \leq 2\max_{b \in B} \|b\|.
    %\]
    where $(\li{\alpha_1}b_1)^{f'_1} \cdot (\li{\alpha_2}b_2)^{f'_2} \cdot \cdots \cdot (\li{\alpha_s}b_s)^{f'_s}$ is an S-tile associated to $B$.
    \smallskip

    \textbf{Step 2.}
    Since every S-tile admits a trivial reduction by $B$, we can write
    \begin{equation}\label{eq:reduction_S}
    (\li{\alpha_1}b_1)^{f'_1} \cdot (\li{\alpha_2}b_2)^{f'_2} \cdot \cdots \cdot (\li{\alpha_s}b_s)^{f'_s} = \li{\alpha'_1}b'_1 \cdot \cdots \cdot \li{\alpha'_k}b'_k
    \end{equation}
    for some $b'_i \in B^{\pm}, \alpha'_i \in \N^n$, where 
    \[
    \multideg(\li{\alpha'_i}b'_i) \leq \multideg\left((\li{\alpha_1}b_1)^{f'_1} \cdot \cdots \cdot (\li{\alpha_s}b_s)^{f'_s}\right), \quad i = 1, \ldots, k.
    \]
    Substitute the first product in~\eqref{eq:f_product_S_tile} using~\eqref{eq:reduction_S}, we obtain
    \begin{equation}\label{eq:smaller_f}
    f = \li{(\alpha'_1 + \gamma)}b'_1 \cdot \cdots \cdot \li{(\alpha'_k + \gamma)}b'_k \cdot \underbrace{\big(\cdots \cdots\big)}_{\text{each} \multideg < \delta},
    \end{equation}
    where
    \begin{equation}\label{eq:md_leq}
    \multideg \left(\li{(\alpha'_i + \gamma)}b'_i\right) \leq \multideg\li{\gamma}{\left((\li{\alpha_1}b_1)^{f'_1} \cdot \cdots \cdot (\li{\alpha_s}b_s)^{f'_s}\right)}, \quad i = 1, \ldots, k.
    \end{equation}
    Note that $\multideg(f) \leq \delta$.
    As in the classical setting, there are two cases.
    
    \textbf{Case 1:} $\multideg(f) = \delta$. 
    Then
    \[
    \LT(f) = \LT \left(\li{(\alpha'_1 + \gamma)}b'_1 \cdot \cdots \cdot \li{(\alpha'_k + \gamma)}b'_k \right) \in \gen{\LT(b) \colon b \in B},
    \]
    which is what we want to prove.
    
    \textbf{Case 2:} $\multideg(f) < \delta$.
    Then by Equations~\eqref{eq:md_leq} and~\eqref{eq:f_product_S_tile}, we have
    \[
    \multideg \left(\li{(\alpha'_i + \gamma)}b'_i\right) \leq \multideg\li{\gamma} {\Big((\li{\alpha_1}b_1)^{f'_1} \cdot (\li{\alpha_2}b_2)^{f'_2} \cdot \cdots \cdot  (\li{\alpha_s}b_s)^{f'_s} \Big)} = \multideg \Big( f \cdot \underbrace{\big(\cdots \cdots\big)^{-1}}_{\text{each} \multideg < \delta} \Big) < \delta
    \]
    for $i = 1, \ldots, k$.
    Therefore, in the representation~\eqref{eq:smaller_f} of $f$, each factor has multidegree strictly smaller than $\delta$.
    Compared to the representation~\eqref{eq:f_product_of_conjugates} where some terms have multidegree $\delta$, this contradicts the minimality of $\delta$ and completes the proof of the theorem.
\end{proof}

Using Theorem~\ref{thm:criterion_tile}, we obtain a ``Buchberger's algorithm'' for computing a standard basis:

\begin{theorem}\label{thm:compute_tile_basis}
    Fix a monomial order.
    There is an algorithm which, given as input a finite set of generators for a stable subgroup $I$, computes a standard basis for $I$.
\end{theorem}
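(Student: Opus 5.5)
The algorithm mirrors Buchberger's algorithm (Theorem~\ref{thm:Buchberger_algorithm}), with Theorem~\ref{thm:criterion_tile} as the termination test. We start with $B$ equal to the given finite generating set, with trivial tiles removed. At each round we compute the finite set of all S-tiles associated to $B$. For each S-tile we compute a reduction by $B$ using Lemma~\ref{lem:reduction}. If some reduction $r$ is non-trivial, we add $r$ to $B$ and repeat. If all reductions are trivial, we stop and output $B$.

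\textbf{Correctness.} Correctness rests on two invariants. The first is that $\gen{B} = I$ throughout. S-tiles lie in $\gen{B}$, since the conjugators $f_i$ lie in $\gen{B}$. A reduction $r$ of an S-tile differs from it by a product of shifts of elements of $B^{\pm}$, so $r \in \gen{B}$, and adding it does not change the generated stable subgroup. Second, when the loop stops, every S-tile has a trivial reduction, so Theorem~\ref{thm:criterion_tile} says $B$ is a standard basis of $I$.

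\textbf{Termination.} Each non-trivial reduction $r$ that we add satisfies $\LT(r) \notin \gen{\LT(b) \colon b \in B}$. Hence the monomial stable subgroup $J_B \coloneqq \gen{\LT(b) \colon b \in B}$ strictly increases at each round. By Lemma~\ref{lem:Dickson_tile}, a strictly increasing chain $J_1 \subsetneq J_2 \subsetneq \cdots$ of monomial stable subgroups cannot be infinite. Indeed, the union of the chain is a monomial stable subgroup, so it is generated by finitely many monomials; these all lie in some $J_N$, forcing the chain to stabilize. (Alternatively, one can argue directly with the sets $A_g$ and the ordinary Dickson lemma.)

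\textbf{Effectiveness, the main obstacle.} The step needing care is computing the set of S-tiles, because condition~(iii) lets the conjugators $f$ range over the infinite group $\gen{B}$. Let $d = 3\max_{b \in B}\|b\|$ and $C = [0,d]^n$. For $\alpha \in [0, 2\max\|b\|]^n$, the support of $(\li{\alpha}b)^f$ is contained in $C$. Moreover, $(\li{\alpha}b)^f$ depends only on the restriction of $f$ to $C$, since conjugation is pointwise. So it suffices to compute the finite set
\[
R \coloneqq \{ f|_C \mid f \in \gen{B} \} \leq G^{C}.
\]
This is the image of $\gen{B}$ under the restriction homomorphism, hence a subgroup of $G^C$. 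It is generated by the restrictions $(\li{\beta} b)|_C$ with $b \in B$ and $\beta \in \N^n$. Only those $\beta$ with $\beta + \supp(b)$ meeting $C$ matter, namely $\beta \in [0,d]^n$, which gives finitely many generators. We therefore compute $R$ by closing these finitely many elements of the finite group $G^C$ under multiplication.

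Then the set of conjugates $(\li{\alpha}b)^{f}$, for $b \in B^{\pm}$, $\alpha \in [0,2\max\|b\|]^n$ and $f \in R$, is finite and computable. The S-tiles are the subgroup of $G^C$ generated by these conjugates, again computed by closure in a finite group. Each S-tile can then be reduced using Lemma~\ref{lem:reduction}, whose effectiveness requires deciding membership of a monomial $\li{\mu}\delta_g$ in $J_B$. This can be done by examining the finite group generated by the monomials $\li{\mu - \multideg(b)}\LT(b)$, over those $b \in B^{\pm}$ with $\multideg(b) \leq \mu$ coordinate-wise, and checking whether $g$ lies in the subgroup of $G$ they generate at $\mu$.
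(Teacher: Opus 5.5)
Your proposal is correct and follows the paper's proof: the same Buchberger-style loop that adds non-trivial reductions of S-tiles, correctness via Theorem~\ref{thm:criterion_tile}, and termination via the ascending-chain consequence of Lemma~\ref{lem:Dickson_tile}. The one difference is how you compute the S-tiles: you pass to the finite image of $\gen{B}$ in $G^{C}$, generated by the restrictions of the shifts $\li{\beta}b$ with $\beta \in [0,d]^n$, whereas the paper closes each orbit $\{t^f\}$ under conjugation by $B^{\pm}$. Your version is the more careful one, since the conjugators range over $\gen{B}$, which is generated by the shifts $\li{\beta}b$ and not by $B^{\pm}$ alone.
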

\begin{proof}
    Let $B$ be the given set of generating tiles for $I$.
    Similar to the ideal setting, the algorithm starts with the set $B$ and repeatedly adds to $B$ the reductions of S-tiles.
    Formally, the algorithm is given by Algorithm~\ref{alg:compute_tile_basis}.

    \begin{algorithm}[h]
    \caption{Computing a standard basis for $I = \gen{B}$}\label{alg:compute_tile_basis}
    \Repeat{$B=B'$}{
        $B' \coloneqq B$\;
        Compute the set $S$ of all S-tiles associated to $B$\;
        \ForEach{$s\in S$}{
            Let $r$ be a reduction of $s$ by $B$\;
            \lIf{$r$ is not trivial}{$B \coloneqq B\cup\{r\}$}
        }
    }
    \Return{$B$}\;
    \end{algorithm}

    If Algorithm~\ref{alg:compute_tile_basis} terminates, then it outputs a set $B$ satisfying the criterion in Theorem~\ref{thm:criterion_tile}, and is therefore a standard basis.
    To obtain the proof of Theorem~\ref{thm:compute_tile_basis}, it suffices to show that (i) the set of S-tiles associated to $B$ is computable, and that (ii) Algorithm~\ref{alg:compute_tile_basis} terminates.

    \smallskip
    \begin{adjustwidth}{3mm}{3mm}
    \textbf{Claim.} Let $B$ be a finite set of tiles. Then the set of S-tiles associated to $B$ is finite and computable.
    \smallskip
    
    \noindent$\blacktriangleright$
    Recall that an S-tile is of the form $(\li{\alpha_1}b_1)^{f_1} \cdot (\li{\alpha_2}b_2)^{f_2} \cdot \cdots \cdot (\li{\alpha_k}b_m)^{f_m}$, where $b_i \in B^{\pm}, \alpha_i \in [0,\, 2\max_{b \in B} \|b\|]^n$, and $f_i \in \gen{B}$.  
    That is, S-tiles are elements of the group generated by
    \[
    T \coloneqq \big\{(\li{\alpha}b)^{f} \;\big|\; b \in B^{\pm}, \alpha \in [0, 2\max_{b \in B} \|b\|]^n, f \in \gen{B} \big\}.
    \]
    First we show that $T$ is finite and computable.
    
    Fix a tile $t = \li{\alpha}b$.
    Its set of conjugates $\{t^{f} \mid f \in \gen{B}\}$ is finite because $\supp(t^{f}) = \supp(t)$ is finite.
    The set $\{t^{f} \mid f \in \gen{B}\}$ is computable by starting with $C = \{t\}$ and repeating the following procedure: for each $c \in C$ and $b \in B^{\pm}$, test whether $c^b \in C$; if $c^b \notin C$ then add it to $C$.
    This procedure terminates since $C$ becomes larger after each step.
    At termination, we have $c^b \in C$ for all $c \in C, b \in B^{\pm}$, so the set $C$ is closed under conjugation by $B^{\pm}$ and we obtain $C = \{t^{f} \mid f \in \gen{B}\}$.
    
    The set $T$ is the finite union of the above sets: 
    \[
    T = \bigcup_{t = \li{\alpha}b, b \in B^{\pm}, \alpha \in [0, 2\max_{b \in B} \|b\|]^n}\{t^{f} \mid f \in \gen{B}\}.
    \]
    Hence $T$ is finite and computable.
    Finally, the set of S-tiles is the group generated by $T$.
    Since all elements of $T$ have support contained in a fixed finite set, this group is finite. It is computable by enumerating products of elements of $T$ until the set of products is closed under multiplication.
    \hfill $\blacktriangleleft$
    \end{adjustwidth}
    \smallskip

    \begin{adjustwidth}{3mm}{3mm}
    \textbf{Claim.} Algorithm~\ref{alg:compute_tile_basis} terminates.
    \smallskip
    
    \noindent$\blacktriangleright$
    We show that the repeat loop in Algorithm~\ref{alg:compute_tile_basis} can only repeat finitely many times.
    Suppose the contrary, then we have an infinite chain of tile sets $B_1 \subset B_2 \subset \cdots$ in the stable subgroup $I$, where $B_{i+1} \setminus B_{i} = \{r_i\}$ and $r_i$ is a reduction of an S-tile by $B_i$.
    By the definition of reduction, $\LT(r_i) \notin \gen{\LT(b) \colon b \in B_i}$.
    Consider the monomial stable subgroup $M = \gen{\LT(b) \colon b \in \bigcup_{i \in \N} B_i}$.
    By Dickson's lemma for stable subgroups (Lemma~\ref{lem:Dickson_tile}), $M$ is generated by finitely many monomials.
    Each of the generators of $M$ is in $\gen{\LT(b) \colon b \in B_i}$ for some $i \in \N$.
    Taking $k$ to be the maximum of these $i$ we obtain $M = \gen{\LT(b) \colon b \in B_k}$.
    A contradiction since $\LT(r_k) \notin \gen{\LT(b) \colon b \in B_k}$.
    \hfill $\blacktriangleleft$
    \end{adjustwidth}
    \smallskip
    Combining the two claims, we conclude that Algorithm~\ref{alg:compute_tile_basis} is well-defined and terminates.
\end{proof}

\subsection{Saturation}\label{subsec:saturation}
Now that we have constructed an algorithm for computing standard bases, we can use it to solve several algorithmic problems for stable subgroups of $\GNn$.
By Theorem~\ref{thm:membership}, a standard basis immediately gives us an algorithm to decide membership in $I \leq \GNn$: a tile $f \in \GNn$ is in $I$ if and only if its reduction by a standard basis of $I$ is trivial.
The next problem we need to tackle is \emph{subgroup saturation}.

In later sections of this paper, we will often need to consider tiles in $G^{(\Z^{m} \times \N^{n-m})}$ or $G^{(\N^{n-m} \times \Z^{m})}$ instead of $\GNn$.
That is, in some directions we need to allow the tiles to translate by a negative integer.
For $0 \leq m \leq n$, the restricted direct product $G^{(\Z^{m} \times \N^{n-m})}$ is defined similarly to $\GNn$: its elements are finitely supported maps 
\[
\Z^{m} \times \N^{n-m} \rightarrow G.
\]
The group $G^{(\Z^{m} \times \N^{n-m})}$ is equipped with a shift action by $\Z^{m} \times \N^{n-m}$, and shift-stable subgroups of $G^{(\Z^{m} \times \N^{n-m})}$ are defined accordingly.
For $S \subseteq G^{(\Z^{m} \times \N^{n-m})}$, we let 
\[
\gen{S}_{\Z^{m} \times \N^{n-m}} \leq G^{(\Z^{m} \times \N^{n-m})}
\]
denote the $(\Z^{m} \times \N^{n-m})$-shift-stable subgroup it generates: when $m = 0$, this is simply $\gen{S}$.
For $S \subseteq G^{(\N^{n-m} \times \Z^{m})}$, the $(\N^{n-m} \times \Z^{m})$-shift-stable subgroup 
\[
\gen{S}_{\N^{n-m} \times \Z^{m}} \leq G^{(\N^{n-m} \times \Z^{m})}
\]
is defined similarly.

In the classical setting of ideals, negative translation corresponds to ideals in \emph{Laurent polynomial rings} such as $\Z[X, X^{-1}]$.
There are essentially three approaches to deal with negative exponents in the ideal setting.
The first approach is to assign a new variable $Y$ for $X^{-1}$ and write the ring $\Z[X, X^{-1}]$ as the quotient $\Z[X, Y]/\gen{XY-1}$, thereby reducing to the case of regular polynomial rings.
The second approach is to start with an ideal $I \subseteq \Z[X]$ and compute the \emph{saturation ideal} 
\[
I \colon X^{\infty} \coloneqq \{f \in \Z[X] \mid \exists t \in \N \text{ such that } X^t f \in I\} \subseteq \Z[X].
\]
Then the ideal of $\Z[X, X^{-1}]$ generated by $I$ is equal to $\{X^{-t}f \mid t \in \N,\; f \in I \colon X^{\infty}\}$.
The third approach is to develop a notion of Gr\"{o}bner basis specifically for Laurent polynomial rings: this is particularly suited to computation in toric varieties~\cite{Sturmfels1996}.

In order to generalize to the stable subgroup setting, the first approach does not work due to $G$ being non-abelian.
Among the remaining two approaches, we will generalize the saturation approach due to its relative simplicity.

Let $I \leq \GNn$ be a stable subgroup, and let $\alpha \in \N^n$ be a vector.
The \emph{saturation subgroup} of $I$ with respect to $\alpha$ is the stable subgroup
\[
I \colon \alpha^{\infty} \coloneqq \left\{f \in G^{(\N^n)} \;\middle|\; \exists t \in \N \text{ such that } \li{t\alpha}f \in I \right\} \leq \GNn.
\]
Let 
\[
\varepsilon_1 = (1, 0, \ldots, 0),\; \ldots,\; \varepsilon_n = (0, \ldots, 0, 1) \in \N^n,
\]
denote the canonical basis of $\N^n$.
We now primarily consider the case $\alpha = \varepsilon_n$.
First we show that computing $I \colon \varepsilon_n^{\infty}$ allows us to represent shift-stable subgroups of $G^{(\N^{n-1} \times \Z)}$:

\begin{lemma}\label{lem:negative_saturation}
    Let $S \subseteq \GNn$ be a set of tiles and $I = \gen{S}$.
    %Let $\gen{S}_{\N^{n-1} \times \Z} \leq G^{(\N^{n-1} \times \Z)}$ denote the $(\N^{n-1} \times \Z)$-shift-stable subgroup generated by $S$.
    Then
    \[
    \gen{S}_{\N^{n-1} \times \Z} = \left\{ \li{-t \varepsilon_n} f \;\middle|\; t \in \N, \; f \in I \colon \varepsilon_n^{\infty}\right\} \leq G^{(\N^{n-1} \times \Z)}.
    \]
\end{lemma}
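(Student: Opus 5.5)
We prove the two inclusions separately. Throughout, $\GNn$ is identified with the subgroup of $G^{(\N^{n-1} \times \Z)}$ of tiles supported in $\N^n$. For $t \in \N$ the shift $\li{-t\varepsilon_n}$ is an automorphism of $G^{(\N^{n-1} \times \Z)}$, with inverse $\li{t\varepsilon_n}$, and it commutes with all other shifts. Write $J$ for the right-hand side $\{\li{-t\varepsilon_n} f \mid t \in \N, f \in I \colon \varepsilon_n^{\infty}\}$.

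\textbf{The inclusion $J \subseteq \gen{S}_{\N^{n-1} \times \Z}$.} Let $f \in I \colon \varepsilon_n^{\infty}$ and $t \in \N$. By definition there is $t' \in \N$ with $\li{t'\varepsilon_n} f \in I = \gen{S} \subseteq \gen{S}_{\N^{n-1} \times \Z}$. Since $\gen{S}_{\N^{n-1}\times\Z}$ is stable under shifts by all of $\N^{n-1} \times \Z$, we get
\[
\li{-t\varepsilon_n} f = \li{-(t+t')\varepsilon_n}\big(\li{t'\varepsilon_n} f\big) \in \gen{S}_{\N^{n-1} \times \Z}.
\]
One small point needs checking: $\li{t'\varepsilon_n}$ on $\GNn$ agrees with the restriction of the $\Z$-shift. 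This holds because shifting by a vector in $\N^n$ never moves support outside $\N^n$.

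\textbf{The inclusion $\gen{S}_{\N^{n-1} \times \Z} \subseteq J$.} First we show that $J$ is a subgroup. Take $\li{-t_1\varepsilon_n} f_1$ and $\li{-t_2\varepsilon_n} f_2$ in $J$ and let $t = \max(t_1,t_2)$. Then
\[
\li{-t_1\varepsilon_n} f_1 = \li{-t\varepsilon_n}\big(\li{(t-t_1)\varepsilon_n} f_1\big),
\]
and $\li{(t-t_1)\varepsilon_n} f_1$ still lies in $I \colon \varepsilon_n^{\infty}$, since that set is stable under $\N^n$-shifts. (This stability follows from the stability of $I$: the shifts $\li{t\varepsilon_n}$ and $\li{\beta}$ commute.) The same holds for $f_2$. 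We also need $I \colon \varepsilon_n^{\infty}$ to be closed under products and inverses: if $\li{a\varepsilon_n} f, \li{b\varepsilon_n} g \in I$, then shifting further gives $\li{\max(a,b)\varepsilon_n}(fg^{-1}) \in I$. Hence
\[
\li{-t_1\varepsilon_n} f_1 \cdot \big(\li{-t_2\varepsilon_n} f_2\big)^{-1} = \li{-t\varepsilon_n}\big(f_1' f_2'^{-1}\big) \in J,
\]
where $f_1', f_2'$ are the shifted tiles. Second, $J$ is stable under $\N^{n-1} \times \Z$-shifts. Take $(\beta, z)$ with $\beta \in \N^{n-1}$ and $z \in \Z$. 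Then $\li{(\beta,z)}\li{-t\varepsilon_n} f = \li{-(t-z)\varepsilon_n}\li{(\beta,0)} f$. If $t - z \ge 0$ this lies in $J$ directly. Otherwise it equals $\li{(\beta,z-t)} f$ with $z-t \in \N$, which is again in $I \colon \varepsilon_n^{\infty}$ (take $s=0$ in the definition, i.e. write it as $\li{-0\cdot\varepsilon_n}$ of that tile). Finally, $S \subseteq I \subseteq J$, so minimality of $\gen{S}_{\N^{n-1}\times\Z}$ gives the inclusion.

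The main obstacle is keeping the identifications consistent, so the one thing to verify carefully is the following. An element $\li{-t\varepsilon_n} f$ is an element of $G^{(\N^{n-1}\times\Z)}$. Membership of a shifted tile in $I \colon \varepsilon_n^{\infty}$ requires that tile to have support in $\N^n$, and this holds precisely in the cases where the shift amount is nonnegative. The remaining steps are routine bookkeeping with commuting shifts.
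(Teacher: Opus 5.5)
Your proof is correct. The inclusion $J \subseteq \gen{S}_{\N^{n-1}\times\Z}$ is exactly the paper's argument.

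For the reverse inclusion you take a different route. You show that $J$ is itself an $(\N^{n-1}\times\Z)$-shift-stable subgroup containing $S$ and conclude by minimality. This requires you to check that $I\colon\varepsilon_n^{\infty}$ is an $\N^n$-stable subgroup, which the paper asserts without proof, and to split into cases on the sign of $t-z$.

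The paper argues elementwise. It writes $h\in\gen{S}_{\N^{n-1}\times\Z}$ as a finite product $\li{\alpha_1}s_1\cdots\li{\alpha_k}s_k$ with $\alpha_i\in\N^{n-1}\times\Z$. It then picks $t$ large enough that every $\alpha_i+t\varepsilon_n\in\N^n$, so $\li{t\varepsilon_n}h\in\gen{S}=I$ and $h=\li{-t\varepsilon_n}\big(\li{t\varepsilon_n}h\big)$.

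The paper's argument is shorter and gives slightly more: every element of $\gen{S}_{\N^{n-1}\times\Z}$ is $\li{-t\varepsilon_n}f$ with $f$ in $I$ itself, not just in the saturation. Your closure-based argument avoids manipulating words and makes explicit the structural properties of $I\colon\varepsilon_n^{\infty}$ that the paper uses implicitly.
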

\begin{proof}
    First we show $\gen{S}_{\N^{n-1} \times \Z} \subseteq  \left\{ \li{-t \varepsilon_n} f \;\middle|\; t \in \N, \; f \in I \colon \varepsilon_n^{\infty}\right\}$.
    Let $h \in \gen{S}_{\N^{n-1} \times \Z}$, then we can write
    \[
    h = \li{\alpha_1} s_1 \li{\alpha_2} s_2 \cdots \li{\alpha_k} s_k
    \]
    with $\alpha_i \in \N^{n-1} \times \Z$ and $s_i \in S^{\pm}$.
    Take a large enough $t \in \N$ such that $\alpha_i + t \varepsilon_n \in \N^n$ for $i = 1, \ldots, k$. 
    Then
    \[
    \li{t \varepsilon_n}h = \li{\alpha_1 + t \varepsilon_n} s_1 \cdot \li{\alpha_2 + t \varepsilon_n} s_2 \cdot \cdots \cdot \li{\alpha_k + t \varepsilon_n} s_k \in \gen{S} = I.
    \]
    Therefore $\li{t \varepsilon_n}h \in I \colon \varepsilon_n^{\infty}$, so
    $h \in \left\{ \li{-t \varepsilon_n} f \;\middle|\; t \in \N, \; f \in I \colon \varepsilon_n^{\infty}\right\}$.

    The other inclusion is easy: take $t \in \N, f \in I \colon \varepsilon_n^{\infty}$, then $\li{u \varepsilon_n} f \in I$ for some $u \in \N$, so 
    \[
    \li{-t \varepsilon_n}f = \li{(- t - u) \varepsilon_n}{\big(\li{u \varepsilon_n} f\big)} \in \gen{I}_{\N^{n-1} \times \Z} = \gen{S}_{\N^{n-1} \times \Z}.
    \]
\end{proof}

We now provide an algorithm that, given as input the generators of $I \leq \GNn$, computes the generators of $I \colon \varepsilon_n^{\infty} \leq \GNn$.
Our algorithm is based on Bayer's algorithm for ideal saturation~\cite{Bayer1982} (see also~\cite[p.3]{Berthomieu2022F4SAT}).

\begin{theorem}[Subgroup saturation]\label{thm:Bayer_tile}
    There is an algorithm which, given as input a finite set of generators of a stable subgroup $I \leq \GNn$, computes a finite set of generators of $I \colon \varepsilon_n^{\infty} \leq \GNn$.
\end{theorem}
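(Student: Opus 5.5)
I would follow Bayer's strategy: homogenize, compute a standard basis in a reverse-lexicographic order in which $\varepsilon_n$ is the ``last variable'', divide each basis element by the largest power of $\varepsilon_n$, and dehomogenize. A fallback that avoids grading altogether is sketched at the end.

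\textbf{Step 1: homogenization.} Call a tile $f \in \GN{n+1}$ \emph{homogeneous of degree $d$} if $\supp(f) \subseteq \{\alpha \mid \alpha_1 + \cdots + \alpha_{n+1} = d\}$. Call a stable subgroup \emph{graded} if it is generated by homogeneous tiles.

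The first lemma to prove is that a graded stable subgroup $J$ is the direct product of its homogeneous pieces. Concretely, write $\pi_d(f)$ for the restriction of $f$ to degree $d$. Since different degrees have disjoint supports, $\pi_d$ is a group homomorphism commuting with shifts in the obvious graded sense. Moreover, $\pi_d$ of a product of shifted homogeneous generators is the subproduct of those factors of degree $d$. Hence $f \in J$ implies $\pi_d(f) \in J$ for every $d$.

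For each given generator $s \in \GNn$ with $D = \max_{\alpha \in \supp s} |\alpha|$, define $s^h(\alpha, D - |\alpha|) \coloneqq s(\alpha)$. Put the homogenizing coordinate \emph{before} $\varepsilon_n$, so that $\varepsilon_n$ becomes the last coordinate $\varepsilon_{n+1}$ of $\N^{n+1}$. Let $J = \gen{s^h}$.

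I would then show that dehomogenization (setting the extra coordinate to $0$, i.e.\ summing it away) maps $J$ onto $I$. I would also show that $f \in I$ implies that some shift of $f^h$ along the homogenizing coordinate lies in $J$. Both are routine checks on products of shifted generators.

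\textbf{Step 2: the Bayer property.} Use the graded reverse lexicographic order on $\N^{n+1}$. Compare total degree first; among equal degrees, the tuple with the smaller last coordinate is larger, and so on. This is a monomial order, being a well-order because degree comes first.

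The key property is the following. For homogeneous $f$, if $\LT(f)$ has last coordinate $\geq t$, then every point of $\supp(f)$ has last coordinate $\geq t$. Consequently $f = \li{t\varepsilon_{n+1}}g$ for a homogeneous $g$.

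Now compute a standard basis $B$ of $J$ by Theorem~\ref{thm:compute_tile_basis}. Replace $B$ by the set of all homogeneous components of its elements. These lie in $J$ by the lemma of Step~1, and the set is still a standard basis: the leading term of $b$ is the leading term of its top-degree component, so $\gen{\LT}$ can only grow, and it stays inside $\gen{\LT(J)}$.

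For each $b \in B$, let $b'$ be $b$ shifted down by the maximal power of $\varepsilon_{n+1}$. I claim $B' = \{b'\}$ generates $J \colon \varepsilon_{n+1}^\infty$. The proof mimics Bayer's. Take a homogeneous $f$ with $\li{t\varepsilon_{n+1}}f \in J$ and reduce $\li{t\varepsilon_{n+1}} f$ by $B$; this gives a trivial reduction by Theorem~\ref{thm:membership}. Every factor $\li{\alpha_i}b_i$ used has multidegree at most that of $\li{t\varepsilon_{n+1}}f$ and the same degree. By the Bayer property, each such factor is therefore divisible by $\varepsilon_{n+1}^t$, hence equals $\li{t\varepsilon_{n+1}}$ of a shift of some $b_i'$. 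Cancelling the shift $\li{t\varepsilon_{n+1}}$, which is injective, expresses $f$ in terms of $B'$.

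\textbf{Step 3: dehomogenize and identify.} Dehomogenize $B'$ to get a finite set $C \subseteq \GNn$. I would show $\gen{C} = I \colon \varepsilon_n^\infty$.

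For $\subseteq$: shifting back a dehomogenized element lands in the dehomogenization of $J$, which is $I$.

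For $\supseteq$: if $\li{t\varepsilon_n} f \in I$, homogenize and use Step~1 to get $\li{t\varepsilon_{n+1} + u\varepsilon_{n}} f^h \in J$ for some $u$, where $\varepsilon_n$ now denotes the homogenizing coordinate. Then apply Step~2, saturating only in the last coordinate; the harmless extra shift in the homogenizing coordinate disappears under dehomogenization.

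\textbf{Main obstacle.} I expect the main difficulty to be Step~1 in the non-abelian setting. Specifically, one must check that homogenization commutes with products up to shifts in the homogenizing coordinate. A product of homogenized tiles is generally not homogeneous, and the comparison has to be carried out componentwise via the projections $\pi_d$.

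If this becomes messy, my fallback is a different route. The chain $I \colon \varepsilon_n^t$ is ascending and stabilizes by Corollary~\ref{cor:Hilbert_stable}. Each $I \colon \varepsilon_n$ is computable as the un-shift of the elimination subgroup $I_{\N^{n-1}\times[1,\infty)}$. Stabilization is detectable by equality of standard bases, checked via membership in both directions. This route, however, needs an elimination step that is not yet available at this point in the paper.
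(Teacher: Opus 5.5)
Your proposal is correct and follows the paper's proof essentially step for step. You homogenize with the extra coordinate placed anywhere but last (the paper puts it first), compute a $<_{\grevlex}$ standard basis split into homogeneous components, shift each element down by the maximal power of the last coordinate, and dehomogenize, handling non-abelian products by discarding factors of the wrong total degree. The one small difference is in the saturation step: you get generation of $J \colon \varepsilon_{n+1}^{\infty}$ for homogeneous tiles directly from a trivial reduction of $\li{t\varepsilon_{n+1}}f$, whereas the paper uses a leading-term argument to show the shifted-down set is already a standard basis of the saturation (valid for non-homogeneous $f$ as well); either suffices for the dehomogenization step.
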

\begin{proof}
    The overall strategy is to adapt a non-homogeneous version of Bayer's ideal saturation algorithm~\cite{Bayer1982} to the stable subgroup setting.
    As a reminder, given an ideal $I \subseteq \K[X_1, \ldots, X_n]$, the (non-homogeneous) Bayer's algorithm is a four-step procedure for computing a generating set of the saturation ideal $I \colon X_n^{\infty}$: 
    \begin{enumerate}[nosep, label=\arabic*.]
        \item \emph{homogenize} $I$;
        \item compute a Gr\"{o}bner basis $B$ of $I$ with respect to the \emph{graded reverse lexicographic order} where $X_n$ is the smallest variable;
        \item factor out from all polynomials in $B$ the highest possible power of $X_n$ to obtain the generators of $I \colon X_n^{\infty}$;
        \item \emph{dehomogenize} the obtained generators.
    \end{enumerate}
    We now adapt this four-step procedure to the stable subgroup setting.
    \smallskip
    
    \textbf{Step 1.}
    For a tile $f \in \GNn$, its \emph{total degree} is defined as 
    \[
    \max\{\alpha_1 + \cdots + \alpha_n \mid (\alpha_1, \ldots, \alpha_n) \in \supp(f)\} \in \N.
    \]
    For a tile $f$ of total degree $d$, its \emph{homogenization}, denoted by $\widetilde f$, is the following tile in $\GN{n+1}$:
    \[
    \widetilde f(\alpha_0, \alpha_1, \ldots, \alpha_n) =
    \begin{cases}
        f(\alpha_1, \ldots, \alpha_n), & \alpha_0 + \alpha_1 + \cdots + \alpha_n = d; \\
        e, & \text{otherwise}.
    \end{cases}
    \]   
    Intuitively, the homogenization $\widetilde f \in \GN{n+1}$ is the ``reverse-projection'' of the tile $f \in \GNn$ on the hyperplane $\{(\alpha_0, \ldots, \alpha_n) \in \N^{n+1} \mid \sum_{i=0}^n \alpha_i = d\}$.
    We say a tile $h \in \GN{n+1}$ is \emph{homogeneous} if $\alpha_0 + \alpha_1 + \cdots + \alpha_{n}$ is constant for all $(\alpha_0, \alpha_1, \ldots, \alpha_{n}) \in \supp(h)$.
    %An \emph{homogeneous stable subgroup} is a stable subgroup generated by homogeneous tiles, not necessarily of the same total degree.
    
    Let $F = \{f_1, \ldots, f_k\}$ be a generating set for $I \leq \GNn$.
    Define $\widetilde I \leq \GN{n+1}$ to be the stable subgroup generated by $\widetilde F = \{\widetilde f_1, \ldots, \widetilde f_k\}$:
    \[
    \widetilde I \coloneqq \gen{\widetilde f_1, \ldots, \widetilde f_k} \leq \GN{n+1}.
    \]
    To compute the generators for $I \colon \varepsilon_n^{\infty} \leq \GNn$, we will first compute the generators for $\widetilde I \colon \varepsilon_n^{\infty} \leq \GN{n+1}$.
    This is done in Steps 2 and 3.
    \smallskip

    \textbf{Step 2.}
    Let $<_{\grevlex}$ be the \emph{graded reverse lexicographic order} on $\N^{n+1}$, that is, we have $\alpha <_{\grevlex} \beta$ if and only if 
    \[
    \sum_{i=0}^{n} \alpha_i < \sum_{i=0}^{n} \beta_i, \; \text{ or } \; \left(\sum_{i=0}^{n} \alpha_i = \sum_{i=0}^{n} \beta_i \;\text{ and }\; \alpha_{k} > \beta_{k}, \alpha_{k+1} = \beta_{k+1}, \ldots, \alpha_{n} = \beta_{n} \text{ for some } k \right).
    \]
    For example, $(0,0,2) <_{\grevlex} (0,1 ,1) <_{\grevlex} (1, 0 ,1)  <_{\grevlex} (0,2,0) <_{\grevlex} (1,1,0) <_{\grevlex} (2, 0, 0)$.
    
    Compute a standard basis $\widetilde B$ of $\widetilde I$ with respect to $<_{\grevlex}$ using Theorem~\ref{thm:compute_tile_basis}.
    By extracting the homogeneous components of each element of $\widetilde B$ we can suppose $\widetilde B$ contains only homogeneous tiles.
    Indeed, if $b \in \widetilde B \subset \widetilde I$ is not homogeneous, then write $b = b_1 b_2 \cdots b_k$ where $b_1, b_2 \ldots, b_k$ are homogeneous of different total degrees.
    Since $\widetilde I$ is generated by homogeneous tiles, it is easy to see that $b_1, b_2 \ldots, b_k \in \widetilde I$.
    Furthermore, $\LT(b) \in \{\LT(b_1), \ldots, \LT(b_k)\}$, so we can replace $b \in \widetilde B$ with the set $\{b_1, b_2 \ldots, b_k\}$.
    \smallskip
    
   \textbf{Step 3.}
   Now write $\widetilde B = \{\widetilde b_1, \ldots, \widetilde b_k\}$ where each $\widetilde b_i \in \widetilde I$ is homogeneous.
    For each $1 \leq i \leq k$, write
    \begin{equation}\label{eq:def_b_prime}
    \widetilde b_i = \li{t_i\varepsilon_n}b_i', \quad b_i' \in \GN{n+1},
    \end{equation}
    where $t_i \in \N$ is the largest integer such that $\supp(\widetilde b_i) - t_i \varepsilon_n \subseteq \N^{n+1}$.
    In other words, $b'_i$ is the translation of $\widetilde b_i$ in the negative direction of $\varepsilon_n$, such that its support touches the hyperplane $\alpha_n = 0$.
    We claim that $\multideg(b'_i) \in \N^n \times \{0\}$.
    Indeed, suppose on the contrary $\multideg(b'_i) = \beta$ with $\beta_n > 0$.
    Then for every $\alpha \in \supp(b'_i)$, we have $\alpha <_{\grevlex} \beta$.
    Since $b'_i$ is homogeneous, by the definition of $<_{\grevlex}$ we have $\alpha_n \geq \beta_n >0$.
    This means that every $\alpha \in \supp(b'_i)$ satisfies $\alpha_n >0$, a contradiction to the maximality of $t_i$.
    We have thus shown $\multideg(b'_i) \in \N^n \times \{0\}$.

    \smallskip
    \begin{adjustwidth}{3mm}{3mm}
    \textbf{Claim.} The set $B' = \{b'_1, \ldots, b'_k\}$ is a standard basis of $\widetilde I \colon \varepsilon_n^{\infty}$ with respect to $<_{\grevlex}$.
    \smallskip 

    \noindent $\blacktriangleright$
    By definition~\eqref{eq:def_b_prime}, we have $b'_i \in \widetilde I \colon \varepsilon_n^{\infty}$ for all $i = 1, \ldots, k$.
    Therefore it suffices to show that 
    \[
    \LT(\widetilde I \colon \varepsilon_n^{\infty}) \subseteq \gen{\LT(b'_1), \ldots, \LT(b'_k)}.
    \]
    Take $f \in \widetilde I \colon \varepsilon_n^{\infty}$, then $\li{t \varepsilon_n} f \in \widetilde I$ for some $t \in \N$.
    Since $\{\widetilde b_1, \ldots, \widetilde b_k\}$ is a standard basis for $\widetilde I$, we have
    \[
    \LT(\li{t \varepsilon_n} f) \in \gen{\LT(\widetilde b_1), \ldots, \LT(\widetilde b_k)} = \gen{\LT(\li{t_1\varepsilon_n}b_1'), \ldots, \LT(\li{t_k\varepsilon_n}b_k')}.
    \]
    Write
    \begin{equation*}
    \LT(\li{t \varepsilon_n} f) = \li{\alpha_1}\LT(\li{t_{i_1}\varepsilon_n}b_{i_1}') \cdot \cdots \cdot  \li{\alpha_s}\LT(\li{t_{i_s}\varepsilon_n}b_{i_s}'),
    \end{equation*}
    where $\multideg(\LT(\li{t \varepsilon_n} f)) = \multideg(\li{\alpha_j}\LT(\li{t_{i_j}\varepsilon_n}b_{i_j}'))$ for all $j = 1, \ldots, s$.
    That is,
    \begin{equation}\label{eq:deg_n_dir}
    t \varepsilon_n + \multideg(f) = \alpha_j + t_{i_j}\varepsilon_n + \multideg(b_{i_j}').
    \end{equation}
    The last entry of the left hand side vector of Equation~\eqref{eq:deg_n_dir} is at least $t$.
    Since $\multideg(b'_{i_j}) \in \N^n \times \{0\}$, the last entry of $\alpha_j + t_{i_j}\varepsilon_n$ must be at least $t$.
    Therefore
    \[
    \alpha_j + t_{i_j}\varepsilon_n - t \varepsilon_n \in \N^{n+1}
    \]
    for all $j$.
    Consequently,
    \[
    \LT(f) = \li{\alpha_1 + t_{i_1}\varepsilon_n - t \varepsilon_n}\LT(b_{i_1}') \cdot \cdots \cdot \li{\alpha_1 + t_{i_s}\varepsilon_n - t \varepsilon_{n}}\LT(b_{i_s}') \in \gen{\LT(b'_1), \ldots, \LT(b'_k)}.
    \]
    %Also, since translation does not alter leading coefficients, Equation~\eqref{eq:homo_LC} implies
    %\[
    %\LC(f) \in \gen{\LC(b'_1), \ldots, \LC(b'_s)}.
    %\]
    %Therefore
    %\[
    %\LT(f) \in \gen{\LT(b'_1), \ldots, \LT(b'_s)} \subseteq \gen{\LT(b'_1), \ldots, \LT(b'_k)},
    %\]
    We conclude that $\LT(\widetilde I \colon \varepsilon_n^{\infty}) \subseteq \gen{\LT(b'_1), \ldots, \LT(b'_k)}$, so $\{b'_1, \ldots, b'_k\}$ is a standard basis for $\widetilde I \colon \varepsilon_n^{\infty}$.
    \hfill $\blacktriangleleft$
    \end{adjustwidth}
    \smallskip

    \textbf{Step 4.} 
    We now dehomogenize $B' = \{b'_1, \ldots, b'_k\} \subset \widetilde I \colon \varepsilon_n^{\infty}$ to obtain a generating set for $I \colon \varepsilon_n^{\infty}$.
    Recall that $\widetilde I$ is defined as the stable subgroup generated by $\widetilde F = \{\widetilde f_1, \ldots, \widetilde f_k\}$, where $F = \{f_1, \ldots, f_k\}$ is a generating set for $I$.
    For a homogeneous tile $f \in \GN{n+1}$ of total degree $d$, its \emph{dehomogenization} is the tile $\widecheck f \in \GNn$ defined by
    \[
    \widecheck f(\alpha_1, \ldots, \alpha_n) = f(d- \alpha_1 - \cdots - \alpha_n, \alpha_1, \ldots, \alpha_n), \quad \forall (\alpha_1, \ldots, \alpha_n) \in \N^n.
    \]
    %Let $<_{\revlex}$ denote the \emph{reverse lexicographic order} on $\N^n$, defined as $\alpha <_{\revlex} \beta$ if and only if
    %\[
    %\alpha_k < \beta_k, \alpha_{k+1} = \beta_{k+1}, \ldots, \alpha_{n} = \beta_{n} \text{ for some } 1 \leq k \leq n.
    %\]
    %Then for a homogeneous $f \in \GN{n+1}$, the dehomogenization of its leading term $\LT(f)$ with respect to $<_{\grlex}$ is equal to the leading term $\LT(\widecheck{f})$ with respect to $<_{\lex}$.
    Geometrically, the dehomogenization of a tile $f \in \GN{n+1}$ is its projection on the last $n$ coordinates.

    \smallskip
    \begin{adjustwidth}{3mm}{3mm}
    \textbf{Claim.} Let $\{b'_1, \ldots, b'_k\}$ be a generating set of $\widetilde I \colon \varepsilon_n^{\infty} \leq \GN{n+1}$. Then $\{\widecheck{b'_1}, \ldots, \widecheck{b'_k}\}$ is a generating set of $I \colon \varepsilon_n^{\infty} \leq \GNn$.
    \smallskip 

    \noindent $\blacktriangleright$
    First we show that $\widecheck{b'_i} \in I \colon \varepsilon_n^{\infty}$ for $i = 1, \ldots, k$.
    Since $b'_i \in \widetilde I \colon \varepsilon_n^{\infty}$, we have $\li{t \varepsilon_n} b'_i \in \widetilde I$ for some $t \in \N$.
    Since $\widetilde I$ is generated by $\widetilde F = \{\widetilde f_1, \ldots, \widetilde f_k\}$, we can write $\li{t \varepsilon_n} b'_i$ in the form
    \begin{equation}\label{eq:pre_dehomo_b}
    \li{t \varepsilon_n} b'_i = \li{\alpha_1} {\widetilde {f_{i_1}}} \cdots \li{\alpha_s} {\widetilde {f_{i_s}}}.
    \end{equation}
    By only keeping the factors with the same total degree as $b'_i$, we can without loss of generality suppose that every $\li{\alpha_j} {\widetilde {f_{i_j}}}$ and have the same total degree as $b'_i$.
    Dehomogenizing Equation~\eqref{eq:pre_dehomo_b} yields
    \[
    \li{t \varepsilon_n} {\widecheck{b'_i}} = \li{\pi(\alpha_1)} f_{i_1} \cdots \li{\pi(\alpha_s)} f_{i_s}.
    \]
    where $\pi \colon \N^{n+1} \rightarrow \N^n$ is the projection on the last $n$ coordinates.
    Therefore $\widecheck{b'_i} \in I \colon \varepsilon_n^{\infty}$ for $i = 1, \ldots, k$.
    
    For the other inclusion, we need to show
    \[
    I \colon \varepsilon_n^{\infty} \subseteq \gen{\widecheck{b'_1}, \ldots, \widecheck{b'_k}}.
    \]
    Take any $f \in I \colon \varepsilon_n^{\infty}$, we have $\li{t \varepsilon_n} f \in I$ for some $t \in \N$.
    Since $I$ is generated by $F = \{f_1, \ldots, f_k\}$, we can write $\li{t \varepsilon_n} f$ in the form
    \begin{equation*}
    \li{t \varepsilon_n}{f} = \li{\alpha_1}{}{f_{i_1}} \cdots \li{\alpha_s}{}{f_{i_s}}.
    \end{equation*}
    By homogenizing\footnote{We homogenize ``at degree $D$'', where $D$ is an integer larger than the total degree of $\li{t \varepsilon_n}{f}$ and all $\li{\alpha_j}{}{f_{i_j}}$. 
    That is, we replace a tile $h$ by the tile $\widetilde h^{(D)}$ with 
    \[
    \widetilde h^{(D)}(\alpha_0, \alpha_1, \ldots, \alpha_n) =
    \begin{cases}
        h(\alpha_1, \ldots, \alpha_n), & \alpha_0 + \alpha_1 + \cdots + \alpha_n = D; \\
        e, & \text{otherwise}.
    \end{cases}
    \]
    In particular, $\li{t \varepsilon_n}{f}$ becomes $\li{t_0 \varepsilon_0 + t \varepsilon_n}{\widetilde f}$, where $t_0$ is the difference between the total degree of $\li{t \varepsilon_n}{f}$ and $D$.
    Similarly, $\li{\alpha_j}{}{f_{i_j}}$ becomes $\li{t_j \varepsilon_0 + \alpha_j}{\widetilde{f_{i_j}}}$, where $t_j$ is the difference between the total degree of $\li{\alpha_j}{}{f_{i_j}}$ and $D$.
    } we get 
    \[
    \li{t_0 \varepsilon_0 + t \varepsilon_n}{\widetilde f} = \li{t_1 \varepsilon_0 + \alpha_1}{\widetilde{f_{i_1}}} \cdots \li{t_s \varepsilon_0 + \alpha_s}{\widetilde{f_{i_s}}} \in \widetilde I,
    \]
    for some $t_0, t_1, \ldots, t_s \in \N$.
    Therefore $\li{t_0 \varepsilon_0}{\widetilde f} \in \widetilde I \colon \varepsilon_n^{\infty}$.
    Since $\{b'_1, \ldots, b'_k\}$ is a generating set for $\widetilde I \colon \varepsilon_n^{\infty}$, we can write $\li{t_0 \varepsilon_0}{\widetilde f}$ in the form
    \begin{equation}\label{eq:pre_dehomo}
    \li{t_0 \varepsilon_0}{\widetilde f} = \li{\beta_1} b'_{i_1} \cdots \li{\beta_r} b'_{i_r}.
    \end{equation}
    Again we can without loss of generality suppose every $\li{\beta_j} b'_{i_j}$ have the same total degree as $\li{t_0 \varepsilon_0}{\widetilde f}$.
    Dehomogenizing Equation~\eqref{eq:pre_dehomo} yields
    \[
    f = \li{\pi(\beta_1)}{\widecheck{b'_1}} \cdots \li{\pi(\beta_r)} {\widecheck{b'_r}}.
    \]
    We conclude that $f \in \gen{\widecheck{b'_1}, \ldots, \widecheck{b'_k}}$ and hence $I \colon \varepsilon_n^{\infty} = \gen{\widecheck{b'_1}, \ldots, \widecheck{b'_k}}$.
    \hfill $\blacktriangleleft$
    \end{adjustwidth}
    \smallskip

    \noindent Combining the four steps, we have thus obtained a finite generating set of $I \colon \varepsilon_n^{\infty} \leq \GNn$.
\end{proof}

Note that Theorem~\ref{thm:Bayer_tile} allows us to compute the saturation $I \colon \varepsilon_i^{\infty} \leq \GNn$ for any $i = 1, \ldots, n$, by simply permuting the indices.
More generally, for $0 \leq m \leq n$, we can define the stable subgroup
\begin{align*}
    I \colon (\varepsilon_{1}, \ldots, \varepsilon_m)^{\infty} & \coloneqq \left\{f \in \GNn \;\middle|\; \exists t_1, \ldots, t_m\in \N \text{ such that } \li{(t_1 \varepsilon_1 + \cdots + t_m \varepsilon_m)}f \in I \right\} \\
    & = ((I \colon \varepsilon_1^{\infty}) \colon \varepsilon_2^{\infty}) \colon \cdots \colon \varepsilon_m^{\infty},
\end{align*}
and use Theorem~\ref{thm:Bayer_tile} iteratively to compute a finite generating set for it.
Furthermore, by applying Lemma~\ref{lem:negative_saturation} iteratively, we obtain:

\begin{observation}\label{obs:negative_saturation_d}
    Let $S \subseteq \GNn$ and $I = \gen{S}$.
    Then for $0 \leq m \leq n$, we have
    \[
    \gen{S}_{\Z^{m} \times \N^{n-m}} = \left\{ \li{-t_1 \varepsilon_1 - \cdots - t_m \varepsilon_m} f \;\middle|\; t_1, \ldots, t_m \in \N, \; f \in I \colon (\varepsilon_{1}, \ldots, \varepsilon_m)^{\infty}\right\} \leq G^{(\Z^m \times \N^{n-m})}.
    \]
\end{observation}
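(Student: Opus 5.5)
We prove the observation by induction on $m$, applying Lemma~\ref{lem:negative_saturation} once per coordinate. Alternatively, we can redo its two-inclusion argument directly for all $m$ directions at once. The direct argument is short, so we use it and mention the induction only as a cross-check.

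\emph{Inclusion $\subseteq$.} Take $h \in \gen{S}_{\Z^{m} \times \N^{n-m}}$ and write $h = \li{\alpha_1} s_1 \cdots \li{\alpha_k} s_k$ with $s_i \in S^{\pm}$ and $\alpha_i \in \Z^m \times \N^{n-m}$. Choose $t_1, \ldots, t_m \in \N$ large enough that every shifted vector $\alpha_i + t_1\varepsilon_1 + \cdots + t_m\varepsilon_m$ lies in $\N^n$. Each shift is a homomorphism, and shifts compose additively on $G^{(\Z^m \times \N^{n-m})}$. Hence $\li{(t_1\varepsilon_1+\cdots+t_m\varepsilon_m)}h$ is a product of $\N^n$-translates of elements of $S^{\pm}$. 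In particular its support lies in $\N^n$ and it belongs to $I$.

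Set $f \coloneqq \li{(t_1\varepsilon_1+\cdots+t_m\varepsilon_m)}h$. Then $f \in \GNn$, and shifting $f$ by the zero vector gives an element of $I$, so $f \in I \colon (\varepsilon_1,\ldots,\varepsilon_m)^{\infty}$. Since $h = \li{-t_1\varepsilon_1-\cdots-t_m\varepsilon_m} f$, we obtain the inclusion.

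\emph{Inclusion $\supseteq$.} Take $f \in I\colon(\varepsilon_1,\ldots,\varepsilon_m)^\infty$. By definition there are $u_1,\ldots,u_m \in \N$ with $\li{(u_1\varepsilon_1+\cdots+u_m\varepsilon_m)}f \in I \subseteq \gen{S}_{\Z^{m} \times \N^{n-m}}$. The subgroup $\gen{S}_{\Z^{m} \times \N^{n-m}}$ is stable under shifts by vectors in $\Z^m \times \{0\}^{n-m}$. Shifting by $-(t_1+u_1)\varepsilon_1 - \cdots - (t_m+u_m)\varepsilon_m$ therefore yields $\li{-t_1\varepsilon_1-\cdots-t_m\varepsilon_m}f$ inside $\gen{S}_{\Z^{m} \times \N^{n-m}}$.

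The only point needing care is a bookkeeping one. We must identify $\GNn$ with the subgroup of $G^{(\Z^m\times\N^{n-m})}$ of tiles supported in $\N^n$. On tiles supported in $\N^n$ and under shifts by $\N^n$, the truncated shift on $\GNn$ agrees with the genuine shift. So no entries are lost when we move between the two groups. Under this identification, the equality $\gen{S}_{\Z^{m} \times \N^{n-m}} = \gen{I}_{\Z^{m} \times \N^{n-m}}$ follows exactly as in the proof of Lemma~\ref{lem:negative_saturation}.

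We also check the identity $I\colon(\varepsilon_1,\ldots,\varepsilon_m)^\infty = ((I\colon\varepsilon_1^\infty)\colon\cdots)\colon\varepsilon_m^\infty$ stated before the observation. This is immediate because shifts in different coordinates commute and add. The identity is not needed for the direct argument above, but it is what makes the observation compatible with computing the saturation by iterating Theorem~\ref{thm:Bayer_tile}.
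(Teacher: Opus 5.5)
Your proof is correct and complete. The paper gets the observation in one line by applying Lemma~\ref{lem:negative_saturation} iteratively, one coordinate at a time. You instead rerun that lemma's two-inclusion argument with all $m$ negative directions handled at once: shift far enough to land in $\N^n$, then shift back.

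The underlying idea is the same. Your direct version has the advantage of being self-contained, and it avoids bookkeeping that the iteration leaves implicit. The lemma is stated only for the base group $\GNn$, so a second application requires either passing through the intermediate saturation $I\colon\varepsilon_1^{\infty}$ and re-positivizing coordinates that are already allowed to be negative, or a version of the lemma over $G^{(\Z^{j}\times\N^{n-j})}$. The paper's route gains only brevity.

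Your check that $I\colon(\varepsilon_1,\ldots,\varepsilon_m)^{\infty} = ((I\colon\varepsilon_1^{\infty})\colon\cdots)\colon\varepsilon_m^{\infty}$ is also correct. That identity is what connects the observation to the iterated use of Theorem~\ref{thm:Bayer_tile}.
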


Hence, to describe the elements in $\gen{S}_{\Z^{m} \times \N^{n-m}}$, it suffices to compute $I \colon (\varepsilon_{1}, \ldots, \varepsilon_m)^{\infty} \leq \GNn$.

\subsection{Variable elimination}\label{subsec:eliminiation}
The next procedure we provide is an algorithm for \emph{variable elimination}.
Namely, for a stable subgroup $I \leq \GNn$ and an integer $d \geq 1$, we want to characterize all elements of $I$ supported in $[0, d-1] \times \N^{n-1}$:
\[
I_{[0, d-1] \times \N^{n-1}} \coloneqq \big\{f \in I \;\big|\; \supp(f) \subseteq [0, d-1] \times \N^{n-1} \big\} \leq G^{([0, d-1] \times \N^{n-1})}.
\]
Since $G^{[0, d-1]}$ is the direct product $G^d$, there is a natural isomorphism
\[
G^{([0, d-1] \times \N^{n-1})} \cong (G^d)^{(\N^{n-1})}.
\]
This is explicitly given by $f \mapsto \hat f$, where 
\[
\hat f(\alpha) = \big(f(0, \alpha), \ldots, f(d-1, \alpha) \big) \in G^d, \quad \alpha \in \N^{n-1}.
\]
Under this isomorphism, $I_{[0, d-1] \times \N^{n-1}}$ becomes an $\N^{n-1}$-shift-stable subgroup of $(G^d)^{(\N^{n-1})}$.
The following theorem gives an algorithm that computes the generators for $I_{[0, d-1] \times \N^{n-1}} \leq (G^d)^{(\N^{n-1})}$.
Recall that the lexicographic monomial order $<_{\lex}$ on $\N^n$ is defined as $\alpha <_{\lex} \beta$ if and only if $
\alpha_1 = \beta_1, \ldots, \alpha_{k-1} = \beta_{k-1}, \alpha_k < \beta_k,
$ for some $1 \leq k \leq n$.

\begin{theorem}[Variable elimination]\label{thm:elimination_tile}
    Let $B$ be a standard basis for $I \leq \GNn$ with respect to the monomial order $<_{\lex}$, and let $d$ be a positive integer.
    Then the $\N^{n-1}$-shift-stable subgroup 
    \[
    I_{[0, d-1] \times \N^{n-1}} \leq (G^d)^{(\N^{n-1})}
    \]
    is generated by the finite set
    \[
    B_{[0, d-1]} \coloneqq \big\{ \li{t \varepsilon_1}b \;\big|\; b \in B,\; t < d,\; \supp(\li{t \varepsilon_1}b) \subseteq [0, d-1] \times \N^{n-1} \big\}.
    \]
\end{theorem}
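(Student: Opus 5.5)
We prove the two inclusions between $I_{[0, d-1] \times \N^{n-1}}$ and the $\N^{n-1}$-shift-stable subgroup generated by $B_{[0, d-1]}$, the key tool being the lead-reduction of Lemma~\ref{lem:reduction} with respect to $<_{\lex}$.

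\textbf{Easy inclusion.} Each $\li{t\varepsilon_1}b$ with $b \in B \subseteq I$ lies in $I$ by stability. By definition of $B_{[0,d-1]}$, its support lies in $[0,d-1]\times\N^{n-1}$. Shifting by vectors of the form $(0,\beta)$ with $\beta \in \N^{n-1}$ preserves both properties. So the subgroup generated by $B_{[0,d-1]}$ under $\N^{n-1}$-shifts is contained in $I_{[0, d-1] \times \N^{n-1}}$.

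\textbf{Hard inclusion.} Take $f \in I$ with $\supp(f) \subseteq [0,d-1]\times\N^{n-1}$. Since $B$ is a standard basis, Theorem~\ref{thm:membership} shows that every reduction of $f$ by $B$ is trivial. The reduction procedure of Lemma~\ref{lem:reduction} therefore produces
\[
f = \li{\alpha_1}f_1 \cdots \li{\alpha_m}f_m, \qquad f_i \in B^{\pm},\ \alpha_i \in \N^n,\ \multideg(\li{\alpha_i}f_i) \leq_{\lex} \multideg(f).
\]
The key point is what the lexicographic order buys us. Since $\multideg(f)$ has first coordinate at most $d-1$, each $\multideg(\li{\alpha_i}f_i)$ also has first coordinate at most $d-1$. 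Because $<_{\lex}$ compares first coordinates first, the multidegree of a tile has maximal first coordinate among its support. Hence $\supp(\li{\alpha_i}f_i) \subseteq [0,d-1]\times\N^{n-1}$ for every $i$.

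Next we rewrite each factor in the required form. Write $\alpha_i = t_i\varepsilon_1 + (0,\beta_i)$ with $t_i \in \N$ and $\beta_i \in \N^{n-1}$. The tile $\li{t_i\varepsilon_1}f_i$ has the same first-coordinate range as $\li{\alpha_i}f_i$, so it is supported in $[0,d-1]\times\N^{n-1}$. It follows that $t_i \leq d-1$, and hence $\li{t_i\varepsilon_1}f_i^{\pm1} \in B_{[0,d-1]}$ up to inversion. Then $\li{\alpha_i}f_i$ is the $\N^{n-1}$-shift by $\beta_i$ of this element, and $f$ lies in the $\N^{n-1}$-shift-stable subgroup generated by $B_{[0,d-1]}$.

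\textbf{Finiteness and the main obstacle.} The set $B_{[0,d-1]}$ is finite because $B$ is finite and $t<d$. The step needing the most care is the reduction step: one must check that every factor produced in each round of Lemma~\ref{lem:reduction} satisfies the multidegree bound relative to the original $f$, not merely relative to the current remainder. This holds because the multidegrees of the successive remainders decrease, so each factor's bound is inherited from $\multideg(f)$. Finally, the identification with $(G^d)^{(\N^{n-1})}$ via $f\mapsto\hat f$ is compatible with $\N^{n-1}$-shifts, so the statement transfers directly.
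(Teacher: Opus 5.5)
Your proposal is correct and follows the paper's proof essentially step for step. You trivially reduce $f$ by the $<_{\lex}$ standard basis via Theorem~\ref{thm:membership}, use that $\beta \leq_{\lex} \gamma$ forces $\beta_1 \leq \gamma_1$ to confine every factor to $[0,d-1]\times\N^{n-1}$, and split each shift vector as $t_i\varepsilon_1 + (0,\beta_i)$ to land in the $\N^{n-1}$-shifts of $B_{[0,d-1]}^{\pm}$; you also spell out a few points the paper leaves implicit, such as why every support point lies below the multidegree and why $t_i < d$.
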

\begin{proof}
    The proof is similar to the classical setting of ideals, cf.~\cite[Chapter~3.1, Theorem~2]{CoxLittleOShea2015}.
    
    By definition we have 
    $
    B_{[0, d-1]} \subseteq I_{\N^{n-1} \times [0, d-1]},
    $
    so it suffices to prove $I_{[0, d-1] \times \N^{n-1}} \subseteq \gen{B_{[0, d-1]}}$.
    Take $f \in I_{[0, d-1] \times \N^{n-1}}$.
    Since $f \in I$ and $B$ is a standard basis for $I$, the reduction of $f$ by $B$ is trivial (Theorem~\ref{thm:membership}).
    That is, we can write $f = \li{\alpha_1}b_1 \cdots \li{\alpha_k}b_k$, where $b_i \in B^{\pm}$ and
    \[
    \multideg(\li{\alpha_i}b_i) \leq_{\lex} \multideg(f).
    \]
    By the definition of $<_{\lex}$ we have $\beta \leq_{\lex} \gamma \implies \beta_1 \leq \gamma_1$.
    Since $\supp(f) \subseteq [0, d-1] \times \N^{n-1}$, we have
    \[
    \supp(\li{\alpha_i}b_i) \subseteq [0, d-1] \times \N^{n-1}.
    \]
    Write $\alpha_i = t \varepsilon_1 + \alpha$ with $t \in \N, \alpha \in \{0\} \times \N^{n-1}$.
    Then $\li{\alpha_i}b_i = \li{\alpha}{\big(\li{t \varepsilon_1}b_i\big)}$ where $\li{t \varepsilon_1}b_i \in B_{[0, d-1]}^{\pm}$.
    Hence $f = \li{\alpha_1}b_1 \cdots \li{\alpha_k}b_k$ is in the $\N^{n-1}$-shift-stable subgroup of $(G^d)^{(\N^{n-1})}$ generated by $B_{[0, d-1]}$.
    We conclude that $I_{[0, d-1] \times \N^{n-1}} \subseteq \gen{B_{[0, d-1]}}$.
\end{proof}

Note that by permuting the indices, Theorem~\ref{thm:elimination_tile} also allows us to eliminate the variable $\varepsilon_n$ instead of $\varepsilon_1$.
That is, we can also compute the generators of 
\[
I_{\N^{n-1} \times [0, d-1]} \coloneqq \big\{f \in I \;\big|\; \supp(f) \subseteq \N^{n-1} \times [0, d-1] \big\} \leq G^{(\N^{n-1} \times [0, d-1])} \cong (G^d)^{(\N^{n-1})}.
\]

\subsection{Nested subgroup membership}\label{subsec:nested}
Finally, we define the problem of \emph{nested subgroup membership} in $\GZn$ and provide an algorithmic solution.
This problem will be the key to the resolution of Subgroup Membership in $G \wr \Z^n$ in the next section.
%In this next section, we will reduce the Subgroup Membership problem in $G \wr \Z^n$ to nested subgroup membership.

Informally, a \emph{nested subgroup} of $\GZn$ is a subgroup generated by different sets of tiles in different subspaces and halfspaces.
Formally, let $B_0, B_{1, +}, B_{1, -} \ldots, B_{n, +}, B_{n, -}$ be $(2n+1)$ sets of tiles in $\GZn$.
The \emph{nested subgroup}
\begin{equation}\label{eq:nested_def}
\gen{B_0 \,\middle|\, B_{1, +}, B_{1, -} \,\middle|\, \cdots \,\middle|\, B_{n, +}, B_{n, -}}
\end{equation}
is the subgroup of $\GZn$ generated by the set
\begin{multline*}
\Big\{b \;\Big|\; b \in B_0 \Big\} \cup \bigcup_{m=1}^n \Big\{ \li{\alpha}b \;\Big|\; b \in B_{m, +},\; \alpha \in \Z^{m-1} \times \N \times \{0\}^{n-m} \Big\} \\ \cup \bigcup_{m=1}^n \Big\{ \li{\alpha}b \;\Big|\; b \in B_{m, -},\; \alpha \in \Z^{m-1} \times (-\N) \times \{0\}^{n-m} \Big\}.
\end{multline*}
In other words, elements in the nested subgroup~\eqref{eq:nested_def} are products
\begin{equation*}
\li{\alpha_1}f_1 \cdot \li{\alpha_2}f_2 \cdot \cdots \cdot \li{\alpha_k}f_k,
\end{equation*}
where $k \in \N$, and
\begin{enumerate}[nosep, label=(\roman*)]
    \item $f_1, \ldots, f_k \in B_0^{\pm} \cup B_{1, +}^{\pm} \cup B_{1, -}^{\pm} \cup \cdots \cup B_{n, +}^{\pm} \cup B_{n, -}^{\pm}$,
    \item if $f_i \in B_0^{\pm}$, then $\alpha_i = 0^n$.
    \item if $f_i \in B_{m, +}^{\pm}$, then $\alpha_i \in \Z^{m-1} \times \N \times \{0\}^{n-m}$,
    \item if $f_i \in B_{m, -}^{\pm}$, then $\alpha_i \in \Z^{m-1} \times (-\N) \times \{0\}^{n-m}$.
\end{enumerate}
\smallskip
That is, for $m = 1, \ldots, n$, the tiles in $B_{m, +}$ can be translated by any integer in the first $m-1$ coordinates, by non-negative integers in the $m$-th coordinate, and they cannot be translated in the last $n-m$ coordinates.
Tiles in $B_{m, -}$ are similar, except they can only be translated by non-positive integers in the $m$-th coordinate.
See Figure~\ref{fig:nested} for an illustration.

\begin{figure}[h!]
    \centering
    \includegraphics[width=0.75\textwidth,height=0.75\textheight,keepaspectratio, trim={2.91cm 1.38cm 2.68cm 1.07cm},clip]{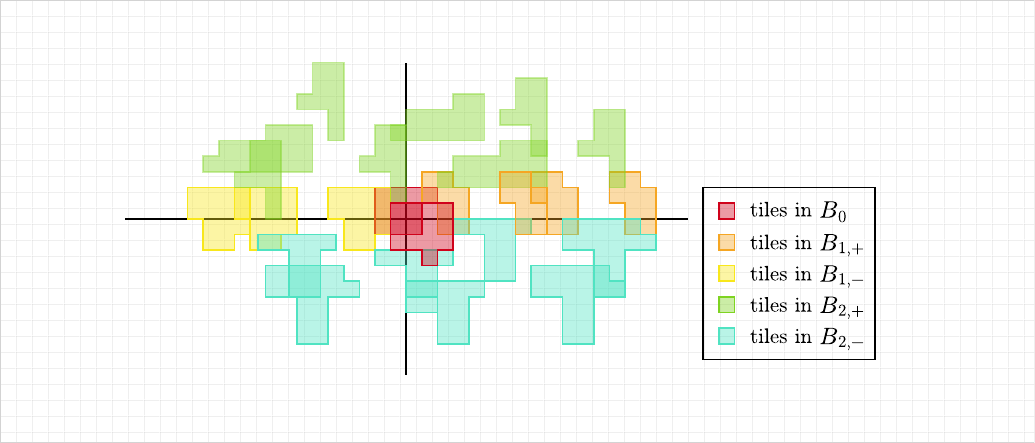}
    \caption{An element in the nested subgroup $\gen{B_0 \,\middle|\, B_{1, +}, B_{1, -} \,\middle|\, B_{2, +}, B_{2, -}} \leq G^{(\Z^2)}$.}
    \label{fig:nested}
\end{figure}

The \emph{nested subgroup membership} problem is the following: given finite sets $B_0, B_{1, +}, B_{1, -}, \ldots,$ $B_{n, +}, B_{n, -} \subset \GZn$ and $f \in \GZn$, decide whether $f \in \gen{B_0 \,\middle|\, B_{1, +}, B_{1, -} \,\middle|\, \cdots \,\middle|\, B_{n, +}, B_{n, -}}$.
In this section we give an algorithmic solution to this problem.
First we describe the overall idea in the abelian setting with $G = \Z$.
Then, we will generalize the idea to finite non-abelian $G$ and give a detailed construction of the algorithm.
In both settings, we will combine saturation and variable elimination, and use induction on the dimension $n$.

\paragraph{Nested subgroup membership for abelian $G$.}
We sketch an algorithm for the nested subgroup membership problem when $G$ is the abelian group $(\Z, +)$.
In this case, we consider $\GZn$ as the additive group of the Laurent polynomial ring $\Z[X_1^{\pm}, \ldots, X_{n}^{\pm}]$:
\begin{align*}
\GZn = \Z^{(\Z^n)} & \xrightarrow{\sim} (\Z[X_1^{\pm}, \ldots, X_{n}^{\pm}], +), \\
f & \mapsto \sum_{\alpha \in \Z^n} f(\alpha) X^{\alpha}.
\end{align*}
Under this identification, the nested subgroup 
\[
\gen{B_0 \,\middle|\, B_{1, +}, B_{1, -} \,\middle|\, \cdots \,\middle|\, B_{n, +}, B_{n, -}} \leq \Z[X_1^{\pm}, \ldots, X_{n}^{\pm}]
\]
is the set of sums of the form 
\begin{equation}\label{eq:nested_classic}
f_0 b_0 + \sum_{m = 1}^n \sum_{i}f_{m, +, i} b_{m, +, i} + \sum_{m = 1}^n \sum_{i} f_{m, -, i} b_{m, -, i},
\end{equation}
where 
\begin{multline*}
f_0 \in \Z,\; f_{m, +, i} \in \Z[X_1^{\pm}, \ldots, X_{m-1}^{\pm}, X_m], \; f_{m, -, i} \in \Z[X_1^{\pm}, \ldots, X_{m-1}^{\pm}, X_m^{-1}], \\
b_0 \in B_0,\; b_{m, +, i} \in B_{m, +},\; b_{m, -, i} \in B_{m, -}.
\end{multline*}
%The membership problem in this set can be solved by ideal saturation and variable elimination.
Given $f \in \Z[X_1^{\pm}, \ldots, X_{n}^{\pm}]$, we now decide whether $f \in \gen{B_0 \,\middle|\, B_{1, +}, B_{1, -} \,\middle|\, \cdots \,\middle|\, B_{n, +}, B_{n, -}}$.
Take a positive integer $d$ such that
\[
\supp(h) \subseteq \Z^{n-1} \times [-d, d], \quad  \text{ for all } h \in \{f\} \cup B_0 \cup B_{1, +} \cup \cdots \cup B_{n, -}.
\]
Thus, if $f$ can be written as a sum~\eqref{eq:nested_classic}, then we must have
\[
\supp\left(\sum_{i}f_{n, +, i} b_{n, +, i}\right) \subseteq \Z^{n-1} \times [-d, d], \quad \supp\left(\sum_{i}f_{n, -, i} b_{n, -, i}\right) \subseteq \Z^{n-1} \times [-d, d].
\]
The set
\[
\left\{\sum_{i}f_{n, +, i} b_{n, +, i} \;\middle|\; f_{n, +, i} \in \Z[X_1^{\pm}, \ldots, X_{n-1}^{\pm}, X_n], b_{n, +, i} \in B_{n, +}\right\} \cap \big\{h \;\big|\; \supp(h) \subseteq \Z^{n-1} \times [-d, d] \big\}
\]
is a $\Z[X_1^{\pm}, \ldots, X_{n-1}^{\pm}]$-module, and one can compute a finite generating set $B'_{n, +}$ for it using saturation and variable elimination (cf.~Observation~\ref{obs:negative_saturation_d} and Theorem~\ref{thm:elimination_tile}).
Similarly, one can define and compute $B'_{n, -}$.
Thus, $f$ can be written as a sum~\eqref{eq:nested_classic} if and only if it can be written as
\[
f = f_0 b_0 + \sum_{m = 1}^{n-1} \sum_{i}f_{m, +, i} b_{m, +, i} + \sum_{m = 1}^{n-1} \sum_{i} f_{m, -, i} b_{m, -, i} + \sum_{b \in B'_{n, +}} f_{b, +} b + \sum_{b \in B'_{n, -}} f_{b, -} b,
\]
where $f_{b, +}, f_{b, -} \in \Z[X_1^{\pm}, \ldots, X_{n-1}^{\pm}]$.
Consequently,
\begin{gather*}
f \in \gen{B_0 \,\middle|\, B_{1, +}, B_{1, -} \,\middle|\, \cdots \,\middle|\, B_{n, +}, B_{n, -}} \\
\iff \\
f \in \gen{B_0 \,\middle|\, B_{1, +}, B_{1, -} \,\middle|\, \cdots \,\middle|\, B_{n-1, +} \cup (B'_{n, +} \cup B'_{n, -}), B_{n-1, -} \cup (B'_{n, +} \cup B'_{n, -}) \,\middle|\, \emptyset, \emptyset}.
\end{gather*}
We can thus obtain an algorithm by induction on the dimension $n$.

In what follows, for $0 \leq m < n$, we will write $\gen{B_0 \mid B_{1, +}, B_{1, -} \mid \cdots \mid B_{m, +}, B_{m, -}}$ for 
\[
\gen{B_0 \mid B_{1, +}, B_{1, -} \mid \cdots \mid B_{m, +}, B_{m, -} \mid \emptyset, \ldots, \emptyset}.
\]

\paragraph{Nested subgroup membership for arbitrary finite $G$.}
The finite non-abelian setting is more subtle: as addition is replaced by a non-abelian operation, one can no longer write elements in the nested subgroup as a sum~\eqref{eq:nested_classic}.
We need to consider the order in which the tiles are applied.
Therefore we need to extend the abelian setting by taking into account conjugation of the tiles.

\begin{theorem}[Nested subgroup membership]\label{thm:nested_tile}
    There is an algorithm which, given as input a finite group $G$, finite sets of tiles $B_0, B_{1, +}, B_{1, -} \ldots, B_{n, +}, B_{n, -}$ in $\GZn$, as well as $f \in \GZn$, decide whether 
    \[
    f \in \gen{B_0 \,\middle|\, B_{1, +}, B_{1, -} \,\middle|\, \cdots \,\middle|\, B_{n, +}, B_{n, -}}.
    \]
\end{theorem}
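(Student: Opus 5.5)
We argue by induction on $n$, mirroring the abelian sketch and accounting for conjugation. For $n=0$ the group $\GZn = G$ is finite and the nested subgroup is generated by $B_0$, so membership is decided by enumerating the finite subgroup. For the inductive step we eliminate the top level $B_{n,+}, B_{n,-}$. The main point is to replace them by finitely many new tiles, placed at level $n-1$ (and in $B_0$), that generate the same nested subgroup after intersecting with a slab $\Z^{n-1}\times[-d,d]$.

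First we choose $d$ so that $f$ and every tile in $B_0 \cup \bigcup_m B_{m,\pm}$ are supported in $\Z^{n-1}\times[-d,d]$. Let $H_+ \coloneqq \gen{B_{n,+}}_{\Z^{n-1}\times\N}$ and $H_-$ be defined analogously, and let $L$ be the subgroup generated by $B_0$ and the lower levels; every element of $L$ lives in the slab. A product in the nested subgroup alternates factors from $L$, $H_+$ and $H_-$. Since $G$ is non-abelian, we cannot simply regroup these factors. Instead, we move every factor $\li{\alpha}b$ with $b\in B_{n,+}$ and $\alpha_n$ large to one side by conjugation, as in~\eqref{eq:f_collect_LT}. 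Conjugating by a factor supported in the slab does not change the translated tile outside the slab. Concretely, we decompose $\li{\alpha}b$ as its part outside the slab times a conjugated part inside, and we aim to show that the nested subgroup equals the one generated by $L$, by $H_+ \cap G^{(\Z^{n-1}\times[-d,d])}$, by $H_-\cap G^{(\Z^{n-1}\times[-d,d])}$, and by the tiles that arise from interactions of $H_+$ and $H_-$ with $L$. The cleanest route I expect is the following. Note that $H_+$ is normalized by conjugation only up to the slab region, so we replace $H_+$ by the $(\Z^{n-1}\times\N)$-stable subgroup $\widehat H_+$ generated by $B_{n,+}$ together with all conjugates $b^{\ell}$ for $b \in B_{n,+}$ and $\ell$ a product of elements of $L$ restricted to a bounded window. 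There are finitely many such conjugates, because supports are preserved and $G$ is finite, and they can be computed by closure as in the Claim in the proof of Theorem~\ref{thm:compute_tile_basis}. After this enlargement, any element of the nested subgroup can be written as $x\cdot y$ with $x$ a product of elements of $\widehat H_+$ and $\widehat H_-$ and $y\in L$, up to conjugates that stay in these groups.

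Second, we compute the slab intersections effectively. After translating by $d\varepsilon_n$, the group $\widehat H_+\cap G^{(\Z^{n-1}\times[-d,d])}$ becomes $J_{\Z^{n-1}\times[0,2d]}$, where $J = \gen{S}_{\Z^{n-1}\times\N}$. By Observation~\ref{obs:negative_saturation_d}, $J$ is described by the saturation $I\colon(\varepsilon_1,\ldots,\varepsilon_{n-1})^\infty$, which Theorem~\ref{thm:Bayer_tile} computes. We then obtain a standard basis of this saturated subgroup with respect to the lexicographic order in which $\varepsilon_n$ is eliminated (Theorem~\ref{thm:compute_tile_basis}), and apply Theorem~\ref{thm:elimination_tile} in the permuted form for $\N^{n-1}\times[0,2d]$. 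This yields finitely many generators. Since the saturation is taken in the first $n-1$ directions, these generators are $\Z^{n-1}$-shift-stable inside the slab, and we translate them back by $-d\varepsilon_n$. We handle $\widehat H_-$ symmetrically by reflecting the $n$-th coordinate. Call the resulting sets $B'_{n,+}$ and $B'_{n,-}$.

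Third, we reduce the dimension. We claim $f$ lies in the original nested subgroup if and only if it lies in $\gen{B_0 \mid \cdots \mid B_{n-1,\pm}\cup B'_{n,+}\cup B'_{n,-}}$. Tiles in this group still use $n$ coordinates but translate only in the first $n-1$, so we view $\Z^{n-1}\times[-d,d]$ tiles as elements of $(G^{2d+1})^{(\Z^{n-1})}$ and recurse with the finite group $G^{2d+1}$. The theorem is uniform in $G$, so the induction hypothesis applies. The main obstacle is the ``only if'' direction of this claim in the non-abelian case. We must show that a product whose factors leave the slab, then cancel outside it, can be rewritten using only slab-supported elements of $\widehat H_\pm$, together with conjugation by $L$. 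The issue is that factors of $H_+$ and $H_-$ interleaved with each other may cancel outside the slab only jointly; $H_+$ lives above and $H_-$ below, so the parts outside the slab should be separable. I would attack this by ordering the factors by $n$-th coordinate and applying the same conjugation trick as in Theorem~\ref{thm:criterion_tile} to show that the outside-slab parts of the $H_+$ factors multiply to the identity on their own. If that fails, I would enlarge the conjugate sets used to build $\widehat H_\pm$ until the argument goes through.
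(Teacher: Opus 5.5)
Your proposal has a genuine gap, in fact two; the second is the one you flag yourself.

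\textbf{The conjugation goes in the wrong direction.} You move the $L$-factors to the right, so the top-level factors $\li{\alpha}b$, $b\in B_{n,+}$, get conjugated by elements of $L$. But $(\li{\alpha}b)^{\ell}=\li{\alpha}{\big(b^{\li{-\alpha}\ell}\big)}$, and $\li{-\alpha}\ell$ depends on $\alpha$. It lies in no fixed group, because $L$ is not shift-stable (tiles of $B_0$ cannot be translated at all). A $(\Z^{n-1}\times\N)$-stable $\widehat H_+$ that contains the conjugates $b^{\ell}$ also contains all their translates, and these need not lie in the nested subgroup $N$.

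Concretely, take $n=1$, $G=S_3$, $B_0=\{\delta_{(13)}\}$, $B_{1,+}=\{\delta_{(12)}\}$, $B_{1,-}=\emptyset$, where $\delta_g$ is the monomial with value $g$ at $0$. Every element of $N$ takes values in $\{e,(12)\}$ at positions $k\geq 1$. Yet $\widehat H_+$ contains $\delta_{(12)}^{\delta_{(13)}}=\delta_{(23)}$, hence also $\li{1}{\delta_{(23)}}$, which is supported in the slab. So your reduced problem accepts a non-member.

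The paper conjugates the other way. It moves the top-level factors to the \emph{left}, so the lower-level tiles are conjugated by $I=\gen{B_{n,+}}_{\Z^{n-1}\times\N}\cdot\gen{B_{n,-}}_{\Z^{n-1}\times-\N}$. Since $I$ is stable under $\Z^{n-1}\times\{0\}$ and lower-level translations have $n$-th coordinate $0$, one gets $(\li{\alpha}b)^h=\li{\alpha}{\big(b^{\li{-\alpha}h}\big)}$ with $\li{-\alpha}h\in I$. Thus the lower levels can be replaced by the finite, position-independent, computable sets $\{b^h\mid b\in B, h\in I\}$. Soundness is automatic because $I\subseteq N$.

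\textbf{The $H_+$/$H_-$ interleaving is not resolved.} Without a preliminary normalization, translates of $B_{n,+}$ and $B_{n,-}$ overlap inside the slab and do not commute, so you cannot collect them into a product $uv$ at all. Ordering by the $n$-th coordinate does not fix this, and ``enlarging the conjugate sets until the argument goes through'' has no termination guarantee.

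The missing idea is the paper's Step~1. Choose $\beta\in\N^n$ and $\beta'\in\N^{n-1}\times(-\N)$ with $\supp(\li{\beta}b)\subseteq\N^{n-1}\times[1,\infty)$ for $b\in B_{n,+}$ and $\supp(\li{\beta'}b)\subseteq\N^{n-1}\times(-\infty,-1]$ for $b\in B_{n,-}$. Replace the top-level tiles by these translates. Add the finitely many intermediate translates $\li{t\varepsilon_n}b$ ($0\le t<\beta_n$, resp.\ $\beta'_n<t\le 0$) to all lower-level sets; this does not change $N$.

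After this, every element of $\gen{B_{n,+}}_{\Z^{n-1}\times\N}$ commutes with every element of $\gen{B_{n,-}}_{\Z^{n-1}\times-\N}$, since their supports are disjoint. Hence $I$ is a group, and $f\in N$ can be written as $f=uvt$ with $t$ in the nested subgroup built from the conjugated sets $B^I$. Support considerations then force $u$ into $G^{(\Z^{n-1}\times[1,d])}$ and $v$ into $G^{(\Z^{n-1}\times[-d,-1])}$. Only at this point do your saturation-plus-elimination step (which matches the paper's computation of $B_>$ and $B_<$) and the recursion over $G^{2d+1}$ apply.
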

\begin{proof}
    We use induction on the dimension $n$.
    The base case $n = 0$ is equivalent to determining whether $f$ is in the subgroup of $G$ generated by $B_0$, and is decidable by enumerating all elements of this finite subgroup.
    For the induction step, suppose we have an algorithm for dimension $n-1$, we now construct an algorithm for dimension $n$.
    \smallskip

    \textbf{Step 1.}
    The first observation is that, by enlarging the sets $B_0, B_{1, +}, B_{1, -}, \ldots, B_{n-1, +}, B_{n-1, -}$, we can without loss of generality suppose that tiles in $B_{n, +}$ are supported in the orthant
    \begin{equation}\label{eq:orthant_pos}
    \N^{n-1} \times [1, \infty),
    \end{equation}
    and that tiles in $B_{n, -}$ are supported in the orthant
    \begin{equation}\label{eq:orthant_neg}
    \N^{n-1} \times (-\infty, -1].
    \end{equation}
    Indeed, let $\beta = (\beta_1, \ldots, \beta_n) \in \N^n$ be a vector such that 
    \[
    \supp(\li{\beta}b) \subseteq \N^{n-1} \times [1, \infty) \quad \text{ for every } b \in B_{n, +}.
    \]
    Let $\beta' = (\beta'_1, \ldots, \beta'_n) \in \N^{n-1} \times (-\N)$ be a vector such that 
    \[
    \supp(\li{\beta'}b) \subseteq \N^{n-1} \times (-\infty, -1] \quad \text{ for every } b \in B_{n, -}.
    \]
    We do the following:
    \begin{enumerate}[nosep, label=(\roman*)]
        \item Replace every tile $b$ in $B_{n, +}$ by its translation $\li{\beta}b$.
        \item Replace every tile $b$ in $B_{n, -}$ by its translation $\li{\beta'}b$.
        \item Add the tiles
        $
        \li{t \varepsilon_n}b, \; 0 \leq t < \beta_n,\; b \in B_{n, +}
        $,
        and the tiles
        $
        \li{t \varepsilon_n}b, \; \beta'_n < t \leq 0,\; b \in B_{n, -}
        $,
        into each of the remaining sets $B_0, B_{1, +}, B_{1, -}, \ldots, B_{n-1, +}, B_{n-1, -}$.
    \end{enumerate}
    \smallskip
    Doing these simultaneously, we do not change the nested subgroup $\gen{B_0 \,\middle|\, B_{1, +}, B_{1, -} \,\middle|\, \cdots \,\middle|\, B_{n, +}, B_{n, -}}$.
    As a result, we have $\supp(b) \subseteq \N^{n-1} \times [1, \infty)$ for all $b \in B_{n, +}$, and $\supp(b) \subseteq \N^{n-1} \times (-\infty, -1]$ for all $b \in B_{n, -}$.
    In particular, we have
    \[
    \supp(b_{+}) \cap \supp(b_{-}) = \emptyset
    \]
    for all $b_+ \in B_{n, +}$ and $b_- \in B_{n, -}$.
    \smallskip

    \textbf{Step 2.}
    With the assumption of Step~1, we construct an algorithm for nested subgroup membership by induction on $n$.
    Every element in $\gen{B_0 \,\middle|\, B_{1, +}, B_{1, -} \,\middle|\, \cdots \,\middle|\, B_{n, +}, B_{n, -}}$ can be written as
    \begin{equation}\label{eq:nested_f_product}
    t_0 u_1 v_1 t_1 u_2 v_2 t_2 \cdots u_s v_s t_s, 
    \end{equation}
    where $s \in \N$, and for each $i$,
    \begin{enumerate}[nosep, label=(\roman*)]
        \item $t_i \in \gen{B_0 \,\middle|\, B_{1, +}, B_{1, -} \,\middle|\, \cdots \,\middle|\, B_{n-1, +}, B_{n-1, -}}$.
        \item $u_i$ is either trivial or of the form $\li{\alpha}b,\; b \in B_{n, +}^{\pm},\; \alpha \in \Z^{n-1} \times \N$.
        \item $v_i$ is either trivial or of the form $\li{-\alpha}b,\; b \in B_{n, -}^{\pm},\; \alpha \in \Z^{n-1} \times \N$.
    \end{enumerate}
    \smallskip
    In other words, the factors $u_i$ are the appearances of translations of tiles in $B_{n, +}^{\pm}$, and the factors $v_i$ are the appearances of translations of tiles in $B_{n, -}^{\pm}$.
    The factors $t_i$ are products of the appearances of the remaining tiles.
    
    The key observation is that $u_i$ and $v_j$ commute for all $i, j$: from the previous step we have $\supp(u_i) \cap \supp(v_i) = \emptyset$, so $u_i v_j = v_j u_i$.
    In the product~\eqref{eq:nested_f_product}, moving the factors $u_i$ and $v_i$ to the left using conjugation, we obtain
    \begin{align}\label{eq:nested_product_grouped}
        & t_0 u_1 v_1 t_1 u_2 v_2 t_2 \cdots u_s v_s t_s \nonumber \\
        =\; & u_1 v_1 u_2 v_2 \cdots u_s v_s \cdot \left(t_0^{u_1 v_1 \cdots u_s v_s} \cdot t_1^{u_2 v_2 \cdots u_s v_s} \cdot \cdots \cdot t_s\right) \nonumber \\
        =\; & \underbrace{(u_1 u_2 \cdots u_s)}_{\in \gen{B_{n, +}}_{\Z^{n-1} \times \N}} \cdot \underbrace{(v_1 v_2 \cdots v_s)}_{\in \gen{B_{n, -}}_{\Z^{n-1} \times -\N}} \cdot \underbrace{\left(t_0^{u_1 \cdots u_s v_1 \cdots v_s} \cdot t_1^{u_2 \cdots u_s v_2 \cdots v_s} \cdot \cdots \cdot t_s\right)}_{\in \gen{B_0^I \,\middle|\, B_{1, +}^I, B_{1, -}^I \,\middle|\, \cdots \,\middle|\, B_{n-1, +}^I, B_{n-1, -}^I}}.
    \end{align}
    Here, $B^I$ denotes the set $\{b^h \mid b \in B, h \in I\}$, where
    \[
    I \coloneqq \gen{B_{n, +}}_{\Z^{n-1} \times \N} \cdot \gen{B_{n, -}}_{\Z^{n-1} \times -\N} = \left\{uv \;\middle|\; u \in \gen{B_{n, +}}_{\Z^{n-1} \times \N}, v \in \gen{B_{n, -}}_{\Z^{n-1} \times -\N} \right\}.
    \]
    %Therefore, we have $f \in \gen{B_0 \,\middle|\, B_{1, +}, B_{1, -} \,\middle|\, \cdots \,\middle|\, B_{n, +}, B_{n, -}}$ if and only if $f$ can be written as a product~\eqref{eq:nested_product_grouped}.
    
    Given $f \in \GZn$, we now decide whether $f \in \gen{B_0 \,\middle|\, B_{1, +}, B_{1, -} \,\middle|\, \cdots \,\middle|\, B_{n, +}, B_{n, -}}$.
    Let $d$ be a positive integer such that
    \[
    \supp(h) \subseteq \Z^{n-1} \times [-d, d] \quad \text{ for all } h \in \{f\} \cup B_0 \cup B_{1, +} \cup \cdots \cup B_{n, -}.
    \]
    If $f$ can be written in the form~\eqref{eq:nested_product_grouped}, then we must have 
    \[
    \supp(u_1 u_2 \cdots u_s) \subseteq \Z^{n-1} \times [1, d], \quad \text{ and } \quad \supp(v_1 v_2 \cdots v_s) \subseteq \Z^{n-1} \times [-d, -1].
    \]
    By Observation~\ref{obs:negative_saturation_d}, we have 
    \begin{equation}\label{eq:sat_g_pos}
    \gen{B_{n, +}}_{\Z^{n-1} \times \N} = \left\{\li{\alpha} u \;\middle|\; \alpha \in \Z^{n-1} \times \{0\},\; u \in \gen{B_{n, +}} \colon (\varepsilon_1, \ldots, \varepsilon_{n-1})^{\infty} \right\}.
    \end{equation}
    By Theorem~\ref{thm:Bayer_tile} (subgroup saturation), we can compute a finite set of generators for the stable subgroup
    \[
    \gen{B_{n, +}} \colon (\varepsilon_1, \ldots, \varepsilon_{n-1})^{\infty} \leq \GNn.
    \]
    Then by Theorem~\ref{thm:elimination_tile} (variable elimination), we can compute a finite set of generators $B_> \subseteq G^{(\N^{n-1} \times [1, d])}$ for the intersection
    \[
    \big(\gen{B_{n, +}} \colon (\varepsilon_1, \ldots, \varepsilon_{n-1})^{\infty}\big) \cap G^{(\N^{n-1} \times [1, d])},
    \]
    considered as a $\N^{n-1}$-shift-stable subgroup of $(G^d)^{(\N^{n-1})}$.
    That is,
    \[
    \gen{B_>}_{\N^{n-1}} = \big(\gen{B_{n, +}} \colon (\varepsilon_1, \ldots, \varepsilon_{n-1})^{\infty}\big) \cap G^{(\N^{n-1} \times [1, d])}.
    \]
    Translating the above equation by $\Z^{n-1} = \Z^{n-1} \times \{0\}$ and applying Equation~\eqref{eq:sat_g_pos}, we obtain
    \begin{equation*}%\label{eq:nested_sat_pos}
    \gen{B_>}_{\Z^{n-1}} = \gen{B_{n, +}}_{\Z^{n-1} \times \N} \cap G^{(\Z^{n-1} \times [1, d])}.
    \end{equation*}

    Similarly, we can compute a finite set $B_< \subseteq G^{(\N^{n-1} \times [-d, -1])}$, such that
    \begin{equation*}%\label{eq:nested_sat_neg}
    \gen{B_<}_{\Z^{n-1}} = \gen{B_{n, -}}_{\Z^{n-1} \times -\N} \cap G^{(\Z^{n-1} \times [-d, -1])},
    \end{equation*}
    by simply reflecting over the $n$-th coordinate.
    This naturally leads to the following claim:

    \smallskip
    \begin{adjustwidth}{3mm}{3mm}
    \textbf{Claim.} We have $f \in \gen{B_0 \,\middle|\, B_{1, +}, B_{1, -} \,\middle|\, \cdots \,\middle|\, B_{n, +}, B_{n, -}}$, if and only if $f$ is in the following nested subgroup of $G^{(\Z^{n-1} \times [-d, d])} \cong (G^{2d + 1})^{(\Z^{n-1})}$:
    \begin{equation}\label{eq:nested_d_lowdim}
    \gen{B_0^{I} \,\middle|\, B_{1, +}^{I}, B_{1, -}^{I} \,\middle|\, \cdots \,\middle|\, B_{n-2, +}^{I}, B_{n-2, -}^{I} \,\middle|\, B_{n-1, +}^{I} \cup B_> \cup B_<, B_{n-1, -}^{I} \cup B_> \cup B_<}.
    \end{equation}
    Here, $B^I$ denotes the set $\{b^h \mid b \in B, h \in I\}$, where 
    \[
    I \coloneqq \gen{B_{n, +}}_{\Z^{n-1} \times \N} \cdot \gen{B_{n, -}}_{\Z^{n-1} \times -\N} = \left\{uv \;\middle|\; u \in \gen{B_{n, +}}_{\Z^{n-1} \times \N}, v \in \gen{B_{n, -}}_{\Z^{n-1} \times -\N} \right\} \leq \GZn.
    \]
    
    \noindent$\blacktriangleright$
    Obviously the nested subgroup~\eqref{eq:nested_d_lowdim} is contained in $\gen{B_0 \,\middle|\, B_{1, +}, B_{1, -} \,\middle|\, \cdots \,\middle|\, B_{n, +}, B_{n, -}}$.
    For the opposite implication, suppose $f \in \gen{B_0 \,\middle|\, B_{1, +}, B_{1, -} \,\middle|\, \cdots \,\middle|\, B_{n, +}, B_{n, -}}$.
    Then by Equation~\eqref{eq:nested_product_grouped}, $f$ can be written as a product $uvt$, where 
    \[
    u \in \gen{B_{n, +}}_{\Z^{n-1} \times \N} \cap G^{(\Z^{n-1} \times [1, d])} = \gen{B_>}_{\Z^{n-1}},
    \]
    \[
    v \in \gen{B_{n, -}}_{\Z^{n-1} \times \N} \cap G^{(\Z^{n-1} \times [-d, -1])} = \gen{B_<}_{\Z^{n-1}},
    \]
    and
    \[
    t \in \gen{B_0^I \,\middle|\, B_{1, +}^I, B_{1, -}^I \,\middle|\, \cdots \,\middle|\, B_{n-1, +}^I, B_{n-1, -}^I}.
    \]
    Since $\Z^{n-1} = (\Z^{n-2} \times \N) \cup (\Z^{n-2} \times -\N)$, each $\Z^{n-1}$-translation of a tile in $B_>$ or $B_<$ is in fact a $(\Z^{n-2} \times \N)$-translation or a $(\Z^{n-2} \times -\N)$-translation.
    In other words,
    \[
    \gen{B_>}_{\Z^{n-1}} = \gen{\emptyset \,\middle|\, \emptyset, \emptyset \,\middle|\, \cdots \,\middle|\, B_>, B_>}, \quad \gen{B_<}_{\Z^{n-1}} = \gen{\emptyset \,\middle|\, \emptyset, \emptyset \,\middle|\, \cdots \,\middle|\, B_<, B_<}
    \]
    Therefore we have
    \[
    f = uvt \in \gen{B_0^{I} \,\middle|\, B_{1, +}^{I}, B_{1, -}^{I} \,\middle|\, \cdots \,\middle|\, B_{n-1, +}^{I} \cup B_> \cup B_<, B_{n-1, -}^{I} \cup B_> \cup B_<}.
    \]
    \hfill $\blacktriangleleft$
    \end{adjustwidth}
    \smallskip

    \noindent The above claim reduces the membership problem in the nested subgroup 
    \[
    \gen{B_0 \,\middle|\, B_{1, +}, B_{1, -} \,\middle|\, \cdots \,\middle|\, B_{n, +}, B_{n, -}} \subseteq \GZn
    \]
    in dimension $n$, to the membership problem in the nested subgroup
    \[
    \gen{B_0^{I} \,\middle|\, B_{1, +}^{I}, B_{1, -}^{I} \,\middle|\, \cdots \,\middle|\, B_{n-1, +}^{I} \cup B_> \cup B_<, B_{n-1, -}^{I} \cup B_> \cup B_<} \subseteq (G^{2d+1})^{\Z^{n-1}}
    \]
    in dimension $n-1$.
    We apply the induction hypothesis on $n-1$.
    Note that $G^{2d+1}$ is still a finite group, and the sets $B_0^{I}, B_{1, +}^{I}, B_{1, -}^{I}, \ldots, B_{n-1, -}^{I}$ are finite and computable using the same argument as the first claim in Theorem~\ref{thm:compute_tile_basis}.
    Therefore, we obtain an algorithm for deciding nested subgroup membership in dimension $n$.
\end{proof}

\section{Subgroup Membership in finite-wreath-abelian groups}\label{sec:membership}

In this section we prove decidability of Subgroup Membership in the wreath product $G \wr \Z^n$, by reducing it to nested subgroup membership in $\GZn$.
The gist of the reduction is an elementary but technical application of the Reidemeister-Schreier method~\cite[Chapter~II.4]{LyndonSchupp1977} (see also~\cite{Putman2022}).

Recall that the group $G \wr \Z^n$ consists of elements $(f, \alpha)$ where $f \in \GZn, \alpha \in \Z^n$, and multiplication is defined by
\[
(f, \alpha) \cdot (g, \beta) = (f \cdot \li{\alpha}g,\; \alpha + \beta).
\]
Intuitively, the element $(f, \alpha) \in G \wr \Z^n$ can be visualized as a tile $f$ together with a vector $\alpha$.
Left-multiplication by $(f, \alpha)$ corresponds to translating the current element by $\alpha$ and applying the tile $f$ on top.
See Figures~\ref{fig:wreathf}, \ref{fig:wreathg}, \ref{fig:wreathfg} for illustration.

\begin{figure}[ht!]
    \centering
    \begin{minipage}[t]{.27\textwidth}
        \centering
        \includegraphics[width=0.8\textwidth,height=0.8\textheight,keepaspectratio, trim={6.3cm 1.1cm 5.84cm 1.05cm},clip]{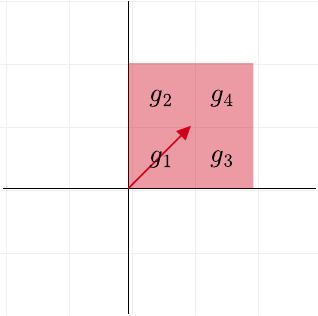}
        \caption{An element $h_1 = (f_1, (1,1)) \in G \wr \Z^2$}
        \label{fig:wreathf}
    \end{minipage}
    \hfill
    \begin{minipage}[t]{.27\textwidth}
        \centering
        \includegraphics[width=0.8\textwidth,height=0.8\textheight,keepaspectratio, trim={6.3cm 1.1cm 5.84cm 1.05cm},clip]{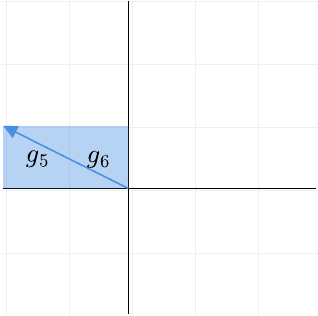}
        \caption{An element $h_2 = (f_2, (-2,0)) \in G \wr \Z^2$}
        \label{fig:wreathg}
    \end{minipage}
    \hfill
    \begin{minipage}[t]{0.27\textwidth}
        \centering
        \includegraphics[width=0.8\textwidth,height=0.8\textheight,keepaspectratio, trim={6.3cm 1.1cm 5.84cm 1.05cm},clip]{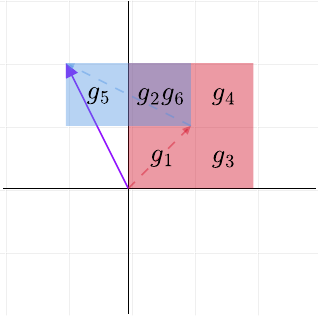}
        \caption{The product $h_1 h_2 = (f_1 \cdot \li{(1,1)}f_2, (-1,2))$.}
        \label{fig:wreathfg}
    \end{minipage}
\end{figure}

The group $G \wr \Z^n$ naturally contains the subgroup 
\[
\GZn \leq G \wr \Z^n,
\]
consisting of all elements of the form $(f, 0),\; f \in \GZn$.
As in the previous section, let 
\[
\varepsilon_1 = (1, 0, \ldots, 0),\; \ldots,\; \varepsilon_n = (0, \ldots, 0, 1),
\]
denote the canonical basis of $\Z^n$.

For a set of elements $S \subseteq G \wr \Z^n$, denote by $\gen{S}_{\gp}$ the subgroup generated by $S$.
Recall that the Subgroup Membership problem in $G \wr \Z^n$ takes as input a finite set $S \subset G \wr \Z^n$ and an element $h \in G \wr \Z^n$, and asks whether $h \in \gen{S}_{\gp}$.
In this paper, the finite group $G$ and the dimension $n$ can also be taken as part of the input.

The following lemma reduces Subgroup Membership in $G \wr \Z^n$ to a special case where $h \in \GZn$ and where $S$ is of a specific form:

\begin{lemma}\label{lem:subgroup_special}
    Subgroup Membership in $G \wr \Z^n$ reduces to the following decision problem:
    \smallskip
    \begin{adjustwidth}{3mm}{3mm}
    \textbf{Input:} A finite group $\widehat G$, an integer $d \leq n$, and elements $(g_1, \varepsilon_1), \ldots, (g_d, \varepsilon_d), (f_1, 0), \ldots, (f_k, 0)$, $(f, 0)$ in the wreath product $\widehat G \wr \Z^d$.
    
    \noindent \textbf{Question:} whether $(f, 0) \in \gen{(g_1, \varepsilon_1), \ldots, (g_d, \varepsilon_d), (f_1, 0), \ldots, (f_k, 0)}_{\gp}$.
    \end{adjustwidth}
\end{lemma}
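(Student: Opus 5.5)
Given $S = \{h_1, \ldots, h_k\} \subset G \wr \Z^n$ and $h$, write $h_i = (f_i', \gamma_i)$ and $h = (f', \gamma)$. The plan has three steps: pass to the image in $\Z^n$, normalize that image, and then absorb a transversal of the image into a larger finite group.

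\textbf{Step 1 (image in $\Z^n$).} Let $\pi \colon G \wr \Z^n \to \Z^n$ be the projection. Then $\Lambda \coloneqq \pi(\gen{S}_{\gp})$ is the subgroup of $\Z^n$ generated by $\gamma_1, \ldots, \gamma_k$. Using Smith/Hermite normal form, I compute a basis $\lambda_1, \ldots, \lambda_d$ of $\Lambda$ with $d \leq n$. If $\gamma \notin \Lambda$, answer ``no''. Otherwise, for each $\lambda_j$ I find an integer word in the $h_i$ whose projection is $\lambda_j$, and call the resulting element $c_j = (g_j', \lambda_j) \in \gen{S}_{\gp}$. I also pick a word $w$ with $\pi(w) = \gamma$ and replace $h$ by $w^{-1}h$, which lies in $\GZn$. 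So it suffices to decide membership of an element $(f,0)$.

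\textbf{Step 2 (Reidemeister--Schreier generators for the kernel).} The subgroup $H = \gen{S}_{\gp}$ is generated by $c_1, \ldots, c_d$ together with the elements $h_i \cdot c^{-\mathrm{word}(\gamma_i)}$, where $c^{\mathrm{word}(\gamma_i)}$ is the product $c_1^{m_1}\cdots c_d^{m_d}$ with $\gamma_i = \sum m_j \lambda_j$. These latter elements lie in $\GZn$; call them $(f_i, 0)$. Commutators $[c_j, c_\ell]$ also lie in $H \cap \GZn$, and I add them to the $f_i$ as well. Then $H$ is generated by $c_1, \ldots, c_d$ and finitely many $(f_i,0)$, with $h \in H$ if and only if $(f,0) \in H$.

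\textbf{Step 3 (making the lattice standard).} The main obstacle is to turn $\Lambda \subset \Z^n$ into $\Z^d$ with the standard basis. Extend $\lambda_1, \ldots, \lambda_d$ to a basis of $\Lambda \otimes \mathbb{Q} \cap \Z^n$, or more simply choose a finite set $T \subset \Z^n$ of coset representatives for $\Z^n/\Lambda$.

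Every tile $g \in \GZn$ decomposes as a tuple $(g_\tau)_{\tau \in T}$ of tiles on the cosets $\tau + \Lambda$. Identifying $\Lambda$ with $\Z^d$ via the basis $\lambda_j$, this gives an isomorphism
\[
\GZn \cong \widehat G^{(\Z^d)}, \qquad \widehat G \coloneqq G^{T}.
\]
This isomorphism is equivariant for $\Lambda$-shifts when $T$ is fixed; since $T$ is infinite if $d<n$, I need care here. I handle it by first noting that $H \cap \GZn$, the $f_i$, and $f$ only involve finitely many cosets, namely those meeting the supports of all the $f_i$, $f$ and $g_j'$. Translation by $\Lambda$ preserves cosets, so the other cosets never interact. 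I then take $T$ to be the finite set of cosets that are met. The remaining cosets must carry trivial entries in $f$, and otherwise I answer ``no''.

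After this restriction, each $c_j$ becomes $(g_j, \varepsilon_j) \in \widehat G \wr \Z^d$, each $(f_i,0)$ becomes a tile in $\widehat G^{(\Z^d)}$, and the shift action corresponds. Hence $\gen{S}_{\gp}$ maps isomorphically onto the subgroup of $\widehat G \wr \Z^d$ generated by the transformed elements, which is precisely the special form of Lemma~\ref{lem:subgroup_special}. All of these steps are effective, given the multiplication table of $G$.
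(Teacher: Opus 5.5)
Your proposal is correct, and your Steps 1--2 match the paper's Step 1. The extra commutators $[c_j,c_\ell]$ are harmless but unnecessary, since $h_i = \bigl(h_i c^{-\mathrm{word}(\gamma_i)}\bigr)\, c^{\mathrm{word}(\gamma_i)}$ already gives generation.

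The difference is in how you make the lattice standard.
\begin{itemize}
\item The paper completes $\alpha_1,\ldots,\alpha_d$ to a basis of a full-rank sublattice $L=\sum_{i=1}^n \Z\alpha_i$. It then embeds $H$ into $G^C \wr \Z^n$, where $C$ is a finite transversal of $\Z^n/L$. Only afterwards does it bound the supports in the last $n-d$ coordinates by $[-M,M]^{n-d}$, which gives $\widehat G = (G^C)^{[-M,M]^{n-d}}$.
\item You decompose $\Z^n$ directly into the infinitely many cosets of $\Lambda$. You observe that $H$ lies in $G^{(U)} \rtimes \Lambda$, where $U$ is the $\Lambda$-invariant union of the finitely many cosets met by the supports of the $g'_j$, $f_i$ (and $f$). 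Reindexing each coset by $\Z^d$ via the basis $\lambda_j$ then gives $\widehat G = G^{T_0}$ for a finite set $T_0$ of representatives.
\end{itemize}

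These are the same mechanism. A $\Lambda$-coset corresponds to a pair (element of $C$, point of $\Z^{n-d}$), so the paper's truncation to $[-M,M]^{n-d}$ is just an explicit way of keeping finitely many $\Lambda$-cosets. Your version skips the basis-completion step and keeps only the cosets actually met. In exchange, you must compute $\Lambda$-coset equivalence and $\Lambda$-coordinates of finitely many points, which is routine via Hermite normal form.

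Two small presentational slips:
\begin{itemize}
\item The phrase ``a finite set $T$ of coset representatives for $\Z^n/\Lambda$'' is wrong when $d<n$, as you yourself then notice.
\item If $T_0$ already includes the cosets meeting $\supp(f)$, the final check ``otherwise answer no'' is vacuous. It only has content if $T_0$ is chosen from the generators alone. Either choice is fine.
\end{itemize}
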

\begin{proof}
    Let $h_1, \ldots, h_k, h \in G \wr \Z^n$.
    For $H \coloneqq \gen{h_1, \ldots, h_k}_{\gp}$, we will decide whether $h \in H$.
    Consider the projection map
    \[
    \pi \colon G \wr \Z^n \rightarrow \Z^n, \quad (f, \alpha) \mapsto \alpha.
    \]
    Then $\pi(h_1), \ldots, \pi(h_k)$ generate the group $\pi(H) \leq \Z^n$.
    Let $d \coloneqq \dim\pi(H)$, and let 
    \[
    \alpha_1, \ldots, \alpha_d \in \pi(H)
    \]
    be a $\Z$-basis of $\pi(H)$.
    \smallskip
    
    \textbf{Step 1.} We reduce Subgroup Membership in $G \wr \Z^n$ to the special case where $h = (f, 0)$ and the subgroup generators are of the form $(g_1, \alpha_1), \ldots, (g_d, \alpha_d), (f_1, 0), \ldots, (f_k, 0)$.

    Since the vectors $\alpha_1, \ldots, \alpha_d$ are in $\pi(H) = \sum_{j=1}^k \Z \cdot \pi(h_j)$, we can find $c_{ij} \in \Z, i = 1, \ldots, d; j = 1, \ldots, k$, such that
    \[
    \alpha_i = \sum_{j=1}^k c_{ij} \pi(h_j), \quad \text{ for } i = 1, \ldots, d.
    \]
    Consider the elements
    \[
    (g_i, \alpha_i) \coloneqq \prod_{j=1}^k h_j^{c_{ij}} \in H, \quad i = 1, \ldots, d.
    \]
    Since the vectors $\alpha_1, \ldots, \alpha_d$ generate $\pi(H)$, we can find $r_{ij} \in \Z, i = 1, \ldots, d; j = 1, \ldots, k$, such that
    \[
    \pi(h_j) = \sum_{i=1}^d r_{ij} \alpha_i, \quad \text{ for } j = 1, \ldots, k.
    \]
    Define the elements
    \[
    (f_j, 0) \coloneqq h_j \cdot \prod_{i = 1}^d (g_i, \alpha_i)^{-r_{ij}}, \quad \text{ for } j = 1, \ldots, k.
    \]
    Then
    \begin{align*}
        H & = \gen{h_1, \ldots, h_k}_{\gp} \\
        & = \gen{(g_1, \alpha_1), \ldots, (g_d, \alpha_d), h_1, \ldots, h_k}_{\gp} \\
        & = \gen{(g_1, \alpha_1), \ldots, (g_d, \alpha_d), (f_1, 0), \ldots, (f_k, 0)}_{\gp}.
    \end{align*}

    In order to decide whether $h \in H$, consider two cases.
    If $\pi(h) \notin \pi(H)$, then obviously $h \notin H$.
    If $\pi(h) \in \pi(H)$, write $\pi(h) = \sum_{i=1}^d s_i \alpha_i$.
    Then $h \in H$ if and only if $(f, 0) \coloneqq h \cdot \prod_{i=1}^d (g_i, \alpha_i)^{-s_i}$ is in $H$.
    We have thus reduced the membership problem in $H$ to deciding whether
    \[
    (f, 0) \in \gen{(g_1, \alpha_1), \ldots, (g_d, \alpha_d), (f_1, 0), \ldots, (f_k, 0)}_{\gp}.
    \]

    \textbf{Step 2.} We reduce the above problem to Subgroup Membership in $\tilde G \wr \Z^n$ with finite $\tilde G$, where the generators are of the form $(g'_1, \varepsilon_1), \ldots, (g'_d, \varepsilon_d), (f'_1, 0), \ldots, (f'_k, 0)$.

    Let $H = \gen{(g_1, \alpha_1), \ldots, (g_d, \alpha_d), (f_1, 0), \ldots, (f_k, 0)}_{\gp}$.
    Complete $\alpha_1, \ldots, \alpha_d$ into a basis 
    \[
    \{\alpha_1, \ldots, \alpha_d, \alpha_{d+1}, \ldots, \alpha_n\}
    \]
    of an $n$-dimensional lattice.
    In other words, we find $\alpha_{d+1}, \ldots, \alpha_n \in \Z^n$ such that $\sum_{i = 1}^n \Z \alpha_i$ is a finite index subgroup of $\Z^n$.
    Let $C \subset \Z^n$ be a set of representatives of the cosets of $\sum_{i = 1}^n \Z \alpha_i$ in $\Z^n$.
    We obtain an injective group homomorphism
    \[
    \varphi \colon H \hookrightarrow G^C \wr \Z^n,
    \]
    which sends $(f, \beta) \in G \wr \Z^n$ to 
    \[
    (g, (b_1, \ldots, b_d, 0, \ldots, 0)) \in G^C \wr \Z^n,
    \]
    where $b_1, \ldots, b_d$ are the unique integers such that $\beta = \sum_{i = 1}^d b_i \alpha_i$, and $g \in (G^C)^{(\Z^n)}$ is defined as
    \[
    g(r_1, \ldots, r_n) = \left(f\left(\sum_{i = 1}^n r_i \alpha_i + c \right) \right)_{c \in C} \in G^C, \quad \forall (r_1, \ldots, r_n) \in \Z^n.
    \]
    The map $\varphi$ is injective because every $\beta \in \Z^n$ can be uniquely written as $\beta = \sum_{i = 1}^n r_i \alpha_i + c$ with $r_i \in \Z, c \in C$.
    Therefore, the question of whether $(f, 0) \in H$ reduces to whether $\varphi(f, 0) \in \varphi(H)$, where $\varphi(H) \leq G^C \wr \Z^n$ is generated by 
    \[
    \varphi(g_1, \alpha_1) = (g'_1, \varepsilon_1),\; \ldots,\; \varphi(g_d, \alpha_d) = (g'_d, \varepsilon_d),\; \varphi(f_1, 0) = (f'_1, 0),\; \ldots,\; \varphi(f_k, 0) = (f'_k, 0),
    \]
    for some $g'_1, \ldots, g'_d, f'_1, \ldots, f'_k \in (G^C)^{(\Z^n)}$.
    Let $\tilde G$ denote the finite group $G^C$.
    \smallskip

    \textbf{Step 3.} Finally, let $(f', 0) \coloneqq \varphi(f, 0) \in \tilde G \wr \Z^n$. Let $M \in \N$ be a number such that
    \[
    \supp(g'_i), \supp(f'_j), \supp(f') \subseteq \Z^d \times [-M, M]^{n-d}
    \]
    for all $i = 1, \ldots, d; j = 1, \ldots, k$.
    Then $\varphi(H) = \gen{(g'_1, \varepsilon_1), \ldots, (g'_d, \varepsilon_d), (f'_1, 0), \ldots, (f'_k, 0)}_{\gp} \leq \tilde G \wr \Z^n$ is contained in the subgroup
    \[
    \big(\tilde{G}^{[-M, M]^{n-d}}\big) \wr \Z^d \leq \tilde{G} \wr \Z^n.
    \]
    Let $\widehat G \coloneqq \tilde{G}^{[-M, M]^{n-d}}$, we have thus reduced the problem to deciding whether $(f', 0) \in \widehat{G} \wr \Z^d$ is in the subgroup of $\widehat{G} \wr \Z^d$ generated by $(g'_1, \varepsilon_1), \ldots, (g'_d, \varepsilon_d), (f'_1, 0), \ldots, (f'_k, 0)$.
\end{proof}

Thanks to Lemma~\ref{lem:subgroup_special}, by replacing $\widehat G$ by $G$ and $d$ by $n$, from now on we can consider only the special case of Subgroup Membership where we decide whether $(f, 0) \in G \wr \Z^n$ is in the subgroup 
\[
H = \gen{(g_1, \varepsilon_1), \ldots, (g_n, \varepsilon_n), (f_1, 0), \ldots, (f_k, 0)}_{\gp} \leq G \wr \Z^n.
\]
In what follows, we will often identify a tile $f \in \GZn$ with the group element $(f, 0) \in G \wr \Z^n$.
To decide whether $f = (f, 0) \in G \wr \Z^n$ is in $H$, we need to characterize the elements in $H \cap \GZn$.
Let $[g, h]$ denote the commutator 
\[
[g, h] \coloneqq ghg^{-1}h^{-1}.
\]
The first observation is that the following sets of commutators are in $H \cap \GZn$:
\begin{align*}
C_{++} & \coloneqq \big\{[g_i, \varepsilon_i), (g_j, \varepsilon_j)] \;\big|\; 1 \leq i, j \leq n \big\}, \quad && C_{+-} \coloneqq \big\{[g_i, \varepsilon_i), (g_j, \varepsilon_j)^{-1}] \;\big|\; 1 \leq i, j \leq n \big\}, \\
C_{-+} & \coloneqq \big\{[g_i, \varepsilon_i)^{-1}, (g_j, \varepsilon_j)] \;\big|\; 1 \leq i, j \leq n \big\}, \quad && C_{--} \coloneqq \big\{[g_i, \varepsilon_i)^{-1}, (g_j, \varepsilon_j)^{-1}] \;\big|\; 1 \leq i, j \leq n \big\}.
\end{align*}
%These can be considered as tiles in $\GZn$.
For $f, t \in G \wr \Z^n$, let $\li{f}t$ denote the left conjugation 
\[
\li{f}t \coloneqq ftf^{-1}. 
\]
Let $e^{\Z^n} \in \GZn$ denote the trivial tile, then for $t \in \GZn$ and $\alpha \in \Z^n$, we have
\[
\li{(e^{\Z^n}, \alpha)}t = \li{\alpha}t \in \GZn.
\]

We now provide a characterization of elements in $H \cap \GZn$.

\begin{proposition}[{Variant of Reidemeister-Schreier, compare~\cite{tomaszewski2003basis, Putman2022}}]\label{prop:standard_form}
    Let $f \in \GZn$.
    Then $f \in H \cap \GZn$ if and only if $f$ can be written as
    \begin{equation}\label{eq:standard_form}
    \li{(g_1, \varepsilon_1)^{z_{11}} \cdots (g_n, \varepsilon_n)^{z_{1n}}}t_1 \cdot \li{(g_1, \varepsilon_1)^{z_{21}} \cdots (g_n, \varepsilon_n)^{z_{2n}}}t_2 \cdot \cdots \cdot \li{(g_1, \varepsilon_1)^{z_{m1}} \cdots (g_n, \varepsilon_n)^{z_{mn}}}t_m,
    \end{equation}
    where $m \in \N$, $t_i \in \{f_1, \ldots, f_k\}^{\pm} \cup C_{++} \cup C_{+-} \cup C_{-+} \cup C_{--}$, and $z_{ij} \in \Z$ for $i = 1, \ldots, m; j = 1, \ldots, n$.
\end{proposition}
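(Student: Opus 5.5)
The easy direction comes first. Each $t_i$ lies in $H \cap \GZn$: the $f_j^{\pm}$ are generators, and each commutator of elements of $H$ lies in $H$ with trivial $\Z^n$-component. The element $(g_1,\varepsilon_1)^{z_{i1}} \cdots (g_n,\varepsilon_n)^{z_{in}}$ lies in $H$. The subgroup $\GZn$ is normal in $G \wr \Z^n$, being the kernel of $\pi$. Hence every left conjugate in \eqref{eq:standard_form} lies in $H \cap \GZn$, and so does the product.

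For the converse I would run a Reidemeister--Schreier style rewriting. Write $x_j \coloneqq (g_j,\varepsilon_j)$. Consider the set $T$ of normal words $x_1^{z_1}\cdots x_n^{z_n}$, $z \in \Z^n$. Since $\pi(H) = \Z^n$ and $\pi(x_1^{z_1}\cdots x_n^{z_n}) = z$, the set $T$ is a transversal of $H \cap \GZn$ in $H$. Let $N \subseteq H \cap \GZn$ be the set of all products of the form \eqref{eq:standard_form}. It is a subgroup: it is closed under products, and under inverses because the allowed set of $t_i$ is closed under inversion. Indeed $\{f_1,\ldots,f_k\}^{\pm}$ is symmetric, and $[a,b]^{-1} = [b,a]$ maps $C_{++} \to C_{++}$, $C_{+-} \to C_{-+}$, and so on. The aim is to show $H \cap \GZn \subseteq N$.

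The main step is to show that $N$ is invariant under left conjugation by $x_j^{\pm 1}$. Then $N$ is normalized by all of $H$, since $H$ is generated by the $x_j$ and by the $f_i \in N$. Granting this, take $h \in H \cap \GZn$ and write it as a word in $x_j^{\pm1}$ and $f_i^{\pm1}$. Each $f_i^{\pm1}$ occurrence can be moved to the front as a left conjugate $\li{w}f_i^{\pm1}$, where $w$ is the prefix. This expresses $h$ as a product of elements of $N$, times the word $w$ in the letters $x_j^{\pm 1}$ alone, which still lies in $\GZn$.

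It remains to show that a word $w$ in $x_j^{\pm1}$ with $\pi(w) = 0$ lies in $N$. I would sort $w$ into normal form using the identity $ab = [a,b]\,ba$, where $[a,b] \in C_{\pm\pm}$. Each swap of adjacent letters $x_i^{\pm1}$ and $x_j^{\pm1}$ that are out of order, with $i > j$, produces a commutator, left-conjugated by the prefix. Free cancellation of $x_jx_j^{-1}$ costs nothing. The sorted word has exponent sum zero in each letter, so it reduces to the identity. Each commutator term is then $\li{u}c$ with $u$ an arbitrary word in the $x_j^{\pm1}$, and these terms lie in $N$ by the conjugation invariance.

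So everything reduces to conjugation invariance, and I expect this to be the main obstacle. I need $\li{x_j^{\pm1}}\bigl(\li{x_1^{z_1}\cdots x_n^{z_n}}t\bigr) \in N$, that is, $x_j^{\pm1}x_1^{z_1}\cdots x_n^{z_n}$ has to be rewritten as a normal word times correction factors. I would write $x_j^{\pm1} x_1^{z_1}\cdots x_n^{z_n} = c \cdot x_1^{z_1}\cdots x_j^{z_j\pm1}\cdots x_n^{z_n}$, where $c$ is obtained by commuting $x_j^{\pm1}$ past each letter of $x_1^{z_1}\cdots x_{j-1}^{z_{j-1}}$. Then $c$ is a product of terms $\li{v}[\,x_j^{\pm1}, x_i^{\pm1}]$ with $v$ a prefix, which is again a word in the $x$'s. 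This is circular, so I would argue by induction on the length of the conjugating word. A cleaner route is to use that $\GZn$ is abelian modulo nothing useful here, so I would instead do induction on $\sum_{i<j}|z_i|$: the conjugators $v$ appearing have the form $x_j^{\pm1}x_1^{z_1}\cdots x_i^{w}$ with fewer letters preceding index $j$. Getting this well-founded measure right, so that conjugating a commutator $\li{v}c$ by $x_j^{\pm1}$ again yields normal-form conjugates, is where I expect the technical care to go.
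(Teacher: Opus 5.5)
Your framework is sound apart from one step. $N$ is a subgroup, since the allowed $t$'s are closed under inversion via $[a,b]^{-1}=[b,a]$. Invariance of $N$ under $\li{x_j^{\pm1}}{(\cdot)}$ would make $N$ normal in $H$. Granting that, your collection of the $f_i^{\pm1}$ to the front and your bubble-sort treatment of a word $w$ in the $x_j^{\pm1}$ with $\pi(w)=0$ are both correct.

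The gap is that this invariance, which is the only nontrivial content of the converse direction, is never established. You declare the direct computation circular and replace it by an induction on $\sum_{i<j}|z_i|$. But the inductive statement and the decreasing quantity are left open, and the conjugators $x_j^{\pm1}x_1^{z_1}\cdots x_i^{w}$ you anticipate are not normal words. So nothing you wrote shows that the correction terms land in $N$.

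The missing observation is that the computation you started is not circular. Push $x_j^{\epsilon}$ rightward through $u=x_1^{z_1}\cdots x_n^{z_n}$ one letter at a time. When $x_i^{\sigma}$ ($i<j$, $\sigma=\pm1$) is the next letter of $u$, use $P\,x_j^{\epsilon}x_i^{\sigma}\,Q=\li{P}{[x_j^{\epsilon},x_i^{\sigma}]}\cdot P\,x_i^{\sigma}x_j^{\epsilon}\,Q$. The word $P$ to the left of $x_j^{\epsilon}$ is always a prefix $x_1^{z_1}\cdots x_{i-1}^{z_{i-1}}x_i^{y}$ of $u$, hence itself a normal word. So every correction term is already a factor of the shape \eqref{eq:standard_form}. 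This gives $x_j^{\epsilon}u=c\,u'$ with $c\in N$ and $u'=x_1^{z_1}\cdots x_j^{z_j+\epsilon}\cdots x_n^{z_n}$. Hence $\li{x_j^{\epsilon}}{\bigl(\li{u}{t}\bigr)}=c\cdot\li{u'}{t}\cdot c^{-1}\in N$ with no further conjugation needed, and conjugators of the form $x_j^{\pm1}x_1^{z_1}\cdots$ never arise.

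This is the paper's Step~2 mirrored. There a letter is appended on the right of a normal word and pushed leftward past the higher-index blocks, again with every intervening prefix normal. That shows directly that any word $p$ in the $x_j^{\pm1}$ equals a product of terms $\li{w}{c}$ ($w$ normal, $c\in C_{++}\cup C_{+-}\cup C_{-+}\cup C_{--}$) times the normal word with exponent vector $\pi(p)$. Substituting this into $f=\li{p_1}{t_1}\cdots\li{p_m}{t_m}\cdot p_0$ finishes the proof without ever needing normality of $N$. With the fix above, your normal-closure packaging is a valid alternative resting on the same one-letter commutation step.
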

\begin{proof}
    The proof is an application of the Reidemeister-Schreier method \cite[Chapter~II.4]{LyndonSchupp1977}.
    See Figure~\ref{fig:canonical} for an illustration of a simple example.
    See also~\cite{tomaszewski2003basis, Putman2022} for similar proofs in the context of the commutator subgroup of free groups.

    \begin{figure}[h!]
    \centering
    \includegraphics[width=0.7\textwidth,height=0.7\textheight,keepaspectratio, trim={2.1cm 1.1cm 1.6cm 0cm},clip]{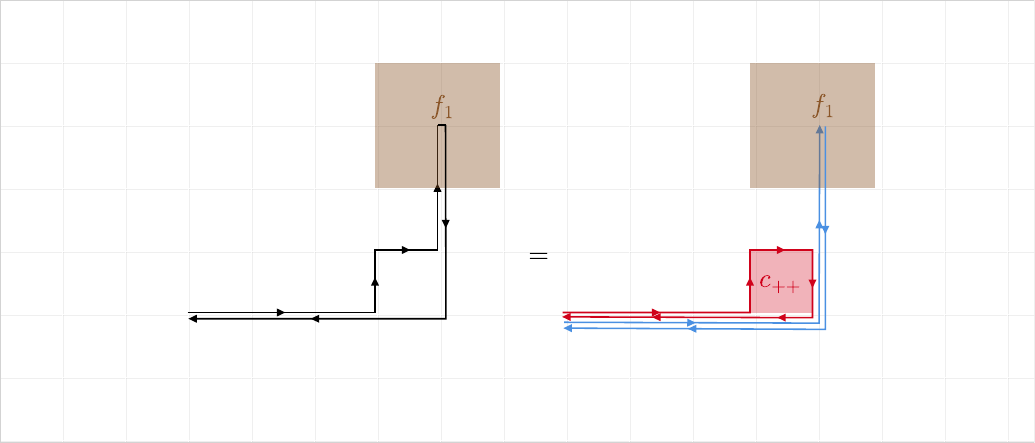}
    \caption{Example for Proposition~\ref{prop:standard_form}, writing an element of $H \cap \GZn$ in the form~\eqref{eq:standard_form}.}
    \label{fig:canonical}
    \end{figure}

    Recall that $H = \gen{(g_1, \varepsilon_1), \ldots, (g_n, \varepsilon_n), (f_1, 0), \ldots, (f_k, 0)}_{\gp}$.
    Since $C_{++}, C_{+-}, C_{-+}, C_{--} \subseteq H \cap \GZn$, any element of the form~\eqref{eq:standard_form} is in $H \cap \GZn$.
    For the opposite implication, let $f \in H \cap \GZn$ and we show it can be written in the form~\eqref{eq:standard_form}.
    We proceed in two steps.
    \smallskip

    \textbf{Step 1.} First we show that $f$ can be written as
    \begin{equation*}
    \li{p_1}t_1 \cdot \li{p_2}t_2 \cdot \cdots \cdot \li{p_m}t_m \cdot p_0,
    \end{equation*}
    where $m \in \N$, $p_i \in \gen{(g_1, \varepsilon_1), \ldots, (g_n, \varepsilon_n)}_{\gp}$ and $t_i \in \{f_1, \ldots, f_k\}^{\pm}$ for all $0 \leq i \leq m$.
    Additionally we have $p_0 \in \GZn$.
    Intuitively, $p_i$ can be considered as a \emph{path} in the $n$-dimensional grid composed of steps $(g_i, \varepsilon_i)^{\pm}$, and the conjugation $\li{p_i}t_i$ corresponds to traversing the path $p_i$, applying the tile $t_i$ at the end of the path, then coming back to the origin by inverting the path.
    Additionally, the path $p_0$ is a cycle.
    See Figure~\ref{fig:cycle} for an illustration.

    \begin{figure}[b]
    \centering
    \includegraphics[width=0.7\textwidth,height=0.7\textheight,keepaspectratio, trim={3.17cm 1.64cm 3.21cm 1.59cm},clip]{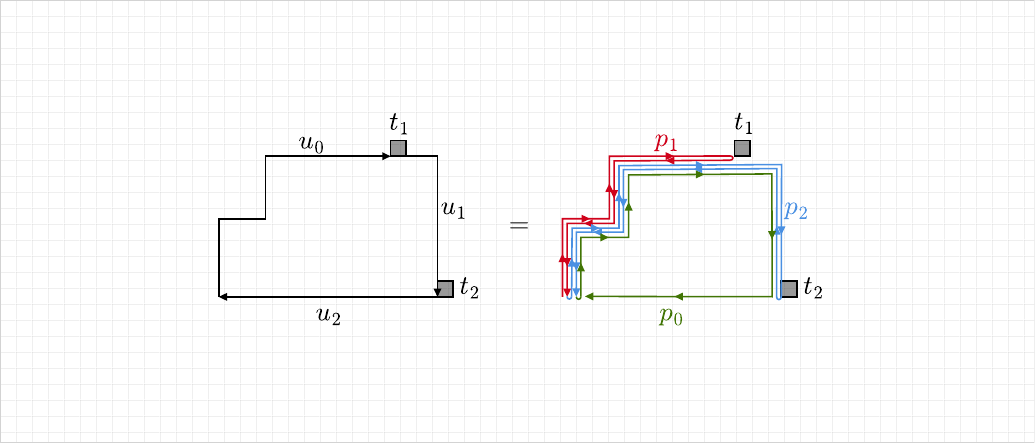}
    \caption{Illustration for Step 1, writing $f = u_0 t_1 u_1 t_2 u_2$, as $\li{p_1}t_1 \cdot \li{p_2}t_2 \cdot p_0$.}
    \label{fig:cycle}
    \end{figure}

    Indeed, since $f$ is in the group $H$ generated by the elements $(g_1, \varepsilon_1), \ldots, (g_n, \varepsilon_n), f_1, \ldots, f_k$, we can write $f$ as a product
    \[
    f = u_0 t_1 u_1 t_2 u_2 \cdots t_m u_m,
    \]
    where $u_i \in \gen{(g_1, \varepsilon_1), \ldots, (g_n, \varepsilon_n)}_{\gp}$ and $t_i \in \{f_1, \ldots, f_k\}^{\pm}$.
    Moving all the factors $u_i$ to the right yields
    \[
    f = \li{u_0}t_1 \cdot \li{u_0 u_1} t_2 \cdot \cdots \cdot \li{u_0 u_1 \cdots u_{m-1}} t_m \cdot u_0 u_1 \cdots u_m.
    \]
    We obtain
    \[
    f = \li{p_1}t_1 \cdot \li{p_2}t_2 \cdot \cdots \cdot \li{p_m}t_m \cdot p_0
    \]
    by taking the elements
    \[
    p_1 = u_0, \; p_2 = u_0 u_1,\; \ldots,\; p_m = u_0 u_1 \cdots u_{m-1},\; \text{ and }\; p_0 = u_0 u_1 \cdots u_m.
    \]
    Then $p_i \in \gen{(g_1, \varepsilon_1), \ldots, (g_n, \varepsilon_n)}_{\gp}$ for all $i$.
    Furthermore, we have $p_0 \in \GZn$ because $f \in \GZn$ and $\li{p_i} t_i \in \GZn$ for $1 \leq i \leq m$.

    \smallskip

    \textbf{Step 2.} For every $p \in \gen{(g_1, \varepsilon_1), \ldots, (g_n, \varepsilon_n)}_{\gp}$, we show that it can be written as
    \begin{equation}\label{eq:p_standard}
    \li{(g_1, \varepsilon_1)^{z_{11}} \cdots (g_n, \varepsilon_n)^{z_{1n}}}c_1 \cdot \li{(g_1, \varepsilon_1)^{z_{21}} \cdots (g_n, \varepsilon_n)^{z_{2n}}}c_2 \cdot \cdots \cdot \li{(g_1, \varepsilon_1)^{z_{s1}} \cdots (g_n, \varepsilon_n)^{z_{sn}}}c_s \cdot (g_1, \varepsilon_1)^{z_{01}} \cdots (g_n, \varepsilon_n)^{z_{0n}},
    \end{equation}
    where $s \in \N$, $c_i \in C_{++} \cup C_{+-} \cup C_{-+} \cup C_{--}$, and $z_{ij} \in \Z$ for all $i, j$.
    See Figure~\ref{fig:decompose} for an illustration.

    \begin{figure}[b]
    \centering
    \includegraphics[width=0.7\textwidth,height=0.7\textheight,keepaspectratio, trim={3.15cm 1.1cm 2.65cm 2.1cm},clip]{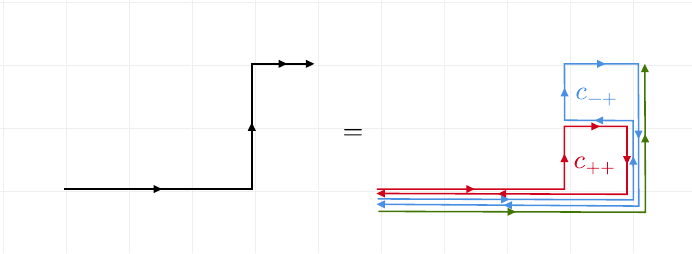}
    \caption{Illustration for Step 2, writing $(g_1, \varepsilon_1)^3 (g_2, \varepsilon_2)^2 (g_1, \varepsilon_1)$ as $\li{(g_1, \varepsilon_1)^3}{}{c_{++}} \cdot \li{(g_1, \varepsilon_1)^4 (g_2, \varepsilon_2)}{}{c_{-+}} \cdot (g_1, \varepsilon_1)^4 (g_2, \varepsilon_2)^2$}
    \label{fig:decompose}
    \end{figure}

    Take any $p \in \gen{(g_1, \varepsilon_1), \ldots, (g_n, \varepsilon_n)}_{\gp}$, we can write it as a product
    \[
    p = h_1 h_2 \cdots h_m,
    \]
    where $m \in \N$ and $h_i \in \{(g_1, \varepsilon_1), \ldots, (g_n, \varepsilon_n)\}^{\pm}$.
    We use induction on the length $m$ to show that $p$ can be written in the form~\eqref{eq:p_standard}.

    The base case $m=0$ is trivial.
    For the induction step, suppose we have written $h_1 \cdots h_{m-1}$ in the form~\eqref{eq:p_standard}:
    \[
    h_1 \cdots h_{m-1} = \li{(g_1, \varepsilon_1)^{z_{11}} \cdots (g_n, \varepsilon_n)^{z_{1n}}}c_1 \cdot \cdots \cdot \li{(g_1, \varepsilon_1)^{z_{s1}} \cdots (g_n, \varepsilon_n)^{z_{sn}}}c_s \cdot (g_1, \varepsilon_1)^{z_{01}} \cdots (g_n, \varepsilon_n)^{z_{0n}}.
    \]
    If $h_m = (g_n, \varepsilon_n)$ or $(g_n, \varepsilon_n)^{-1}$ then we simply increase or decrease $z_{0n}$ and obtain the same form for $h_1 \cdots h_{m-1} h_m$.
    Now suppose $h_m = (g_i, \varepsilon_i)$ with $1 \leq i \leq n-1$.
    The case $h_m = (g_i, \varepsilon_i)^{-1}$ is analogous.
    It suffices to show that the final part of the product $h_1 \cdots h_{m-1} h_m$, which is
    \[
    (g_1, \varepsilon_1)^{z_{01}} \cdots (g_n, \varepsilon_n)^{z_{0n}} (g_i, \varepsilon_i),
    \]
    can be written in the required form~\eqref{eq:p_standard}. 
    For brevity we now denote $a_i \coloneqq (g_i, \varepsilon_i)^{\sgn(z_{i0})}$ where $\sgn(z_{i0}) \in \{1, -1\}$ denotes the sign of $z_{i0}$, and let $x_{i0} \coloneqq |z_{i0}|$.
    So
    \[
    (g_1, \varepsilon_1)^{z_{01}} \cdots (g_n, \varepsilon_n)^{z_{0n}} (g_i, \varepsilon_i) = a_1^{x_{01}} \cdots a_n^{x_{0n}} a_i.
    \]
    Applying 
    \[
    a_j a_i = a_i a_j  [a_j^{-1}, a_i^{-1}]
    \]
    repeatedly, we move the final factor $a_i$ in $a_1^{x_{01}} \cdots a_n^{x_{0n}} a_i$ to the left until it reaches the position after
    $a_i^{x_{0i}}$, that is,
    \begin{align*}
        & a_1^{x_{01}} \cdots a_n^{x_{0n}} a_i \\
        =\; & a_1^{x_{01}} \cdots a_n^{x_{0n}-1} (a_n a_i) \\
        =\; & a_1^{x_{01}} \cdots a_n^{x_{0n}-1} a_i a_n [a_n^{-1}, a_i^{-1}] \\
        =\; & a_1^{x_{01}} \cdots a_n^{x_{0n}-2} (a_n a_i) a_n [a_n^{-1}, a_i^{-1}] \\
        =\; & a_1^{x_{01}} \cdots a_n^{x_{0n}-2} a_i a_n [a_n^{-1}, a_i^{-1}] a_n [a_n^{-1}, a_i^{-1}] \\
        & \vdots \\
        =\; & a_1^{x_{01}} \cdots a_i^{x_{0i}} \cdot a_i a_{i+1} [a_{i+1}^{-1}, a_i^{-1}] a_{i+1} [a_{i+1}^{-1}, a_i^{-1}] \cdots a_n [a_n^{-1}, a_i^{-1}] a_n [a_n^{-1}, a_i^{-1}].
    \end{align*}
    Then moving all the factors not in brackets to the right, the above expression becomes
    \[
    \li{a_1^{x_{01}} \cdots a_i^{x_{0i}+1} a_{i+1}}[a_{i+1}^{-1}, a_i^{-1}] \cdots \li{a_1^{x_{01}} \cdots a_i^{x_{0i}+1} \cdots a_n^{x_{0n}}}[a_n^{-1}, a_i^{-1}] \cdot \big(a_1^{x_{01}} \cdots a_i^{x_{0i}+1} \cdots a_n^{x_{0n}}\big),
    \]
    which is of the form
    \begin{equation*}
    \li{(g_1, \varepsilon_1)^{z_{11}} \cdots (g_n, \varepsilon_n)^{z_{1n}}}c_1 \cdot \li{(g_1, \varepsilon_1)^{z_{21}} \cdots (g_n, \varepsilon_n)^{z_{2n}}}c_2 \cdot \cdots \cdot \li{(g_1, \varepsilon_1)^{z_{s1}} \cdots (g_n, \varepsilon_n)^{z_{sn}}}c_s \cdot (g_1, \varepsilon_1)^{z_{01}} \cdots (g_n, \varepsilon_n)^{z_{0n}}
    \end{equation*}
    with $c_j \in C_{++} \cup C_{+-} \cup C_{-+} \cup C_{--}$.
    This finishes the induction on $m$ and completes Step 2. 

    \smallskip

    \textbf{Combining the two steps}, we first write
    \[
        f = \li{p_1}t_1 \cdot \li{p_2}t_2 \cdot \cdots \cdot \li{p_m}t_m \cdot p_0.
    \]
    Then, in each $\li{p_i}t_i, 1 \leq i \leq m$, we replace $p_i$ with the expression~\eqref{eq:p_standard}, and obtain
    \begin{align*}
    \li{p_i}t_i = p_i t_i p_i^{-1}
    = & \li{(g_1, \varepsilon_1)^{z_{11}} \cdots (g_n, \varepsilon_n)^{z_{1n}}}c_1 \cdots \li{(g_1, \varepsilon_1)^{z_{s1}} \cdots (g_n, \varepsilon_n)^{z_{sn}}}c_s \cdot (g_1, \varepsilon_1)^{z_{01}} \cdots (g_n, \varepsilon_n)^{z_{0n}} \cdot t_i \\
    & \quad \cdot \big((g_1, \varepsilon_1)^{z_{01}} \cdots (g_n, \varepsilon_n)^{z_{0n}}\big)^{-1} \cdot \li{(g_1, \varepsilon_1)^{z_{s1}} \cdots (g_n, \varepsilon_n)^{z_{sn}}}c_s^{-1} \cdots \li{(g_1, \varepsilon_1)^{z_{11}} \cdots (g_n, \varepsilon_n)^{z_{1n}}}c_1^{-1} \\
    = & \li{(g_1, \varepsilon_1)^{z_{11}} \cdots (g_n, \varepsilon_n)^{z_{1n}}}c_1 \cdots \li{(g_1, \varepsilon_1)^{z_{s1}} \cdots (g_n, \varepsilon_n)^{z_{sn}}}c_s \cdot \li{(g_1, \varepsilon_1)^{z_{01}} \cdots (g_n, \varepsilon_n)^{z_{0n}}} t_i \\
    & \quad \quad \cdot \li{(g_1, \varepsilon_1)^{z_{s1}} \cdots (g_n, \varepsilon_n)^{z_{sn}}}c_s^{-1} \cdots \li{(g_1, \varepsilon_1)^{z_{11}} \cdots (g_n, \varepsilon_n)^{z_{1n}}}c_1^{-1}.
    \end{align*}
    This is a product of the form~\eqref{eq:standard_form} in the statement of the proposition.
    Then, we replace the term $p_0$ with the expression~\eqref{eq:p_standard}, with the additional observation that $z_{01} = \cdots = z_{0n} = 0$ since $p_0 \in \GZn$.
    Together, this expresses $f$ in the required form~\eqref{eq:standard_form} and proves the proposition.
\end{proof}

We now reduce Subgroup Membership in $G \wr \Z^n$ to nested subgroup membership in $\GZn$ (see Section~\ref{subsec:nested}).

\begin{proposition}\label{prop:group_to_nested}
    Subgroup Membership in $G \wr \Z^n$ reduces to nested subgroup membership in $\GZn$.
\end{proposition}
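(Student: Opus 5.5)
The plan is to combine Lemma~\ref{lem:subgroup_special} with Proposition~\ref{prop:standard_form}. By Lemma~\ref{lem:subgroup_special} it suffices to decide whether $f \in H = \gen{(g_1,\varepsilon_1),\ldots,(g_n,\varepsilon_n),(f_1,0),\ldots,(f_k,0)}_{\gp}$. By Proposition~\ref{prop:standard_form}, $H \cap \GZn$ is the subgroup of $\GZn$ generated by all left conjugates $\li{p}t$. Here $t$ ranges over the finite set $T \coloneqq \{f_1,\ldots,f_k\}^{\pm} \cup C_{++}\cup C_{+-}\cup C_{-+}\cup C_{--}$, and $p = a_1^{z_1}\cdots a_n^{z_n}$ with $a_i = (g_i,\varepsilon_i)$ and $z \in \Z^n$. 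So the goal is to compute finite sets $B_0, B_{m,\pm}$ such that this generating set generates the same subgroup as the nested subgroup $\gen{B_0 \mid B_{1,+},B_{1,-}\mid\cdots\mid B_{n,+},B_{n,-}}$. Since the roles of the coordinates can be permuted, I would first fix the ordering of the factors of $p$ to match the nesting. The coordinate of the outermost conjugation should carry the ``free'' $\Z$-translations of the deeper levels. If necessary, I would rerun Step~2 of Proposition~\ref{prop:standard_form} with the factors in reversed order; the argument is symmetric.

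The basic computation is one-dimensional. Write $a^z = (R_z, z\varepsilon)$ for $a=(g,\varepsilon)$. Then $\li{a^z}t = R_z \cdot \li{z\varepsilon}t \cdot R_z^{-1}$. For $z \ge 0$, $R_z = g\cdot \li{\varepsilon}g \cdots \li{(z-1)\varepsilon}g$, and translating back gives $\li{a^z}t = \li{z\varepsilon}\big(t^{(R'_z)^{-1}}\big)$ with $R'_z = \li{-z\varepsilon}R_z$. Only the factors of $R'_z$ whose supports meet the finite set $\supp(t)$ act nontrivially by conjugation. These factors are ordered so that the irrelevant ones sit at one end and can be deleted, as in Step~1 of Theorem~\ref{thm:criterion_tile}. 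Hence there is a computable $D$ such that for $z \ge D$ the tile $t' \coloneqq t^{(R'_z)^{-1}}$ no longer depends on $z$. The family $\{\li{a^z}t \mid z\ge D\}$ is then exactly $\{\li{z\varepsilon}t'\mid z\ge D\} = \{\li{z\varepsilon}(\li{D\varepsilon}t') \mid z \in \N\}$, which is a ``$+$'' family. Symmetrically, $z\le -D$ gives a ``$-$'' family, and the finitely many $|z|<D$ give finitely many explicit tiles.

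I would then iterate this coordinate by coordinate, proceeding by induction on the number of conjugation layers. Level $m$ collects the tiles conjugated by a power of $a_m$ with $|z_m|\ge D$, and these are to be placed in $B_{m,\pm}$. The earlier layers $a_1^{z_1}\cdots a_{m-1}^{z_{m-1}}$ should then act as arbitrary $\Z^{m-1}$-translations, which matches the definition of nested subgroups. Here conjugation being an automorphism helps: $\li{a}(xy)=\li{a}x\,\li{a}y$, so one may push each layer through products. The tiles with $|z_m|<D$ fall into the lower levels $B_0, B_{j,\pm}$ for $j<m$, after being enlarged by the finitely many bounded conjugates.

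The main obstacle is exactly this compatibility. The conjugating tile $R_{z_j}$ for an outer coordinate $j$ has bounded extent in the other coordinates. Its effect on an inner tile that has already been translated far away in those coordinates is therefore not simply a translation, and a naive finite set $B_{m,\pm}$ might not suffice. What I would try first is a ``finite-state'' pigeonhole argument. The conjugation action on a tile of bounded shape depends only on the pattern of $R$ in a window of bounded size around the support. Since $G$ is finite, only finitely many window patterns occur, and they become eventually constant (or periodic) along each ray. Choosing $D$ large, uniformly over all layers, should then make all conjugates of a given shape fall into finitely many translation classes, computable by closure as in the first claim of Theorem~\ref{thm:compute_tile_basis}. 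If periodicity rather than constancy occurs, I would first pass to a finite-index sublattice, absorbing the cosets into a larger finite group as in Step~2 of Lemma~\ref{lem:subgroup_special}. After that, membership of $f$ is decided by Theorem~\ref{thm:nested_tile}.
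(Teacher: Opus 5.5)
Your skeleton matches the paper's. You reduce via Lemma~\ref{lem:subgroup_special} and Proposition~\ref{prop:standard_form}, sort each conjugate $\li{a_1^{z_1}\cdots a_n^{z_n}}t$ by the largest index $m$ with $|z_m|\ge D$, and absorb the bounded inner exponents into finitely many tiles. Your one-dimensional stabilization $\li{a_m^{z_m}}s=\li{z_m\varepsilon_m}{s'}$, with $s'$ independent of $z_m\ge D$, is correct. The gap is the step that actually produces a nested subgroup: the outer layers $a_1^{z_1}\cdots a_{m-1}^{z_{m-1}}$ must act on the level-$m$ tile as the plain translation by $(z_1,\ldots,z_{m-1},0,\ldots,0)$. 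You state this as the desired outcome, then argue that it fails. In its place you offer a pigeonhole sketch: window patterns that ``should'' become eventually constant or periodic, plus an unexplained passage to a finite-index sublattice. None of this is argued, so as written the proposal does not show that any finite sets $B_0,B_{m,\pm}$ suffice.

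Your reason for the obstacle is also backwards. The outer conjugators have bounded extent in coordinate $m$, and the inner tile sits far out in coordinate $m$; together these make the action a pure translation. Write $a_1^{z_1}\cdots a_{m-1}^{z_{m-1}}=(u,\beta)$ with $\beta=(z_1,\ldots,z_{m-1},0,\ldots,0)$. Every factor of $u$ is a translate of some $g_j$, $j<m$, by a vector whose $m$-th coordinate is $0$. So every point of $\supp(u)$ has $m$-th coordinate in $[-\max_j\|g_j\|,\max_j\|g_j\|]$. On the other hand, $s=\li{a_{m+1}^{z_{m+1}}\cdots a_n^{z_n}}t$ has support $(0,\ldots,0,z_{m+1},\ldots,z_n)+\supp(t)$. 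Hence $\li{\beta+z_m\varepsilon_m}{s'}$ has all $m$-th coordinates in $[z_m-\|t\|,z_m+\|t\|]$. For $z_m\ge D>\max_{t\in T}\|t\|+\max_j\|g_j\|$ the two supports are disjoint, so $u$ conjugates trivially. This gives $\li{a_1^{z_1}\cdots a_n^{z_n}}t=\li{(z_1,\ldots,z_{m-1},z_m-D,0,\ldots,0)}{b}$ with $b=\li{a_m^{D}a_{m+1}^{z_{m+1}}\cdots a_n^{z_n}}t$, and the case $z_m\le -D$ is symmetric. You then get directly the finite sets $B_{m,\pm}=\{\li{a_m^{\pm D}a_{m+1}^{z_{m+1}}\cdots a_n^{z_n}}t\mid |z_j|<D,\ t\in T\}$ and the analogous $B_0$, with a single threshold and no periodicity or sublattice argument. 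This is the paper's Claim. The paper splits $a_m^{z_m}=a_m^{z_m-d}a_m^{d}$ and checks that the tile part of the whole prefix $a_1^{z_1}\cdots a_{m-1}^{z_{m-1}}a_m^{z_m-d}$ is disjoint from the conjugated tile. Keep the inequality strict: with the paper's $d=\max_t\|t\|+\max_j\|g_j\|$ the supports can meet when $z_m=d$, so the threshold should be one larger.
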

\begin{proof}    
    By Lemma~\ref{lem:subgroup_special} we reduce to the case of determining whether $(f, 0) \in G \wr \Z^n$ is in the subgroup $H = \gen{(g_1, \varepsilon_1), \ldots, (g_n, \varepsilon_n), (f_1, 0), \ldots, (f_k, 0)}_{\gp}$.
    By Proposition~\ref{prop:standard_form}, elements of $H \cap \GZn$ are products of tiles of the form
    \begin{equation}\label{eq:single_tile}
    \li{(g_1, \varepsilon_1)^{z_1} \cdots (g_n, \varepsilon_n)^{z_n}}t,
    \end{equation}
    where $t \in T$ with
    \[
    T = \{f_1, \ldots, f_k\}^{\pm} \cup C_{++} \cup C_{+-} \cup C_{-+} \cup C_{--}.
    \]
    The overall idea of the proof is to show that the conjugated tiles $\li{(g_1, \varepsilon_1)^{z_1} \cdots (g_n, \varepsilon_n)^{z_n}}t,\; t \in T$, can have only finitely many patterns, depending on the sign and size of $z_1, \ldots, z_n$.
    See Figure~\ref{fig:reduce} for an illustration.

    \begin{figure}[h!]
        \centering
        \includegraphics[width=0.8\textwidth,height=0.8\textheight,keepaspectratio, trim={4.24cm 1.65cm 2.95cm 0.8cm},clip]{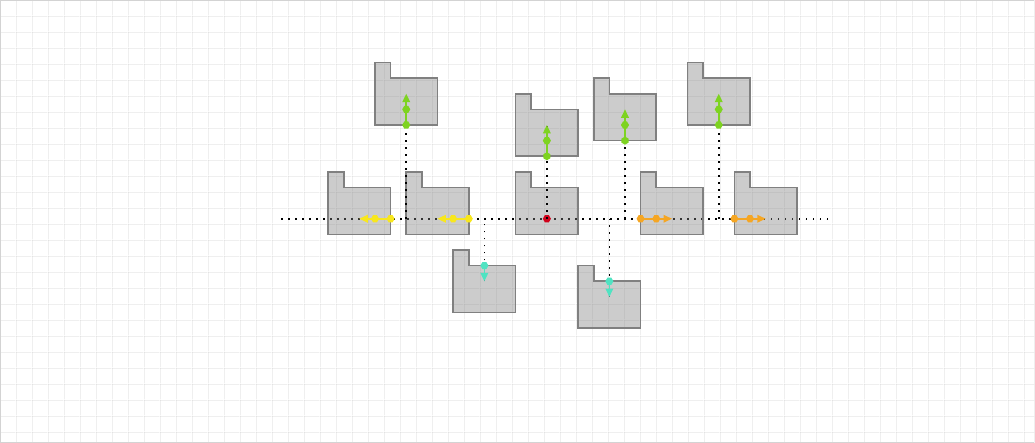}
        \caption{Different conjugations of the same tile, depending on its position (compare Figure~\ref{fig:nested}).}
        \label{fig:reduce}
    \end{figure}
    
    For a tile $b \in \GZn$, let $\|b\|$ denote the smallest number $y \in \N$ such that 
    \[
    \supp(b) \in [-y, y]^n.
    \]
    Let
    \begin{equation}\label{eq:equiv_def_d}
    d \coloneqq \max_{t \in T} \|t\| + \max_{1 \leq i \leq n} \|g_i\|.
    \end{equation}
    Define the following tile sets for $i = 1, \ldots, n$:
    \begin{align*}
    B_{i, +} & \coloneqq \left\{ \li{(g_i, \varepsilon_i)^d (g_{i+1}, \varepsilon_{i+1})^{z_{i+1}} \cdots (g_n, \varepsilon_n)^{z_n}}t \;\middle|\; |z_{i+1}| < d, \ldots, |z_n| < d,\;  t \in T \right\} \subset \GZn, \\
     B_{i, -} & \coloneqq \left\{ \li{(g_i, \varepsilon_i)^{-d} (g_{i+1}, \varepsilon_{i+1})^{z_{i+1}} \cdots (g_n, \varepsilon_n)^{z_n}}t \;\middle|\; |z_{i+1}| < d, \ldots, |z_n| < d,\;  t \in T \right\} \subset \GZn,
    \end{align*}
    and
    \[
    B_0 \coloneqq  \left\{ \li{(g_1, \varepsilon_1)^{z_1} \cdots (g_n, \varepsilon_n)^{z_n}}t \;\middle|\; |z_1| < d, \ldots, |z_n| < d,\;  t \in T \right\} \subset \GZn.
    \]
    We now show that $(f, 0) \in H \cap \GZn$ if and only if $f$ is in the nested subgroup
    \[
    \gen{B_0 \,\middle|\, B_{1, +}, B_{1, -} \,\middle|\, \cdots \,\middle|\, B_{n, +}, B_{n, -}} \leq \GZn.
    \]
    Fix a tile 
    \[
    \li{(g_1, \varepsilon_1)^{z_1} \cdots (g_n, \varepsilon_n)^{z_n}}t
    \]
    of the form~\eqref{eq:single_tile}.

    \smallskip
    \begin{adjustwidth}{3mm}{3mm}
    \textbf{Claim.} Let $i$ be the largest number with $0 \leq i \leq n$ such that $|z_{i+1}| < d, \ldots, |z_n| < d$.
    Then:
    \begin{enumerate}[nosep, label=(\roman*)]
        \item If $i \geq 1$ and $z_i \geq 0$, then $z_i \geq d$, and
        \[
        \li{(g_1, \varepsilon_1)^{z_1} \cdots (g_n, \varepsilon_n)^{z_n}}t = \li{(z_1, \ldots, z_i - d, 0, \ldots, 0)}{b}.
        \]
        where
        \[
        b \coloneqq \li{(g_i, \varepsilon_i)^{d} (g_{i+1}, \varepsilon_{i+1})^{z_{i+1}} \cdots (g_n, \varepsilon_n)^{z_n}}t \in B_{i, +}.
        \]
        \item If $i \geq 1$ and $z_i < 0$, then $z_i \leq -d$, and
        \[
        \li{(g_1, \varepsilon_1)^{z_1} \cdots (g_n, \varepsilon_n)^{z_n}}t = \li{(z_1, \ldots, z_i + d, 0, \ldots, 0)}{b}.
        \]
        where
        \[
        b \coloneqq \li{(g_i, \varepsilon_i)^{-d} (g_{i+1}, \varepsilon_{i+1})^{z_{i+1}} \cdots (g_n, \varepsilon_n)^{z_n}}t \in B_{i, -}.
        \]
        \item If $i = 0$, then $|z_1| < d, \ldots, |z_n| < d$, and
        \[
        \li{(g_1, \varepsilon_1)^{z_1} \cdots (g_n, \varepsilon_n)^{z_n}}t \in B_0.
        \]
    \end{enumerate}
    \smallskip
    
    \noindent$\blacktriangleright$
    It suffices to prove (i), since (ii) is analogous and (iii) is obvious.
    We can write each $(g, \varepsilon)$ as a product $g \varepsilon$ where $g \in \GZn \leq G \wr \Z^n$ and $\varepsilon = (e^{\Z^n}, \varepsilon) \in G \wr \Z^n$. 
    Then
    \[
    (g_1, \varepsilon_1)^{z_1} \cdots (g_i, \varepsilon_i)^{z_i - d} = (g_1 \varepsilon_1 \cdots g_1 \varepsilon_1) \cdots (g_i \varepsilon_i \cdots g_i \varepsilon_i) = g_1 \cdot \li{\varepsilon_1}g_1 \cdots \li{\varepsilon_1^{z_1} \cdots \varepsilon_i^{z_i - d - 1}}g_i \cdot \left(\varepsilon_1^{z_1} \cdots \varepsilon_i^{z_i - d}\right).
    \]
    (If $z_i = d$ then the term $\li{\varepsilon_1^{z_1} \cdots \varepsilon_i^{z_i - d - 1}}g_i$ should be replaced by $\li{\varepsilon_1^{z_1} \cdots \varepsilon_{i-1}^{z_{i-1} - 1}}g_i$).
    Consequently, letting
    \[
    b \coloneqq \li{(g_i, \varepsilon_i)^{d} (g_{i+1}, \varepsilon_{i+1})^{z_{i+1}} \cdots (g_n, \varepsilon_n)^{z_n}}t \in B_{i, +},
    \]
    we have
    \begin{align*}
        & \li{(g_1, \varepsilon_1)^{z_1} \cdots (g_n, \varepsilon_n)^{z_n}}t 
        = \li{(g_1, \varepsilon_1)^{z_1} \cdots (g_i, \varepsilon_i)^{z_i - d}}b \\
        =\; & g_1 \cdot \li{\varepsilon_1}g_1 \cdots \li{\varepsilon_1^{z_1} \cdots \varepsilon_i^{z_i - d - 1}}g_i \left(\varepsilon_1^{z_1} \cdots \varepsilon_i^{z_i - d}\right) b \left(\varepsilon_1^{z_1} \cdots \varepsilon_i^{z_i - d}\right)^{-1} \left(g_1 \cdot \li{\varepsilon_1}g_1 \cdots \li{\varepsilon_1^{z_1} \cdots \varepsilon_i^{z_i - d - 1}}g_i \right)^{-1} \\
        = & \li{u}{\left( \li{(z_1, \ldots, z_i - d, 0, \ldots, 0)}{b} \right)},
    \end{align*}
    where
    \[
    u \coloneqq g_1 \cdot \li{\varepsilon_1}g_1 \cdots \li{\varepsilon_1^{z_1} \cdots \varepsilon_i^{z_i - d - 1}}g_i \in \GZn.
    \]
    To prove the claim~(i) it suffices to show that $\li{u}{\left( \li{(z_1, \ldots, z_i - d, 0, \ldots, 0)}{b} \right)} = \li{(z_1, \ldots, z_i - d, 0, \ldots, 0)}{b}$.
    We do this by showing $\supp(u) \cap \supp\big(\li{(z_1, \ldots, z_i - d, 0, \ldots, 0)}b\big) = \emptyset$.
    
    Indeed,
    \begin{align*}
        & \supp\big(\li{(z_1, \ldots, z_i - d, 0, \ldots, 0)}b\big) \\
        =\;& (z_1, \ldots, z_i - d, 0, \ldots, 0) + \supp(b) \\
        =\;& (z_1, \ldots, z_i - d, 0, \ldots, 0) + \supp\Big(\li{(g_i, \varepsilon_i)^{d} (g_{i+1}, \varepsilon_{i+1})^{z_{i+1}} \cdots (g_n, \varepsilon_n)^{z_n}}t\Big) \\
        =\;& (z_1, \ldots, z_i - d, 0, \ldots, 0) + (0, \ldots, 0, d, z_{i+1} \ldots, z_n) + \supp(t) \\
        =\;& (z_1, \ldots, z_n) + \supp(t) \\
        \subseteq\;& \Z^{i-1} \times \big[z_i - \max_{t \in T} \|t\|,\, \infty \big) \times \Z^{n-i},
    \end{align*}
    while
    \begin{align*}
    \supp(u) & \subseteq \supp(g_1) \cup \supp\big(\li{\varepsilon_1}g_1\big) \cup \cdots \cup \supp \big(\li{\varepsilon_1^{z_1} \cdots \varepsilon_i^{z_i - d - 1}}g_i \big) \\
    & \subseteq \bigcup_{1 \leq j \leq i} \; \bigcup_{\substack{\alpha_1 \in \Z, \ldots, \alpha_{i-1} \in \Z, \\ 0 \leq \alpha_i \leq z_i - d - 1}} (\alpha_1, \ldots, \alpha_i, 0, \ldots, 0) + \supp(g_j) \\
    & \subseteq \Z^{i-1} \times \big(-\infty,\, z_i - d - 1 + \max_{1 \leq j \leq n} \|g_j\| \big] \times \Z^{n-i}.
    \end{align*}
    Therefore the choice of $d = \max_{t \in T} \|t\| + \max_{1 \leq i \leq n} \|g_i\|$ yields
    \[
    \supp(u) \cap \supp\big(\li{(z_1, \ldots, z_i - d, 0, \ldots, 0)}b\big) = \emptyset.
    \]
    Therefore $\li{u}{\left( \li{(z_1, \ldots, z_i - d, 0, \ldots, 0)}{b} \right)} = \li{(z_1, \ldots, z_i - d, 0, \ldots, 0)}{b}$.
    This combined with $\li{(g_1, \varepsilon_1)^{z_1} \cdots (g_n, \varepsilon_n)^{z_n}}t = \li{u}{\left( \li{(z_1, \ldots, z_i - d, 0, \ldots, 0)}{b} \right)}$ completes the proof of the claim.
    \hfill $\blacktriangleleft$
    \end{adjustwidth}
    \smallskip

    \noindent Coming back to the proof of the proposition, a tile is in $H \cap \GZn$ if and only if it is a product of tiles of the form
    \[
    \li{(g_1, \varepsilon_1)^{z_1} \cdots (g_n, \varepsilon_n)^{z_n}}t = \li{(z_1, \ldots, z_{i-1}, y_i, 0, \ldots, 0)}{b},
    \]
    where $z_1, \ldots, z_{i-1}, y_i, b$, fall in one of the following three cases of the claim:
    \begin{enumerate}[nosep, label=(\roman*)]
        \item $1 \leq i \leq n$, $z_1, \ldots, z_{i-1} \in \Z$, $y_i = z_i - d \in \N$, and $b \in B_{i, +}$,
        \item or, $1 \leq i \leq n$, $z_1, \ldots, z_{i-1} \in \Z$, $y_i = z_i + d \in -\N$, and $b \in B_{i, -}$,
        \item or, $i = 0$ and $b \in B_0$.
    \end{enumerate}
    \smallskip
    
    By the definition of nested subgroups (Section~\ref{subsec:nested}), products of such tiles are exactly elements of the nested subgroup
    $
    \gen{B_0 \,\middle|\, B_{1, +}, B_{1, -} \,\middle|\, \cdots \,\middle|\, B_{n, +}, B_{n, -}} \leq \GZn
    $.
    We conclude that $(f, 0) \in H \cap \GZn$ if and only if
    \[
    f \in \gen{B_0 \,\middle|\, B_{1, +}, B_{1, -} \,\middle|\, \cdots \,\middle|\, B_{n, +}, B_{n, -}}.
    \]
    Therefore, Subgroup Membership in $G \wr \Z^n$ reduces to nested subgroup membership in $\GZn$.
\end{proof}

From this we obtain the main result of this section:
\thmgroup*
\begin{proof}
    By Proposition~\ref{prop:group_to_nested}, Subgroup Membership in $G \wr \Z^n$ reduces to nested subgroup membership in $\GZn$.
    This is decidable by Theorem~\ref{thm:nested_tile}.
\end{proof}

\section{Conclusion and future work}

In this paper we defined a notion of standard basis for shift-stable subgroups of $\GNn$ for finite $G$, and used it to prove decidability of Subgroup Membership in $G \wr \Z^n$.
We discuss here some possible directions in which this work can be further developed:
\begin{enumerate}[nosep, label=(\arabic*)]
    \item One can work on replacing $G$ with infinite, non-abelian groups with suitable Noetherian properties.
    The most natural examples are \emph{virtually polycyclic} groups~\cite{baumslag1991algorithmic}, and possibly,  metabelian groups~\cite{baumslag1994algorithmic}.
    This may result in finding new classes of solvable groups with a decidable Subgroup Membership problem (cf.~\cite[Chapter~9]{LennoxRobinson2004}).
    Nevertheless, note that there are finitely presented 3-step solvable groups with an undecidable word problem (and hence an undecidable Subgroup Membership problem), by a result of Kharlampovich~\cite{kharlampovich1981finitely}.
    \item In parallel, one can work on replacing $\Z^n$ (and $\N^n$) with a non-commutative group (and its submonoid). This can be compared to the generalization from classical Gr\"{o}bner basis to non-commutative Gr\"{o}bner basis~\cite{Mora1994}.
    Natural candidates for this replacement are polycyclic groups and free groups.
    For free groups, Lohrey, Steinberg and Zetzsche~\cite{lohrey2015rational} showed that Rational Subset Membership is decidable in $G \wr F$, where $G$ is finite and $F$ is (virtually) free.
    It would be interesting to explore the connections between their techniques and the ones developed in this paper.
    \item It is also natural to consider other algorithmic problems in $G \wr \Z^n$ with finite $G$.
    Two prominent problems are \emph{Subgroup Intersection} (whether two finitely generated subgroups have trivial intersection) and \emph{Submonoid Membership} (whether a given element is in a given finitely generated submonoid).
    For Subgroup Intersection, Baumslag, Miller and Ostheimer~\cite{baumslag2010subgroups} showed its decidability in wreath products $G \wr \Z^n$ with finitely generated abelian $G$.
    To extend their result to finite non-abelian $G$ likely requires generalizing algorithms for \emph{ideal intersection}~\cite[Chapter~4.3]{CoxLittleOShea2015} to the stable subgroup setting.
    For Submonoid Membership, Dong~\cite{dong2025lamplighter} showed its decidability in $G \wr \Z^n$ where $G = \Z/p$ for prime $p$.
    The main argument in~\cite{dong2025lamplighter} is a reduction to \emph{S-unit equations} over fields of positive characteristic~\cite{adamczewski2012vanishing, derksen2012linear}.
    This result has recently been generalized to $G = \Z/(p^a q^b)$ with prime $p, q$~\cite{DongShafrir2026}.
    It would be interesting to consider the case where $G$ is a non-abelian finite $p$-group, and attempt to generalize the S-unit equation approach into non-abelian settings.
    Finally, we recall that by a result of Lohrey and Steinberg~\cite{lohrey2011tilings}, \emph{Rational Subset Membership} is undecidable in $G \wr \Z^n$ for non-trivial $G$ and $n \geq 2$.
\end{enumerate}

\section*{Acknowledgements} The author is supported by a Fellowship by Examination at Magdalen College, Oxford.
%Remove for submission

\bibliography{lampmember}

\end{document}